\documentclass{amsart}
\usepackage[mathscr]{eucal}
\usepackage{todonotes}
\usepackage{tikz}
\usepackage{xifthen} 
\usetikzlibrary{positioning,calc}
\usepackage[utf8]{inputenc}
\usepackage{amssymb,enumitem}
\usepackage{doi}
\hypersetup{hidelinks}
\usepackage{hyperref}
\usepackage[english]{babel}
\numberwithin{equation}{section}

\usepackage[final]{changes}

\setcommentmarkup{}

\theoremstyle{plain}
\newtheorem{thm}{Theorem}[section]
\newtheorem{prop}[thm]{Proposition}

\newtheorem{cor}[thm]{Corollary}
\newtheorem{claim}{Claim}[section]

\newtheorem{lem}[thm]{Lemma}
\newtheorem{conj}[thm]{Conjecture}
\theoremstyle{definition}
\newtheorem{defn}[thm]{Definition}

\newtheorem{exa}[thm]{Example}

\newtheorem{rem}[thm]{Remark}

\newenvironment{pf}{\begin{proof}}{\end{proof}}

\newcommand{\R}{\mathbb R}
\newcommand{\Q}{\mathbb{Q}}
\newcommand{\N}{\mathbb N}
\newcommand{\Z}{\mathbb{Z}}

\newcommand{\Rexp}{\mathbb{R}_{\exp}}
\newcommand{\Rrexp}{\mathbb{R}_{\texp}}
\newcommand{\Twsc}{T_{\textnormal{wsc}}}

\DeclareMathOperator{\img}{Im}

\DeclareMathOperator{\spn}{span}
\DeclareMathOperator{\td}{td}
\DeclareMathOperator{\etd}{etd}
\DeclareMathOperator{\ecl}{ecl}

\DeclareMathOperator{\ldim}{ldim}

\DeclareMathOperator{\germ}{germ}

\newcommand{\liff}{\leftrightarrow}
\newcommand{\nin}{\not\in}

\DeclareMathOperator{\reg}{reg}

\DeclareMathOperator{\texp}{e} 

\DeclareMathOperator{\Exp}{Exp}
\newcommand{\E}{\texp} 

\newcommand{\eps}{\varepsilon}

\newcommand{\SC}{\text{SC}_{\R}}

\newcommand{\Tres}{T^{dc}_{\texp}}
\newcommand{\Tglob}{T^{dc}_{\exp}}

\DeclareMathOperator{\Fin}{Fin}

\newcommand{\M}{M}

\newcommand{\res}{\operatorname{res}}
\newcommand{\rs}[1]{\underline{#1}}
\newcommand{\st}{\operatorname{st}} 
\newcommand{\rf}{A/\mathfrak{m}_A}

\DeclareMathOperator{\dom}{dom}

\newcounter{paranum}[section]
\renewcommand{\theparanum}{\thesection.\arabic{paranum}}

\newcommand{\numpar}[1][]{%
    \refstepcounter{paranum}
\textbf\textnormal{(\theparanum)}
    \ifthenelse{\isempty{#1}}
        {\hspace{0.5em}}
        {\ \textnormal{#1}.}
}

\title{On the elementary theory of the real exponential field}

\author{Alessandro Berarducci}
\address[A. Berarducci]{Dipartimento di Matematica, Università di Pisa, Largo Bruno Pontecorvo 5, 56127 Pisa, Italy}
\email{alessandro.berarducci@unipi.it}
\author{Francesco Gallinaro}
\address[F. Gallinaro]{Centro di Ricerca Matematica Ennio De Giorgi, Scuola Normale Superiore, Piazza dei Cavalieri 3, 56126 Pisa, Italy}
\email{francesco.gallinaro@sns.it}

\date{\replaced{23 June 2026}{9 March 2026}}
\thanks{Partially supported by the Italian research project PRIN 2022, Models, sets and classifications, Prot.\,2022TECZJA.\\ \emph{Statements and declarations}: The authors have no competing interests to declare.}

\subjclass[2020]{03C10, 03C64, 12L12}
\keywords{Exponentiation, model completeness, definable completeness, o-minimality}

\begin{document}

\begin{abstract}
    Assuming Schanuel's conjecture, we prove that the complete theory $T_{\exp}$ of the real exponential field is axiomatized by the axioms of definably complete exponential fields satisfying $\exp' = \exp$. This implies the result of Macintyre and Wilkie that, under the same conjecture, $T_{\exp}$ is decidable. Our approach is based on the model completeness of a similar set of axioms for the exponential function restricted to $(-1,1)$, which we prove unconditionally. 
\end{abstract}

	\maketitle

	\begin{minipage}{0.9\textwidth}
		\tableofcontents
	\end{minipage}
	\vskip1\baselineskip

\section{Introduction}
In \cite{Tarski1951}, Tarski proved that the (first-order) theory of the field of real numbers $\R$ admits quantifier elimination and is decidable, that is, there exists an algorithm to determine whether a first-order sentence $\varphi$ in the language $L=\{\leq, 0, 1, +, \cdot\}$ of ordered rings is true in $\mathbb{R}$. A modern treatment can be found in \cite[Theorem~6.41]{Poizat2000}, including a proof that the theory of $\R$ is axiomatized by the axioms of real closed ordered fields. 

In the same paper, Tarski asked whether the decidability result could be extended if one adds exponentiation to the language, considering the real exponential field $\R_{\exp} = (\R, \exp)$. A first obstacle is that the complete theory $T_{\exp}$ of $\mathbb{R}_{\exp}$ does not admit quantifier elimination due to a counterexample of Osgood \cite{Osgood1915} (see also \cite[Equation (1.1)]{Gabrielov2007}). Nevertheless, in \cite{Wilkie1996}, Wilkie proved that $T_{\exp}$ is {\em model complete}: every formula is equivalent to an existential formula (a quantifier-free formula preceded by a block of existential quantifiers). 
In geometric terms, this means that every definable set in the structure $\R_{\exp}$ is a projection of a set defined by equations of the form $p(x_1, \ldots, x_n, e^{x_1}, \ldots, e^{x_n}) = 0$, where $p$ is a polynomial. The sets defined by such equations have a finite number of connected components by \cite{Khovanskii1981} (see also \cite[Theorem 5.3]{Wilkie1996}), hence the same property holds for every definable set in $\R_{\exp}$. In particular, $\R_{\exp}$ is {\em o-minimal}: every subset of $\R$ definable in $\R_{\exp}$ is a finite union of intervals. Definable sets in o-minimal structures share many of the finiteness properties of semialgebraic sets, see \cite{vdDries1998} for a systematic treatment. Wilkie's results on $\mathbb{R}_{\exp}$ have been a driving motivation for the study of o-minimality. 

In \cite{Macintyre1996a} Macintyre and Wilkie proved the decidability of the theory of $\R_{\exp}$ assuming the real form of {\em Schanuel's conjecture} ($\SC$ for short): given a $n$-tuple of $\Q$-linearly independent real numbers and their exponentials, the resulting $2n$-tuple has transcendence degree at least $n$ over $\Q$. More precisely, Macintyre and Wilkie show that, if $\SC$ holds, then there is an algorithm to test whether an exponential polynomial $p(x_1, \ldots, x_n, e^{x_1}, \ldots, e^{x_n})$ over $\Q$ has a real zero. They then prove that the existence of such an an algorithm implies the decidability of $T_{\exp}$. 

Here we prove, assuming $\SC$, that $T_{\exp}$ \deleted{can be} \added{is} axiomatized by the \added{theory} \deleted{axioms} $\Tglob$ of \emph{definably complete} exponential fields satisfying $\exp' = \exp$ (Theorem \ref{thm:complete-unrestricted}). This gives a positive answer, modulo $\SC$, to a conjecture considered in \cite[Section 5]{Miller1996}, \cite[Theorem 2.8]{Berarducci2004c}, \cite[Theorem 4.7.25]{ServiTesi2008}, \cite{Krapp2023}. 

We recall that an ordered structure is definably complete if every bounded definable subset of its domain has a least upper bound (a first order version of Dedekind completeness). An ordered field is real closed if and only if it is definably complete, hence $\Tglob$ is a natural extension of the theory of real closed fields. 
Since a complete theory is decidable if and only if it is recursively axiomatized, we also obtain a new proof of the decidability result of \cite{Macintyre1996a}. 

Towards the proof that $\SC$ implies the completeness of $\Tglob$, in Theorem \ref{thm:model-complete} we establish unconditionally the model completeness of a set of axioms $\Tres$ (very similar to $\Tglob$) \added{satisfied by the real field equipped with the exponential function restricted to $(-1,1)$.}\deleted{for the exponential function restricted to $(-1,1)$.} The models of $\Tres$ are the definably complete structures \added{$M$} expanding an ordered field \deleted{equipped with} \added{with a function $\texp:M\to M$ which satisfies $\texp(0)=1, \texp'(x) = \texp(x)$ for $x\in (-1,1)$ and $\texp(x)=0$ for $x\nin (-1,1)$.} \deleted{an exponential function $\texp$ restricted to $(-1,1)$ and satisfying $\texp' = \texp$ on that interval.} 

Granted the model completeness of $\Tres$, assuming $\SC$ we deduce the completeness of $\Tres$ (Theorem \ref{thm:complete-restricted}) by showing that the prime submodel of $\R_{\texp}$ embeds in every models of $\Tres$ (in Remark \ref{rem:Krapp} we relate this to a similar embedding result of Krapp). 
The completeness of $\Tglob$ (Theorem \ref{thm:complete-unrestricted}) in turn follows by a result of Ressayre \cite{Ressayre1993}.  

In the last section we strengthen the embedding result above by showing \added{that if $N$ is a model of $\Tres$ and $A$ is a convex subring of $N$ which contains an elementary substructure of $N$, then the residue field of $A$, with its induced restricted exponential function, embeds into $N$}\deleted{, under $\SC$ (in fact, under a weaker hypothesis), that the residue field of a convex subring of a model $M$ of $\Tres$ can be embedded in $M$ preserving the restricted exponential function} (Theorem \ref{thm:embedding}). 

To prove the model completeness of $\Tres$ we work by induction on the complexity of formulas as in Wilkie's proof of model completeness of expansions of $\mathbb{R}$ by restricted Pfaffian functions \cite{Wilkie1996}. 
As in that paper, a crucial step consists in bounding the norm of regular solutions, here called Khovanskii points, to certain $n \times n$ systems of equations. Our context is different because we cannot assume that $\Tres$ is a complete theory, but we can take advantage of the fact that models of $\Tres$ are o-minimal by the results in \cite{FornasieroServi2010,Hieronymi2011}. 

A major difficulty, however, is that we are not able to prove that $\Tres$ is a polynomially bounded theory (i.e., in every model, every definable function in one variable is ultimately bounded by a polynomial), so we have to depart from the approach in \cite{Wilkie1996} which uses in an essential way 
\added{the fact that for every univariate function $g$ definable in an expansion of $\mathbb{R}$ by restricted analytic functions (see \cite{VandenDries1986,Denef1988}), if $g$ is not eventually zero, then there is a rational number $q$ and a non-zero constant $c$ such that $g(x)/cx^q$ tends to $1$ for $x\to +\infty$ \cite[Corollary 3.5]{Wilkie1996}. } \deleted{consequences of the polynomial boundedness of the theory $T_{\text{an}}$ studied in  \cite{VandenDries1986,Denef1988}, see in particular }

In our approach, given a model $M$ of $\Tres$, we consider the residue field $R = \mathcal O/\mathfrak m$ of the local ring $\mathcal O$ of germs at $+\infty$ of polynomially bounded definable functions $f:M\to M$. We show that $R$ has an induced restricted exponential function and a compatible derivation. We then apply Ax's functional transcendence theorem in \cite{Ax1971} to prove that, if for a contradiction there is an unbounded Khovanskii point, then two specific germs of polynomially bounded definable functions are not equal modulo the maximal ideal $\mathfrak m$. Using the inductive hypothesis, this allows us to reach another contradiction and establish the boundedness of Khovanskii points. 

 We have seen that, under $\SC$, the theory $\Tres$ coincides with the complete theory of $\R_{\texp}$, so it is polynomially bounded. The role of $\SC$ is precisely to ensure that any model of $\Tres$ has an archimedean submodel. An unconditional proof of polynomial boundedness remains an open problem even after the model completeness has been established.     

 \added{In \cite[\S 5]{Macintyre1996a} the authors show that the decidability of $T_{\exp}$ is equivalent to WSC, a weak form of Schanuel's conjecture which can be expressed by a first-order axiom scheme $\Twsc$ (Definition \ref{def:Twsc}). In the same spirit we note in Remark \ref{rem:Krapp}(\ref{rem:WSC}) that under WSC the theories $\Tres+ \Twsc$ and $\Tglob+\Twsc$ are complete.} 

A large body of papers deal with tameness properties of semianalytic or subanalytic sets \cite{Lojasiewicz1971,VandenDries1986,Denef1988,Gabrielov1968} that can be used to study o-minimal expansions of the reals. These approaches require tools, such as the Weierstrass preparation theorem, that are not available in the non-archimedean context. Other papers, such as \cite{Jones2008,LeGal2008}, assume forms of local polynomial boundedness as a hypothesis. Possible approaches to the polynomial boundedness of $\Tres$ may come from Miller's dychotomy results \cite{Miller1994,Miller1996} or attempts to extend the results of \cite{Rolin2015} to the non-archimedean context, but this seems rather challenging. 

\subsection{Organization of the paper} In Sections \ref{sec:preliminaries} and \ref{sec:ordered} we give some model-theoretic preliminaries. In Sections \ref{sec:exp-fields} and \ref{sec:conv-val-rings} we give the definitions of exponential field, restricted exponential field, and partial exponential field.
 We show how to extend a restricted exponential function to a partial exponential function and how to induce restricted and partial exponential functions on the residue field of a convex subring of a model of $\Tres$. 
 
 In Section \ref{sec:tan-spaces} we review some basic results about differentiable definable functions and definable manifolds in the o-minimal setting. This will be useful  in connection with the definition of Khovanskii point in Section \ref{sec:res-exp-pol}. In Section \ref{sec:exp-alg} we use results of Kirby to relate Khovanskii points to a notion of exponential algebraicity.
 
In Section \ref{sec:germs} we consider the local ring $\mathcal O$ of germs at $+\infty$ of polynomially bounded definable functions in a model of $\Tres$, and equip its residue field $R$ with a restricted exponential function and a compatible derivation. In Section \ref{sec:Ax} we give a uniform version of Ax's theorem for real closed differential fields. 
In Sections \ref{sec:bounded} and \ref{sec:model-complete} we prove the model completeness of $\Tres$.

In Section \ref{sec:lifting} we give an argument to lift Khovanskii points over the residue field of a convex subring of a model of $\Tres$. This argument is used in Section \ref{sec:completeness} to prove the completeness of $\Tres$ assuming $\SC$. In Section \ref{sec:embedding} we prove the embedding theorem for the residue restricted exponential field mentioned above.

\subsection*{Acknowledgements} A version of Theorem \ref{thm:embedding} and various preliminary results thereof, appears in unpublished work by the first author and Marcello Mamino. We thank Mamino for allowing us to include the result in this paper.  A special case of Theorem \ref{thm:embedding} is used to deduce the completeness of $\Tres$ from its model completeness, assuming $\SC$. 

The first author thanks Antongiulio Fornasiero for numerous discussions on the topics of this paper and for sharing some insights that could be used in future attempts to prove the polynomial boundedness of $\Tres$. 

\added{We thank Kobi Peterzil for a very careful reading of a preliminary version of this paper and many helpful comments.}  

\section{Model-theoretic preliminaries}\label{sec:preliminaries}
We assume familiarity with the model-theoretic notions of a {\em structure} and a (first-order) {\em formula} in a given {\em language}~$L$. A {\em sentence} is a formula without free variables. Given a language $L$, we write $M\models \varphi$ to express the fact that $\varphi$ is an $L$-sentence, $M$ is an $L$-structure, and $\varphi$ holds in $M$.  A comprehensive introduction to model theory can be found in~\cite{Hodges}.

A {\em theory} is a collection of sentences in a given language. Given a theory $T$ and a structure $M$ in the same language, we say that $M$ is a {\em model} of $T$ if $M \models \varphi$ for every $\varphi \in T$. We write in this case $M\models T$. 

We write $T\vdash \varphi$ if the formula $\varphi$ is {\em provable} in the theory $T$, or equivalently (by Gödel's completeness theorem), $\varphi$ holds in every model of $T$. 
A theory $T$ is {\em complete} if, for every sentence $\varphi$ in its language, either $\varphi$ or its negation $\lnot \varphi$ is provable in $T$, but not both. 

Given a structure $M$ in a language $L$, the theory $\text{Th}(M)$ of $M$ is the collection of all sentences of $L$ that are true in $M$. Clearly $\text{Th}(M)$ is a complete theory. 

Two structures $M$ and $N$ are {\em elementarily equivalent}, written $M\equiv N$, if they have the same complete theory. The theory $\textsf{RCF}$ of real closed fields (in the language $L = \{\leq,0,1,+,\cdot\}$) is complete, so its models are elementarily equivalent to the ordered field $\R$ of real numbers \cite{Tarski1951}.  

If $\varphi$ is a formula with free variables included in $\{x_1, \ldots, x_n\}$, after fixing an ordering of the variables, we may write $\varphi$ in the form $\varphi(x_1, \ldots, x_n)$. In this case, if $(a_1, \ldots, a_n)\in M^n$, we write $M\models \varphi(a_1, \ldots, a_n)$ if $\varphi$ holds in $M$ when we assign to the variables $x_1, \ldots, x_n$ the values $a_1, \ldots, a_n$ respectively. A set $X \subseteq M^n$ is {\em definable} if there are a formula $\varphi(x_1, \ldots, x_n, y_1, \ldots, y_k)$ and parameters $b_1, \ldots, b_k$ such that $X = \{(a_1, \ldots, a_n)\in M^n : M \models \varphi(a_1, \ldots, a_n, b_1, \ldots, b_k)\}$. 
 We say in this case that $X$ is defined by the {\em formula with parameters}  $\phi(x_1,\ldots, x_n) = \varphi(x_1, \ldots, x_n, b_1, \ldots, b_k)$ and we write $X = \phi(M)$. For instance a circle of radius $r$ is defined in $\R$ by the formula $x_1^2+x_2^2=r^2$, with parameter $r$. 

An {\em expansion} of a structure $\mathscr M$ is a structure $\mathscr{M}'$ with the same domain and a possibly larger language $L'\supseteq L$ in which the symbols of $L$ are interpreted as in $\mathscr M$. For example an ordered ring $\mathscr{M}' = (M, \leq, 0,1,+,\cdot)$ is an expansion of its underlying ordered additive group $\mathscr{M} = (M, \leq, 0,+)$. If $\mathscr M'$ is an expansion of $\mathscr M$, we say that $\mathscr M$ is a {\em reduct} of $\mathscr M'$. When the language is understood from the context, we use the same notation for a structure and its underlying domain. Thus, in the case of an ordered ring, we may write $M$ instead of $(M,\leq,0,1,+,\cdot)$. 

Expansions should not be confused with extensions. We say that a structure $M$ is an {\em extension} of a structure $M$, written $M \supseteq N$, if $M$ and $N$ are structures in the same language and $M$ is a {\em substructure} of $N$.

If $N \supseteq M$ is an extension of $M$ and $X = \phi(M) \subseteq M^n$ is a definable set in $M$, we write $\phi(N)$ for the set defined in $N$ by the same formula $\phi$ (where $\phi$ may contain parameters from $M$). If $\phi$ is quantifier-free, then $\phi(M) = \phi(N) \cap M^n$, but in general this equality fails. For instance if $\phi(x)$ is the formula $\exists y (x = y^2)$, then $\phi(\Q) \neq \phi(\R) \cap \Q$. 

We say that $M$ is an {\em elementary substructure} of $N$, written $M\preceq N$, if $\phi(M) = \phi(N) \cap M^n$ for every formula $\phi = \varphi(x_1, \ldots, x_n)$ with parameters from $M$. Note that $M \preceq N$ implies $M\equiv N$.   

A theory $T$ is {\em model complete} if and only if, for any pair of models $M \subseteq N$ of $T$, we have $M\preceq N$. Model completeness can also be characterized by the property that every formula $\varphi(\bar x)$ in the language of $T$, where $\bar x$ is a finite tuple of variables, is equivalent, in all models of $T$, to an existential formula $\exists \bar y \theta(\bar x, \bar y)$ (with $\theta$ quantifier free).
This is also equivalent to the fact that, for any pair of models $M \subseteq N$ of $T$, 
$M$ is {\em existentially closed} in $N$, namely for every existential formula $\exists \bar x \theta(\bar x, \bar a)$ with parameters $\bar a$ from $M$, we have $M \models \exists \bar x \theta(\bar x, \bar a) \iff N \models \exists \bar x \theta(\bar x, \bar a)$, where $\bar x$ is a finite tuple of variables. A proof of these equivalences can be found in \cite[Theorem 1.2]{wilkieLecturesEliminationTheory2015} or in \cite[Theorem 8.3.1]{Hodges}. 

The theory $\textsf{RCF}$ of real closed fields is model complete, since it has the stronger property of admitting {\em quantifier elimination} \cite{Tarski1951}, see \cite[Theorem 6.41]{Poizat2000} for a modern treatment.

\section{Definable completeness and o-minimality} \label{sec:ordered}

If $M = (M,\leq, \ldots)$ is an expansion of an ordered structure, we put on $M$ the topology generated by the open intervals, and on $M^n$ the product topology. If $M$ is an expansion of a non-archimedean ordered field, this topology is neither locally compact nor locally connected. This is the typical situation we shall encounter in this paper. 

Given an $L$-structure $\mathscr K = (K, \leq, \ldots)$ which expands a dense totally ordered set $(K,\leq)$ without end-points,  we say that $\mathscr K$ is {\em definably complete} if every non-empty definable subset of $K$ that is bounded from above has a least upper bound in $K$. We recall that $\mathscr K$ is {\em o-minimal} if every definable subset of $K$ is already definable in the reduct $(K, \leq)$. Equivalently, $\mathscr K$ is o-minimal if every definable subset of $K$ is a finite union of intervals. 
We assume some familiarity with the basic results on o-minimal structures, including the definition of {\em dimension} of a definable set and the {\em cell decomposition} theorem. A standard reference is \cite{vdDries1998}. Given an o-minimal structure $M$, or more generally a definably complete structure, a definable subset of $M^n$ is called {\em definably compact} if it is closed and bounded, and {\em definably connected} if it is not the union of two non-empty definable clopen subsets. 
Unlike o-minimality, the notion of definable completeness is given by a set of (first-order) axioms, in fact an axiom scheme.

In general o-minimality implies definable completeness, but the converse is not always true. For instance $(\R, \leq, 0,1,+,\cdot, \sin)$ is not o-minimal (because the zero set of $\sin$ is not a finite union of intervals), but it is definably complete (because every bounded subset of $\R$ has a supremum, be it definable or not). 

Many classical results of the differential calculus extend to definably complete expansions of an ordered field, for instance we have the following form of the Taylor formula with Lagrange form of the remainder. 

\begin{prop}[{\cite[Theorem 5]{Servi2008}}] \label{prop:Taylor} Let $M$ be a definably complete expansion of an ordered field and let $f:(a,b) \to M$ be a definable function of class $C^n$. Then for every $x, x_0\in (a,b)$ 
\[
f(x)
= \sum_{k=0}^{n-1} \frac{f^{(k)}(x_0)}{k!}\,(x - x_0)^k
  + \frac{f^{(n)}(\xi)}{n!}\,(x - x_0)^n
\]
for some $\xi$ in the interval between $x$ and $x_0$. 
\end{prop}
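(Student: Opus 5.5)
The plan is to reduce to Rolle's theorem for definable functions on a definably complete ordered field, exactly as in the classical proof. First I will establish the definable versions of the basic calculus tools. A definable continuous function on a closed bounded interval $[c,d]$ attains its maximum and minimum. To see this, consider $\sup f([c,d])$, which exists by definable completeness once boundedness is known. Boundedness and attainment both follow from the standard ``sup of the good set'' argument: the set $\{t\in[c,d] : f \text{ is bounded on } [c,t]\}$ is definable, so it has a supremum, and continuity forces this supremum to equal $d$. Similarly, if the maximum $s$ were not attained, then $1/(s-f)$ would be a definable continuous unbounded function, contradicting boundedness.

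From this I get Rolle's theorem. Let $g$ be definable, continuous on $[c,d]$, differentiable on $(c,d)$, with $g(c)=g(d)$. Then $g$ attains an extremum at an interior point, or else $g$ is constant. At an interior extremum the derivative vanishes, by the usual sign argument on difference quotients, which is purely order-theoretic.

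Now fix $x\neq x_0$; the case $x=x_0$ is trivial. Let $P(t)=\sum_{k=0}^{n-1} f^{(k)}(x_0)(t-x_0)^k/k!$ and choose the constant $K\in M$ so that
\[ f(x)=P(x)+\frac{K}{n!}(x-x_0)^n. \]
Then define the definable function
\[ h(t)=f(t)-P(t)-\frac{K}{n!}(t-x_0)^n \]
on $(a,b)$. It is of class $C^n$, and it satisfies $h^{(k)}(x_0)=0$ for $k<n$ and $h(x)=0$. Applying Rolle to $h$ between $x_0$ and $x$ gives a point $\xi_1$ strictly between them with $h'(\xi_1)=0$. Applying Rolle to $h'$ between $x_0$ and $\xi_1$ gives $\xi_2$, and so on. After $n$ steps we obtain $\xi=\xi_n$ strictly between $x_0$ and $x$ with $h^{(n)}(\xi)=0$, that is, $f^{(n)}(\xi)=K$, which is the claimed formula. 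Each $h^{(k)}$ is definable, being a derivative of a definable function, so Rolle applies at every stage.

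The main obstacle is the extreme value theorem in the non-archimedean, non-locally-compact setting, since Heine–Borel arguments are unavailable. I would handle it purely with suprema of definable sets, as sketched above. The key step is showing that the supremum $t^*$ of the good set $\{t : f \text{ bounded on } [c,t]\}$ is $d$ and belongs to the set. This uses continuity at $t^*$: $f$ is bounded on a neighbourhood $(t^*-\delta,t^*+\delta)$, and combining this with boundedness on $[c,t^*-\delta/2]$ extends the good set past $t^*$ unless $t^*=d$, and also shows $d$ itself is good.
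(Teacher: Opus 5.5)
Your proof is correct. The definable extreme value theorem, proved with suprema of definable sets, gives Rolle's theorem, and iterating Rolle on $h$ yields the Lagrange remainder; definable completeness is used only in the extreme value step.

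The paper gives no argument of its own and simply cites \cite[Theorem 5]{Servi2008}. As far as I can tell, that source follows this same classical route: the calculus basics are established in definably complete structures and Taylor's formula is then derived from Rolle in the standard way.
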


\section{Exponential fields}\label{sec:exp-fields}

Let $\R$ be the field of real numbers. We consider $\R$ as a first-order structure in the language $L = \{\leq,0,1,+,\cdot\}$ (with the natural interpretation of the symbols) and we use the same notation for $\R$ and its domain. 

\begin{defn}
    Let $\Rexp = (\R,\exp)$ be the expansion of $\R$ with the exponential function $\exp(x) = e^x$ and let $\Rrexp = (\R, \texp)$ be the expansion of $\R$ with the function $\texp:\R\to \R$ defined by 

\[
\texp(x)=
\begin{cases}
\exp(x) & \text{for } x\in (-1,1),\\
0       & \text{for } x\nin (-1,1).
\end{cases}
\]
\comment{Changed $\in$ with $\nin$ in the definition of $\texp$.}
We call $\texp$ the restricted exponential function. We consider $\Rexp$ and $\Rrexp$ as structures in the languages $ \{\leq, 0,1,+,\cdot, \exp \}$ and  $ \{\leq, 0,1,+,\cdot, \texp \}$ respectively. Let $T_{\exp}$ be the complete theory of $\Rexp$ and $T_{\texp}$ be the complete theory of $\Rrexp$. 
\end{defn}

We shall consider recursive subtheories $\Tglob \subseteq T_{\exp}$ and $\Tres\subseteq T_{\texp}$ axiomatized by the scheme of definable completeness plus the natural differential equations for $\exp$ and $\texp$ respectively. We need a couple of preliminary observations. 

\begin{prop}\label{prop:Fratarcangeli}
    Let $M$ be a definably complete expansion of an ordered field and let $f:M\to M$ be a definable function. Suppose that $f'(x) = f(x)$ for all $x$ in an open interval $(-a,a)$ and that $f(0)=1$. Then 
    $$f(x+y) = f(x)f(y)$$ whenever $x,y,x+y\in (-a,a)$. 
\end{prop}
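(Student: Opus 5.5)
Fix $y\in(-a,a)$ and work on the open interval $I_y=\{x : x\in(-a,a),\ x+y\in(-a,a)\}$. This set is an open interval containing $0$, since it is the intersection of $(-a,a)$ and $(-a-y,a-y)$. On $I_y$ consider the definable function
\[
g(x) = f(x+y)\,f(-x).
\]
For $x\in I_y$ we have $-x\in(-a,a)$, because $(-a,a)$ is symmetric. Hence $f$ is differentiable at $x+y$ and at $-x$. Using $f'=f$ on $(-a,a)$ and the chain rule, we compute
\[
g'(x) = f(x+y)f(-x) - f(x+y)f(-x) = 0.
\]
So $g$ has zero derivative on $I_y$. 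In a definably complete expansion of an ordered field, a definable function with zero derivative on an interval is constant; this is the mean value theorem, i.e.\ the case $n=1$ of Proposition \ref{prop:Taylor}. Therefore $g(x)=g(0)=f(y)f(0)=f(y)$ for all $x\in I_y$.

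Next I show $f(x)f(-x)=1$ for $x\in(-a,a)$. The case $y=0$ above gives $I_0=(-a,a)$ and
\[
f(x)f(-x)=f(0)=1 \quad\text{for } x\in(-a,a).
\]
In particular $f(-x)\neq 0$ and $f(x) = 1/f(-x)$.

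Now take $x,y$ with $x,y,x+y\in(-a,a)$, so $x\in I_y$. The first step gives $f(x+y)f(-x)=f(y)$. Multiplying both sides by $f(x)$ and using $f(x)f(-x)=1$ yields
\[
f(x+y)=f(x)f(y).
\]

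The only point needing care is the constancy claim, which requires differentiability of $g$ on an open interval where both $x+y$ and $-x$ stay inside $(-a,a)$. This is why I use $-x$ rather than, say, $y-x$. Once the domain $I_y$ is chosen correctly, the mean value theorem in the definably complete setting (Proposition \ref{prop:Taylor} with $n=1$) finishes the argument.
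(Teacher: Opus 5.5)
Your proof is correct, and it takes a different route from the paper's.

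The paper uses $g_y(x)=f(x+y)/f(y)$ on the same interval $I_y$. It checks that $g_y'=g_y$ and $g_y(0)=1$, and then cites a uniqueness theorem for solutions of $Y'=Y$ in definably complete fields (Fratarcangeli, based on Otero) to conclude $g_y=f$ on $I_y$.

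You instead use the classical observation that $f(x+y)f(-x)$ has zero derivative, so only the mean value theorem is needed. Proposition \ref{prop:Taylor} with $n=1$ applies because $g'\equiv 0$ makes $g$ of class $C^1$.

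Your version stays within tools the paper has already stated. It also yields $f(x)f(-x)=1$, hence $f\neq 0$ on $(-a,a)$. The paper's argument needs this tacitly in order to divide by $f(y)$; there it would follow from ODE uniqueness applied at a hypothetical zero, but this is not written out.

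The paper's version is shorter given the citation. It also does not use the symmetry of $(-a,a)$, whereas your argument needs $-x\in(-a,a)$. Instead it rests on a general uniqueness principle for $Y'=Y$.
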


\begin{proof} 
For $y \in (-a,a)$, consider the definable function $g_y$ defined on $I_y = (-a,a) \cap (-a-y,a-y)$ by $g_y(x) := f(x+y)/f(y)$. Note that if $x$ and $y$ are as in the statement then $x$ lies in the domain of $g_y$. An easy computation shows that $g_y' = g_y$ on $I_y$, so $f$ and $g_y$ are two solutions on $I_y$ of the differential equation $Y' = Y$, both taking the value $1$ at $x=0$. 
By \cite[Proposition 2.7]{Fratarcangeli2008} (based on \cite[Theorem 2.3]{Otero1996}) it follows that $g_y = f$ on $I_y$, hence $f(x+y)/f(y) = g_y(x) = f(x)$ when $x,y,x+y\in (-a,a)$ and the desired result follows.  
\end{proof}

The following converse also holds. 

\begin{prop}\label{prop:functional-to-differential}
    Let $M$ be a definably complete expansion of an ordered field and let $f:M\to M$ be a definable monotone function. Suppose that $f(0) = 1$ and $f(x+y) = f(x)f(y)$ whenever $x,y,x+y\in (-a,a)$. Then $f$ is differentiable on $(-a,a)$ and $$f'(x) = f'(0)f(x)$$ for all $x\in (-a,a)$. 
\end{prop}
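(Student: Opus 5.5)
Because $f(x+h)-f(x) = f(x)\bigl(f(h)-1\bigr)$ for small $h$, the whole statement reduces to differentiability of $f$ at $0$. Indeed, if $f'(0)$ exists, then for fixed $x\in(-a,a)$ and $h$ small enough that $h, x+h\in(-a,a)$,
\[
\frac{f(x+h)-f(x)}{h} = f(x)\,\frac{f(h)-1}{h} \longrightarrow f(x)f'(0).
\]

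First I will record some basic properties of $f$ near $0$.

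From $f(x)=f(x/2)^2$ we get $f\ge 0$ on $(-a,a)$. If $f(x)=0$ for some $x$ in that interval, then $1=f(0)=f(x)f(-x)=0$, a contradiction, so $f>0$ there. Replacing $f$ by $1/f$, which corresponds to $x\mapsto -x$, we may assume $f$ is non-decreasing. If $f$ is constant near $0$, then $f\equiv 1$ on $(-a,a)$ by the functional equation, and the claim holds with $f'(0)=0$. Otherwise $f$ is strictly increasing on $(-a,a)$: if $f(x)=f(y)$ with $x<y$, then $f(y-x)=1$, and iterating halving and doubling gives constancy.

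The main step is existence of $f'(0)$. Definable completeness gives that the one-sided Dini-type quantities, such as
\[
D^+ = \limsup_{h\to 0^+}\frac{f(h)-1}{h}, \qquad D_+ = \liminf_{h\to 0^+}\frac{f(h)-1}{h},
\]
exist in $M\cup\{\pm\infty\}$, since they are sups and infs of definable sets. I expect to prove the following.

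(i) Monotone definable functions in a definably complete field are differentiable at some point. This is the definable Lebesgue theorem; I would cite the known result of Fornasiero–Hieronymi/Servi that a definable monotone function on an interval is differentiable outside a nowhere dense set. Alternatively I would prove it directly, which I expect to be the main obstacle: one must rule out $D^+=+\infty$ everywhere and $D_+<D^+$ everywhere using a definable Vitali-type covering argument, which in the non-archimedean setting must be replaced by definable Baire category arguments.

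(ii) Translation invariance transfers differentiability. If $f$ is differentiable at some $x_0\in(-a,a)$, then for small $h$
\[
\frac{f(h)-1}{h}=\frac{1}{f(x_0)}\cdot\frac{f(x_0+h)-f(x_0)}{h},
\]
so $f'(0)$ exists and equals $f'(x_0)/f(x_0)$.

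Combining (i) and (ii) gives $f'(0)$, and the opening computation then finishes the proof. If citing the Lebesgue-type theorem is deemed acceptable, (i) is immediate and the rest is routine algebra.
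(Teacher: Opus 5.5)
Your proposal is correct and is essentially the paper's argument: invoke the Fornasiero--Hieronymi theorem to get differentiability of the monotone definable $f$ at some point $x_0\in(-a,a)$, transfer it to $0$ and then to every $x\in(-a,a)$ via $f(x+h)-f(x)=f(x)\bigl(f(h)-1\bigr)$, and read off $f'(x)=f'(0)f(x)$. Your check that $f>0$ on $(-a,a)$, needed to divide by $f(x_0)$, is a detail the paper leaves implicit. Two small remarks: the result the paper cites (Theorem B of Fornasiero--Hieronymi) gives differentiability on a dense set rather than off a nowhere dense set, which is all you use; and your reductions to the strictly increasing case are superfluous (the ``iterate halving and doubling'' step would in any case need a definable-completeness argument in a non-archimedean field).
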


\begin{proof}
   Every definable monotone function $f:M\to M$ in a definably complete expansion $M$ of a ordered field is continuous almost everywhere \cite[Lemma 6.6]{FornasieroHieronymi2015} and it is differentiable on a dense subset \cite[Theorem B]{FornasieroHieronymi2015}. If $f:M\to M$ satisfies our hypothesis, continuity at one point implies continuity everywhere, and similarly for differentiability. Granted this, we have
   $$f'(x) = \lim_{\eps \to 
   0} \frac{f(x+\eps)-f(x)}{\eps}=\lim_{\eps \to 
   0} f(x)\frac{f(\eps)-f(0)}{\eps} = f'(0) f(x).$$
\end{proof}

\begin{defn}
    Given a field of characteristic zero $F$, an {\em exponential function} is a morphism $E:F\to F^\times$ from the additive to the multiplicative group of the field. We then call $(F,E)$ an {\em exponential field}. 
\end{defn}

\begin{defn}
    Let $\Tglob \subseteq T_{\exp}$ be the theory in the language $\{\leq,0,1,+,\cdot, \exp\}$ whose models are the definably complete expansions of an ordered field with a differentiable function $\exp$ satisfying the following: 
    \begin{itemize}
        \item $\exp'(x) = \exp(x)$ for all $x$;
        \item $\exp(0) = 1$.
    \end{itemize}
\end{defn}

By Proposition \ref{prop:Fratarcangeli}, a model of $\Tglob$ satisfies $\exp(x+y) = \exp(x)\exp(y)$ for all $x,y$, so $\exp$ is an exponential function. Moreover, any model of $\Tglob$ satisfies $\exp' = \exp$, so the models of $\Tglob$ coincide with the EXP-fields in the sense of \cite{Krapp2023}. 

\begin{rem}
    By \cite[Theorem 14]{Dahn1983}, if $M$ is an exponential field satisfying $\exp(x) \geq 1+x$ for every $x$, then $\exp$ is continuous and differentiable and satisfies $\exp' = \exp$, so $\Tglob$ can be alternatively axiomatized using the above inequality.  
\end{rem}

\begin{defn}
    A \emph{restricted exponential field} is an ordered field $M$ equipped with a function  $\texp:M \to M$ satisfying $\texp(0) = 1$ and $\texp(x+y) = \texp(x)\texp(y)$ whenever $x,y,x+y \in (-1,1)$ and $\texp(x) = 0$ for $x\nin (-1,1)$. 
We call $\texp$ a \emph{restricted exponential function}. 
\end{defn}

\begin{defn}
    Let $\Tres \subseteq T_{\texp}$ be the theory in the language $\{\leq,0,1,+,\cdot, \texp\}$ whose models are the definably complete structures $(M,\leq,0,1,+,\cdot, \texp)$ where 
    $(M,\leq,0,1,+,\cdot)$ is an ordered field and $\texp:M\to M$ is a function that is differentiable on $(-1,1)$ and satisfies:
    \begin{itemize}
        \item $\texp'(x) = \texp(x)$ for $x\in (-1,1)$;
        \item $\texp(0) = 1$;
        \item $\texp(x) = 0$ for $x\nin (-1,1)$.
    \end{itemize}
\end{defn}

By Proposition \ref{prop:Fratarcangeli}, a model of $\Tres$ satisfies $\texp(x+y) = \texp(x)\texp(y)$ whenever $x,y,x+y \in (-1,1)$, so it is a restricted exponential field.

\begin{thm}[\cite{FornasieroServi2010,Hieronymi2011}]\label{thm:FSH} We have:
\begin{itemize}
\item Every model of $\Tglob$ is o-minimal. 
\item Every model of $\Tres$ is o-minimal.  
\end{itemize}
\end{thm}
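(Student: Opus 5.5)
The plan is to invoke the general o-minimality theorem for definably complete expansions of ordered fields by Pfaffian-type functions, from \cite{FornasieroServi2010} and \cite{Hieronymi2011}, and to check that models of $\Tglob$ and $\Tres$ meet its hypotheses. Two facts are relevant. First, a definably complete expansion of an ordered field by a family of definable $C^1$ functions forming a Pfaffian chain, on a definable open domain, is \emph{locally o-minimal}. This follows from the Khovanskii-type theory developed in the definably complete setting. Second, Hieronymi's dichotomy shows that a definably complete expansion of an ordered field in which every definable subset of $M$ has either interior or is nowhere dense, and which has no definable discrete infinite subsets (no "$\Z$-like" sets), is o-minimal. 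Combining these reduces the claim to exhibiting the Pfaffian structure and excluding definable discrete sets.

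For $\Tglob$ I would proceed as follows. By Proposition~\ref{prop:Fratarcangeli}, $\exp$ is a homomorphism satisfying $\exp'=\exp$, so it is a Pfaffian function of order one on all of $M$. The first step is to show that every zero set of an exponential polynomial $p(\bar x,\exp(\bar x))$ in one variable has only finitely many connected components. To do this, I would use the definable Rolle theorem, which is available via Proposition~\ref{prop:Taylor} and the definable completeness of $M$. I would then run Khovanskii's induction: a nonzero $p(x,e^x)$ has at most $N$ zeros, where $N$ depends only on the degrees, proved by dividing by a power of $e^x$ and differentiating.

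Uniformity in parameters then excludes infinite discrete definable sets among the quantifier-free definable ones. From there, Fornasiero--Servi's theorem upgrades this to all definable sets: for definably complete structures generated by Pfaffian chains, the structure is o-minimal provided the quantifier-free definable sets satisfy the uniform finiteness bounds.

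For $\Tres$ the argument is the same, but the chain now lives on the open box $(-1,1)^n$. The function $\texp$ vanishes outside $(-1,1)$, so every definable set is obtained from sets defined on boxes in which each coordinate is either inside or outside $(-1,1)$. On the outside parts, $\texp$ is constant, so only semialgebraic data occur there.

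The main obstacle is the step from quantifier-free finiteness to all definable sets, since there is no model completeness available a priori. For this I would rely squarely on the cited theorem of \cite{FornasieroServi2010}, whose conclusion is precisely that the definable closure of Pfaffian chains in a definably complete field yields an o-minimal structure, supplemented by \cite{Hieronymi2011} to rule out definable discrete sets. So the proof reduces to verifying the hypotheses as described above.
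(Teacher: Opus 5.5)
\textbf{There is a genuine gap.} Your plan of citing \cite{FornasieroServi2010} and \cite{Hieronymi2011} and checking hypotheses is the paper's plan, but you have the content of both citations wrong, so the chain of implications does not close. What links the two results is the \emph{definable Baire property}. Fornasiero and Servi prove o-minimality of definably complete expansions of ordered fields by Pfaffian functions under the additional hypothesis that the structure is definably Baire. Hieronymi proves that every definably complete expansion of an ordered field is definably Baire. The Baire property never appears in your proposal.

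Instead you ascribe to Hieronymi a criterion: if every definable subset of $M$ has interior or is nowhere dense, and there is no infinite definable discrete set, then $M$ is o-minimal. Even granting that criterion, its hypotheses concern \emph{all} definable subsets of $M$, and you never verify them. Your Rolle/Khovanskii count only bounds zeros of one-variable exponential polynomials, i.e.\ quantifier-free data, and you yourself name the passage to arbitrary definable sets as the main obstacle.

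You then discharge that obstacle by claiming that Fornasiero--Servi give o-minimality for any definably complete structure generated by Pfaffian chains. That statement drops their Baire hypothesis, and if it were true as you state it, the appeal to Hieronymi would be pointless. Your ``local o-minimality from Khovanskii-type theory in the definably complete setting'' is likewise unsupported: the Khovanskii-type results you have in mind are proved in \cite{FornasieroServi2010} for definably complete \emph{Baire} structures.

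The correct verification is short and needs none of your zero counting:
\begin{enumerate}
\item $\exp'=\exp$ makes $\exp$ a Pfaffian chain of length one on $M$.
\item For $\Tres$, the structure $(M,\texp)$ is interdefinable with the expansion of the field by the restriction of $\texp$ to the open interval $(-1,1)$, where it is Pfaffian.
\item Definable completeness yields the Baire property by \cite{Hieronymi2011}.
\item \cite{FornasieroServi2010} then gives o-minimality in both cases.
\end{enumerate}
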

\begin{pf}
    This is a special case of a more general result concerning Pfaffian functions, of which exponentiation is an example. More precisely, in \cite{FornasieroServi2010} the authors prove the o-minimality of every definably complete expansion of an ordered field with Pfaffian functions assuming a definable version of the Baire property. In \cite{Hieronymi2011} it is shown that the Baire property already follows from definable completeness. 
\end{pf}

A result of Ressayre in \cite{Ressayre1993} shows that $T_{\exp}$ is recursively axiomatized modulo $T_{\texp}$ (another proof appears in \cite[(4.10)]{Dries1994}). The following minor variant shows that $T_{\exp}$ is in fact finitely axiomatized modulo $T_{\texp}$. 

\begin{prop} \label{prop:ressayre}
     Let $R$ be an ordered field and let $E:R\to R$ be a function on $R$. 
     Then $(R,E)$ is elementarily equivalent to $\R_{\exp}$ if and only if the following hold:
     \begin{enumerate}
         \item $E:R\to R^{>0}$ be an increasing surjective exponential function.
         \item $E(x) \geq x+1$ for all $x$. 
  \item    $(R,E_{|(-1,1)})$ is elementarily equivalent to $\R_{\texp}$.
     \end{enumerate}
\end{prop}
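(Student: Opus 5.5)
The forward direction is immediate: in $\R_{\exp}$ the function $\exp$ is an increasing surjective homomorphism onto $\R^{>0}$ and satisfies $e^x\ge 1+x$. Its restriction to $(-1,1)$, extended by $0$ outside, is definable in $\R_{\exp}$, so each of (1), (2), (3) is either a first-order sentence or a definable consequence that transfers to any $(R,E)\equiv\R_{\exp}$. The real content is the converse. The plan is to reduce it to Ressayre's theorem \cite{Ressayre1993} (or \cite[(4.10)]{Dries1994}), which says: if $(R,E)$ is an ordered exponential field whose restriction $E_{|[0,1]}$ (equivalently, $E_{|(-1,1)}$ up to interdefinability) makes $R$ a model of $T_{\texp}$, and $E$ satisfies a certain recursive list of axioms, then $(R,E)\models T_{\exp}$. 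Those axioms are: $E$ is an ordered-group isomorphism onto $R^{>0}$, $E(x)\ge 1+x$, and the growth axioms $E(x)>x^n$ for large $x$. So it suffices to derive the Ressayre axioms from (1)--(3).

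The first step is to obtain the polynomial growth axioms $E(x)>x^n$ for $x>n^2$ (or whatever precise form Ressayre uses) from (1) and (2). Writing $x=n\cdot(x/n)$, we get $E(x)=E(x/n)^n\ge(1+x/n)^n>(x/n)^n$. To beat $x^n$, apply this with $2n$ in place of $n$: $E(x)>(x/2n)^{2n}\ge x^n$ as soon as $x\ge (2n)^2$. Thus every such axiom follows from (1) and (2), which explains why the list becomes finite modulo $T_{\texp}$.

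The second step checks that (3) gives exactly Ressayre's hypothesis on the restriction. Ressayre uses $E_{|[0,1]}$, whereas here the restriction is to $(-1,1)$. The two are interdefinable over the field using $E$ itself: on $[0,1]$ we have $E(1)=E(1/2)^2$ and $E(x)=E(x/2)^2$ for $x\in[0,1]$. Hence $\R_{\texp}$ and $(\R,\exp_{|[0,1]})$ are bi-interpretable by definitions that are uniform in any $(R,E)$ satisfying (1). Therefore (3) implies that $(R,E_{|[0,1]})$ is elementarily equivalent to the corresponding real structure. This translation is routine but has to be stated carefully.

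The main obstacle is pinning down the exact formulation of Ressayre's theorem, namely which growth axioms and which restriction interval it uses, so that step one really produces them. If the cited version requires the scheme $E(x)>x^n$ for $x>n^2$, the computation above covers it. If instead it asks for compatibility with the model of $T_{\texp}$ in some other form, such as an isomorphism between $E_{|[0,1]}$ and the given restricted structure, I would verify that (1) together with the homomorphism property on $(-1,1)$ forces $E$ to agree with the extension of $E_{|(-1,1)}$ by $E(x)=E(x/m)^m$. This agreement is automatic, so $E$ is uniquely determined by its restriction plus (1).
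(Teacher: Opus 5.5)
Your proposal is correct and is essentially the paper's proof: reduce to Ressayre's theorem and derive the growth scheme from (1)--(2) with threshold $4n^2$. Your estimate $E(x)=E(x/2n)^{2n}\ge(1+x/2n)^{2n}>x^n$ is a slightly simpler, monotonicity-free variant of the paper's $E(x)>E(2n\sqrt{x})=E(\sqrt{x})^{2n}\ge(\sqrt{x}+1)^{2n}\ge x^n$. Two side remarks are off but not used: the computation gives $x\ge 4n^2$, not $x>n^2$ (the paper likewise treats the threshold as irrelevant, and on the bounded range $(n^2,4n^2)$ the inequality follows from (1) and (3)), and $E$ is determined by $E_{|(-1,1)}$ and (1) only on $\Fin(R)$, not on infinite elements.
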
 

\begin{pf} 
The result appears in \cite{Ressayre1993} but with axiom $(2)$ replaced by an axiom scheme $(2')$ whose $n$-th instance $(2')_n$ says that $E(x) > x^n$ for every $x > f(n)$ where $f(n)$ is a recursive function of $n$.  It therefore suffices to show that, in the presence of the other axioms, $(2)$ implies $(2')_n$. For simplicity we take $f(n) = 4n^2$ (in \cite{Dries1994} the authors take $f(n) = n^2$, but this is irrelevant). So assume $x> 4n^2$. Then $\sqrt x > 2n$ and we obtain
$E(x) > E(2n \sqrt x) = E(\sqrt x)^{2n} \geq (\sqrt x +1)^{2n} \geq x^{n}$.
\end{pf}

\begin{prop}\label{prop:growth}
    Let $M\models \Tglob$. Then $M$ satisfies conditions (1) and (2) in Proposition \ref{prop:ressayre}.
\end{prop}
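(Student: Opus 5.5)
We need to show that in a model $M$ of $\Tglob$, $\exp$ is an increasing surjective exponential function onto $M^{>0}$ with $\exp(x)\ge x+1$. We work in $M$ using Proposition \ref{prop:Fratarcangeli} and the calculus available in definably complete expansions of ordered fields (Proposition \ref{prop:Taylor}), plus o-minimality from Theorem \ref{thm:FSH} where convenient.

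\emph{Step 1: $\exp$ is a positive homomorphism.} By Proposition \ref{prop:Fratarcangeli}, applied with $a$ arbitrarily large (every $x,y$ lie in some $(-a,a)$), we get $\exp(x+y)=\exp(x)\exp(y)$ for all $x,y$. Hence $\exp(x)\exp(-x)=\exp(0)=1$, so $\exp(x)\neq 0$. Moreover $\exp(x)=\exp(x/2)^2$, so $\exp(x)>0$. Thus $\exp:M\to M^{>0}$ is an exponential function.

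\emph{Step 2: monotonicity and the inequality.} Since $\exp'=\exp>0$, $\exp$ is strictly increasing. This uses the mean value theorem, i.e.\ the case $n=1$ of Proposition \ref{prop:Taylor}. For the inequality, apply Proposition \ref{prop:Taylor} with $n=2$ and $x_0=0$:
\[
\exp(x)=1+x+\tfrac{\exp(\xi)}{2}x^2 \geq 1+x,
\]
since $\exp(\xi)>0$. This gives condition (2).

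\emph{Step 3: surjectivity onto $M^{>0}$.} This is the main point. Let $c>0$. From (2) we get $\exp(c)\ge 1+c>c$. Also $\exp(-1/c)=1/\exp(1/c)\le 1/(1+1/c)<c$. By continuity of $\exp$ (it is differentiable), we then apply the definable intermediate value theorem, which holds in definably complete ordered fields: take $\sup\{x\in[-1/c,c]:\exp(x)\le c\}$, which exists by definable completeness, and check by continuity that its value is $c$. Hence $c$ is attained. The only care needed is in citing or verifying the intermediate value property from definable completeness; the sup argument above suffices.
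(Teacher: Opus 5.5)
Your proof is correct and follows the paper's route: condition (2) comes from the Taylor formula with Lagrange remainder at $x_0=0$, $n=2$, together with $\exp(\xi)>0$. The paper simply calls condition (1) clear, and your Steps 1 and 3 correctly fill in that detail: the homomorphism property via Proposition \ref{prop:Fratarcangeli}, positivity via $\exp(x)=\exp(x/2)^2$, monotonicity via the mean value theorem, and surjectivity via the definable supremum argument using $\exp(-1/c)<c<\exp(c)$.
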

\begin{pf} Condition (1) clearly holds. For (2) observe that, by the Taylor formula, $\exp(x) = 1+x+\exp(\xi)\tfrac {x^2} 2$ for some $\xi\in (0,x)$, so $\exp(x) \geq x+1$ for all $x$. 
\end{pf}

Besides exponential functions and restricted exponential functions, we also need the notion of partial exponential function defined as follows. 

\begin{defn}[\cite{Kirby2010}] \label{defn:exponential field}
Given a field $F = (F,0,1,+,\cdot)$ of characteristic $0$, a {\em partial exponential function} is a homomorphism $\exp:(D(F),+) \to (F,\cdot)$ where $D(F) = \dom(\exp)$ is a $\mathbb{Q}$-vector subspace of $F$.  We then say that $(F, \exp) =(F, 0,1,+,\cdot, \exp)$ is a {\em partial exponential field}. 
\end{defn}

\begin{defn}
    Let $M$ be an expansion of an ordered field. An element $x \in M$ is \emph{finite} if there is $N \in \N$ such that $x \in (-N,N)$. It is \emph{infinitesimal} if for every positive $N \in \N$ we have $x \in \left(-\frac{1}{N}, \frac 1N\right)$.
\end{defn}  

\begin{prop} \label{prop:exp-finite}
Given an ordered field $M$ with a restricted exponential function $\texp:M\to M$, there is unique partial exponential function 
$$\Exp:\Fin(M)\to \Fin(M)$$
that agrees with $\texp$ on $(-1,1)$. Moreover, for every $n\in \N$, the restriction of $\Exp$ to any interval $(-n,n)$ is definable in $(M, \texp)$. 
\end{prop}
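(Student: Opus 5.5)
Existence comes from the doubling trick $\Exp(x) := \texp(x/2^k)^{2^k}$, and uniqueness from the fact that $\Fin(M)$ is generated additively by $(-1,1)$. Two remarks first: $\Fin(M)$ is a $\Q$-vector subspace of $M$, and every $x\in\Fin(M)$ satisfies $x/2^k\in(-1,1)$ for some $k\in\N$. For such $x$ and $k$ I define
\[
\Exp(x) := \texp\bigl(x/2^k\bigr)^{2^k}.
\]

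\emph{Well-definedness.} If $x/2^k\in(-1,1)$, then $x/2^{k+1}\in(-1/2,1/2)$, and the functional equation of the restricted exponential field (with $y=x/2^{k+1}$, $2y\in(-1,1)$) gives $\texp(x/2^k)=\texp(x/2^{k+1})^2$. Hence the value does not change when $k$ is replaced by $k+1$, and so it is independent of the choice of $k$. If $x\in(-1,1)$ we may take $k=0$, so $\Exp$ agrees with $\texp$ there.

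\emph{Homomorphism.} Given $x,y\in\Fin(M)$, choose $k$ so large that $x/2^k$, $y/2^k$ and $(x+y)/2^k$ all lie in $(-1,1)$. The functional equation gives $\texp((x+y)/2^k)=\texp(x/2^k)\texp(y/2^k)$, and raising to the power $2^k$ yields $\Exp(x+y)=\Exp(x)\Exp(y)$.

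\emph{Values finite and nonzero.} From $\texp(0)=\texp(t)\texp(-t)$ we get $\texp(t)\neq 0$ on $(-1,1)$. We also have $\texp(t)=\texp(t/2)^2>0$ there, so $\Exp$ takes values in $M^{>0}$. For finiteness I need a bound on $\texp$ over $(-1/2,1/2)$. Writing $|t|<1/2$ and using $\texp(t)=\texp(t/n)^n$, I would show that $\texp$ is bounded by a standard integer on $[-1/2,1/2]$. The quickest route: $\texp$ is monotone on $(-1,1)$, since it is positive there and $\texp$ of a positive element $t$ is $\texp(t/2)^2$, while $\texp(t)\texp(-t)=1$; I would derive monotonicity from a definable-completeness/derivative argument if the paper's axioms are available, or otherwise use the following. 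For $|s|<1/4$, the values $\texp(s)$ and $\texp(-s)$ are positive with product $1$. Boundedness by a standard integer is where I expect a genuine gap, since a pure restricted exponential field might a priori have $\texp(1/2)$ infinite. If that happens, I would read ``$\Fin(M)\to\Fin(M)$'' as applying to the models of interest, or interpret it via the order: if $\texp$ is increasing then $\texp(x)\le\texp(1/2)^{2}$ on $(-1/2,1/2)$. I would then aim to bound $\texp(1/2)$ by first proving $1+t\le\texp(t)$ and $\texp(t)\le 1/(1-t)$ for small $t$, which in the intended setting follows from Proposition~\ref{prop:Taylor}. This is the main obstacle; everything else is formal.

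\emph{Uniqueness.} Any homomorphism $\Exp'$ on $\Fin(M)$ agreeing with $\texp$ on $(-1,1)$ satisfies $\Exp'(x)=\Exp'(x/2^k)^{2^k}=\texp(x/2^k)^{2^k}$, so $\Exp'=\Exp$.

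\emph{Definability.} On $(-n,n)$, fix $k$ with $2^k\ge n$. Then $\Exp(x)=\texp(x/2^k)^{2^k}$ for all $x\in(-n,n)$, which is a single term of the language $\{\le,0,1,+,\cdot,\texp\}$ with $k$ fixed; this restriction is therefore definable, even quantifier-free.
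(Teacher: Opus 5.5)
Your construction and your checks of well-definedness, the homomorphism property, positivity, uniqueness and definability are correct. They are essentially the paper's proof, which sets $\Exp(x)=\texp(x/n)^n$ for any $n$ with $x\in(-n,n)$ and checks independence of $n$ via $\texp(x/n)^n=\texp(x/nk)^{nk}=\texp(x/k)^k$. Restricting to powers of $2$ is inessential, and you spell out several verifications that the paper leaves implicit.

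The step you flag as the main obstacle cannot be closed, and your suspicion is right: for a bare restricted exponential field the values of $\Exp$ need not be finite. Take a non-archimedean real closed field $M$, a positive infinite $\omega\in M$, and a $\Q$-linear map $\phi:M\to\Q$ with $\phi(1)=2$ and $\phi(\omega^{-1})=-1$. Such a $\phi$ exists because $1$ and $\omega^{-1}$ are $\Q$-linearly independent. Set $\texp(x)=\omega^{\phi(x)}$ for $x\in(-1,1)$ and $\texp(x)=0$ otherwise.

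\begin{itemize}
\item This is a restricted exponential field, yet $\Exp(1)=\texp(1/2)^2=\omega^2\notin\Fin(M)$. So, by your own uniqueness argument, no partial exponential function $\Fin(M)\to\Fin(M)$ extends $\texp$.
\item Although $0<\omega^{-1}<1/2$, we have $\texp(\omega^{-1})=\omega^{-1}<\texp(0)=1<\omega=\texp(1/2)$. So $\texp$ is not monotone, and your sketch deriving monotonicity from positivity and $\texp(t)\texp(-t)=1$ fails as well.
\end{itemize}

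The paper's proof does not address the codomain at all. What holds in general is exactly what you proved: a unique $\Exp:\Fin(M)\to M^{>0}$.

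Finiteness of the values needs the axioms of $\Tres$, and in that setting your proposed route works. By Proposition~\ref{prop:Taylor}, $\texp(s)=1+s+\texp(\xi)s^2/2\ge 1+s$ for $s\in(-1,1)$. Hence $\texp(s)=\texp(-s)^{-1}\le(1-s)^{-1}$, so $\tfrac12<\texp(s)<2$ for $|s|<\tfrac12$. It follows that $\Exp(x)=\texp(x/2^k)^{2^k}\in\Fin(M)^\times$ whenever $2^k>2|x|$.

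The paper obtains the same fact later, in Proposition~\ref{prop:induced-texp}, from Lemma~\ref{lem:exp(-1)} and the monotonicity of $\Exp$. It only relies on finiteness of the values for models of $\Tres$ and residue fields of such, so the overstatement in the proposition does no harm there.
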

\begin{proof}
 Given $x\in \Fin(M)$, choose $n\in \N$ such that $x \in (-n,n)$ and define $\Exp(x) = \texp(x/n)^n$. 
 Using the functional equation $\texp(x+y) = \texp(x)\texp(y)$ for $x,y\in (-1,1)$, it is easy to see that the definition does not depend on the choice of $n$ (e.g. $\texp(x/n)^n = \texp(x/nk)^{nk} = \texp(x/k)^k$).
\end{proof}

\begin{prop}\label{prop:exp-finite-2}
    If $(M,\texp)\models \Tres$ and $\Exp:\Fin(M)\to \Fin(M)$ is the induced partial exponential function, then
    \begin{enumerate}
        \item $\Exp$ is smooth on $\Fin(M)$, 
        \item $\Exp' = \Exp$ on $\Fin(M)$. 
    \end{enumerate}
\end{prop}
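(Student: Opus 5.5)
The plan is to reduce everything to local statements on intervals $(-n,n)$ with $n\in\N$, where $\Exp$ is definable by Proposition \ref{prop:exp-finite}. Since $\Fin(M)$ is the union of these intervals and differentiability and smoothness are local properties, it suffices to fix $n$ and show that $\Exp|_{(-n,n)}$ is smooth with $\Exp'=\Exp$ there. On $(-n,n)$ we have the explicit formula $\Exp(x)=\texp(x/n)^n$, and $x/n\in(-1,1)$.

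For the derivative, the axioms of $\Tres$ say $\texp$ is differentiable on $(-1,1)$ with $\texp'=\texp$. The chain rule and the power rule hold in any ordered field for compositions of differentiable functions, as they are purely $\eps$-$\delta$ algebraic manipulations. So for $x\in(-n,n)$ I get
\[
\Exp'(x)=n\,\texp(x/n)^{n-1}\cdot\texp'(x/n)\cdot\tfrac1n=\texp(x/n)^n=\Exp(x).
\]
This proves (2). It also shows that these local derivatives agree on overlaps, because $\Exp$ itself is well defined independently of $n$.

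For smoothness, I argue by induction on $k$ that $\Exp|_{(-n,n)}$ is $k$ times differentiable with $\Exp^{(k)}=\Exp$. The base case $k=1$ is the computation above. For the inductive step, if $\Exp^{(k)}=\Exp$ on $(-n,n)$ and $\Exp$ is differentiable there, then $\Exp^{(k+1)}=\Exp'=\Exp$. Hence all derivatives exist and are continuous, continuity following from differentiability. So $\Exp$ is $C^\infty$ on each $(-n,n)$, and therefore on $\Fin(M)$.

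There is no real obstacle. The only point needing care is that, in a non-archimedean $M$, the set $\Fin(M)$ is not itself definable, so "smooth on $\Fin(M)$" must be read locally: around each finite point there is a definable open interval $(-n,n)$ on which $\Exp$ is definable and smooth. This is guaranteed because every finite $x$ lies in some $(-n,n)$, and $(-n,n)$ is an open neighbourhood of $x$. I would note that definable completeness is not even needed here beyond what the axioms already give, namely the differentiability of $\texp$ on $(-1,1)$.
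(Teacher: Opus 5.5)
Your proof is correct and follows the paper's argument: restrict to $(-n,n)$, write $\Exp(x)=\texp(x/n)^n$, use the chain rule to get $\Exp'=\Exp$, and get smoothness from that identity (you by induction, the paper by taking smoothness of $\texp$ on $(-1,1)$ as given). One caveat: your closing remark undersells definable completeness, because the independence of $\texp(x/n)^n$ from $n$, which you use for agreement on overlaps, rests on the functional equation, and for models of $\Tres$ that equation is derived from definable completeness in Proposition \ref{prop:Fratarcangeli}.
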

\begin{pf}
    In models of $\Tres$ the function $\texp$ is smooth on $(-1,1)$ and satisfies $\texp(0) = 1$ and $\texp'(x) = \texp(x)$ for $x\in (-1,1)$. Let $n\ge 1$ be an integer and, for $x\in(-n,n)$, define $h(x)=\Exp(x/n) = \texp(x/n)$. Then $\Exp(x)=h(x)^n$, and by the chain rule (which holds in definably complete expansions of fields),
$\Exp'(x)=n\,h(x)^{n-1}h'(x)$.
Moreover,
$h'(x)=\tfrac1n\,h(x)$, so 
$\Exp'(x)=h(x)^n=\Exp(x)$
for every $x\in(-n,n)$.
Finally observe that $\Exp'(x) = \Exp'(0)\Exp(x)$ by Proposition \ref{prop:Fratarcangeli} (applied to the restriction of $\Exp$ to $(-n,n)$) and $\Exp'(0) = \texp'(0) = 1$. 
\end{pf}

\section{Convex valuation rings}\label{sec:conv-val-rings}
Krapp showed that if $M\models \Tglob$ and $\mu$ is the maximal ideal of $\Fin(M)$, then the residue field $\Fin(M)/\mu$ has an induced exponential function \cite[Proposition 3.2]{Krapp2019} and a unique embedding into $\R_{\texp}$ as an exponential field. The argument uses some compatibility results between exponential functions and convex valuations studied by Salma Kuhlmann, see in particular \cite[Lemma 1.17]{Kuhlmann2000}. Here we adapt some of the arguments of Krapp and S. Kuhlmann to models of $\Tres$. 

\begin{lem}\label{lem:exp(-1)}
    Let $M \models \Tres$ and let $\Exp:\Fin(M)\to \Fin(M)$ be the induced partial exponential function. For $n \in \N$ odd we have:
    \[\sum_{k=0}^n \frac{(-1)^k}{k!} < \Exp(-1) < \sum_{k=0}^{n+1} \frac{(-1)^k}{k!}\]
\end{lem}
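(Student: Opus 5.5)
The plan is to apply the Taylor formula with Lagrange remainder (Proposition \ref{prop:Taylor}) to $\Exp$ on the interval $(-2,2)$, expanding around $x_0=0$ and evaluating at $x=-1$. By Proposition \ref{prop:exp-finite}, the restriction of $\Exp$ to $(-2,2)$ is definable in $(M,\texp)$. By Proposition \ref{prop:exp-finite-2}, it is smooth there with $\Exp'=\Exp$. By induction, $\Exp^{(k)}=\Exp$ on $(-2,2)$ for every $k$, and $\Exp(0)=\texp(0)=1$. So for each $m\ge 1$ we get
\[
\Exp(-1)=\sum_{k=0}^{m-1}\frac{(-1)^k}{k!}+\frac{\Exp(\xi_m)}{m!}(-1)^m
\]
for some $\xi_m\in(-1,0)$.

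The key extra fact is that $\Exp(\xi)>0$ for $\xi\in(-1,0)$. Write $\Exp(\xi)=\texp(\xi/2)^2$, which is a square and hence $\ge 0$. It is nonzero because $\texp(\xi/2)\texp(-\xi/2)=\texp(0)=1$ by the functional equation, which holds in models of $\Tres$ by Proposition \ref{prop:Fratarcangeli}.

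Now take $n$ odd. Applying the formula with $m=n+1$ gives a remainder $\Exp(\xi)(-1)^{n+1}/(n+1)!$, which is positive since $n+1$ is even. Hence $\Exp(-1)>\sum_{k=0}^{n}(-1)^k/k!$.

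Applying the formula with $m=n+2$ gives a remainder $\Exp(\xi')(-1)^{n+2}/(n+2)!$, which is negative since $n+2$ is odd. Hence $\Exp(-1)<\sum_{k=0}^{n+1}(-1)^k/k!$.

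The only delicate point is checking the hypotheses of Proposition \ref{prop:Taylor}: we need a definable function that is $C^{m}$ on an open interval containing both $-1$ and $0$. Restricting to $(-2,2)$ handles this, and there $\Exp(x)=\texp(x/2)^2$ is explicitly definable. Beyond that the argument is routine. Strictness of the inequalities comes from the strict positivity of $\Exp(\xi)$.
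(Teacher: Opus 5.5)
Your proposal is correct and follows essentially the same route as the paper: a Taylor expansion with Lagrange remainder of $\Exp$ at $0$ on $(-2,2)$, with the sign of the remainder controlled by $\Exp(\xi)=\texp(\xi/2)^2>0$ for the two consecutive orders $n+1$ and $n+2$. Your check that this square is nonzero, via $\texp(\xi/2)\texp(-\xi/2)=\texp(0)=1$, fills in a detail the paper leaves implicit.
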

\begin{proof} Since the restriction of $\Exp$ to $(-2,2)$ is definable, 
we can use the Taylor formula in $M$ to obtain $\Exp(-1) = \sum_{k=0}^{n-1} \frac{(-1)^k}{k!} + \Exp(\xi)\frac{(-1)^n}{n!}$ for some $\xi \in (-1,0)$. Since $\Exp(\xi) = \Exp(\xi/2)^2$ is positive, it follows that if $n$ is even $\Exp(-1)>\sum_{k=0}^{n-1} \frac{(-1)^k}{k!}$ and if $n$ is odd $\Exp(-1) < \sum_{k=0}^{n-1} \frac{(-1)^k}{k!}$.
\end{proof}

\begin{prop}\label{prop:induced-texp}
Let $M\models \Tres$ and let $\mathcal{O}\subset M$ be a convex subring with maximal ideal $\mathfrak{m} \subset \mathcal{O}$. Then $\texp((-1,1)) \subset \mathcal{O}^\times$ and $\texp(\mathfrak{m}) \subseteq 1 + \mathfrak{m}$. 
\end{prop}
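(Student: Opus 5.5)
The plan is to obtain both inclusions from explicit, field-independent bounds on $\texp$ near $0$, using only the Taylor formula (Proposition \ref{prop:Taylor}) and the positivity of $\texp$ on $(-1,1)$. Any convex subring $\mathcal{O}$ of an ordered field contains $\Z$, hence contains every finite element of $M$ by convexity, so $\Fin(M)\subseteq \mathcal{O}$. Also $\mathfrak{m}$ consists of elements $x\in\mathcal O$ with $1/x\notin\mathcal O$; by convexity every element of $\mathfrak m$ is infinitesimal, since if $|x|\ge 1/N$ then $|1/x|\le N$ lies in $\mathcal O$. Moreover $x\in \mathcal{O}$ is a unit as soon as $|x|\geq 1/N$ for some $N\in\N$.

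First we record that $\texp(x)>0$ for $x\in(-1,1)$. By Proposition \ref{prop:Fratarcangeli}, $\texp(x)=\texp(x/2)^2\geq 0$, and $\texp(x)\neq 0$ because $\texp(x)\texp(-x)=\texp(0)=1$.

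Next we apply the Taylor formula with $n=1$ at $x_0=0$ to the definable $C^1$ function $\texp$ on $(-1,1)$. This gives $\texp(x)=1+x\,\texp(\xi)$ for some $\xi$ between $0$ and $x$, and also the monotonicity of $\texp$ there, since $\texp'=\texp>0$. Monotonicity gives $\texp(-1/2)\le\texp(x)\le\texp(1/2)$ on $[-1/2,1/2]$. For $x\in(-1,1)$ arbitrary, $\texp(x)=\texp(x/2)^2$ with $x/2\in[-1/2,1/2]$, so $\texp(x)\le \texp(1/2)^2$ and $\texp(x)\geq\texp(-1/2)^2$. It remains to bound $\texp(\pm 1/2)$ by rationals. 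From Lemma \ref{lem:exp(-1)} we get $1/3<\Exp(-1)<1/2$, so $\texp(-1/2)^2=\Exp(-1)>1/3$. Since $\texp(1/2)=1/\texp(-1/2)$, this yields $\texp(1/2)^2<3$. Hence $1/3<\texp(x)<3$ for all $x\in(-1,1)$. So $\texp(x)$ is finite, thus in $\mathcal O$, and bounded below by $1/3$, thus a unit. This proves $\texp((-1,1))\subseteq\mathcal O^\times$.

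For the second claim, let $x\in\mathfrak m$. Then $x$ is infinitesimal, so $x\in(-1,1)$, and $\texp(x)-1=x\,\texp(\xi)$ with $\xi$ between $0$ and $x$. By the first part $\texp(\xi)\in\mathcal O$, and since $\mathfrak m$ is an ideal of $\mathcal O$, we get $x\,\texp(\xi)\in\mathfrak m$. Hence $\texp(\mathfrak m)\subseteq 1+\mathfrak m$.

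The only step needing care is obtaining rational bounds for $\texp$ on all of $(-1,1)$, not just near $0$. The halving trick combined with Lemma \ref{lem:exp(-1)} handles this, and everything else is routine convexity.
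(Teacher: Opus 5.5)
Your proof is correct and follows essentially the same route as the paper: the rational bounds on $\Exp(-1)=\texp(-1/2)^2$ from Lemma \ref{lem:exp(-1)}, combined with monotonicity, show that $\texp$ is finite and bounded away from $0$ on $(-1,1)$, and the Taylor formula then gives $\texp(\mathfrak m)\subseteq 1+\mathfrak m$. The differences are cosmetic. You bound $\texp$ directly on $(-1,1)$ via $\texp(x)=\texp(x/2)^2$ rather than bounding $\Exp$ on all of $\Fin(M)$, you use the first-order rather than the second-order Taylor expansion, and you spell out the positivity and monotonicity that the paper takes for granted.
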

\begin{pf}
By Lemma \ref{lem:exp(-1)}, for every $q \in \Q$ we have $\Exp(q)=\Exp(-1)^{-q} \in \Fin(M)^\times$. Since $\Exp$ is increasing, $\Exp$ maps $\Fin(M)$ into $\Fin(M)^\times$, hence $\texp$ maps $(-1,1)$ into $\Fin(M)^\times$, and the latter is contained in $\mathcal{O}^\times$. 
By the Taylor formula (Proposition \ref{prop:Taylor}), for $\varepsilon \in \mathfrak{m}$ we have $\texp(\varepsilon)=1+\varepsilon+\exp(\xi) \frac{\eps^2}{2}$ for some $\xi$ between $0$ and $\eps$. Since $\exp(\xi)\in \mathcal O$, it follows that $\exp(\xi) \frac{\eps^2}{2}\in \mathfrak m$, hence $\texp(\varepsilon) \in 1+\mathfrak{m}$.
\end{pf}

\begin{cor}\label{cor:induced-texp}
    Let $M\models \Tres$ and let $\mathcal{O}\subset M$ be a convex subring with maximal ideal $\mathfrak{m} \subset \mathcal{O}$.
    Then the residue field $\mathcal{O}/\mathfrak{m}$ has an induced restricted exponential function $\overline{\texp}:\mathcal O/\mathfrak{m}\to \mathcal O/\mathfrak{m}$ defined by 
$$
\overline{\texp}(x+\mathfrak{m}) = 
\begin{cases}
\texp(x) + \mathfrak{m} & \text{ if } x + \mathfrak{m}\subset (-1,1) \\ 
0 & \text{ if } x + \mathfrak{m} \not\subset (-1,1).
\end{cases}
$$ 
Moreover, if $\overline{\Exp}:\Fin(\mathcal{O}/\mathfrak{m})\to \Fin(\mathcal{O}/\mathfrak{m})$ is the induced partial exponential function (as in Proposition \ref{prop:exp-finite}), 
then $\overline{\Exp}(x+\mathfrak{m}) = \Exp(x)+\mathfrak{m}$. 
\end{cor}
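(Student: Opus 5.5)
We must check three things: that $\overline{\texp}$ is well defined, that it is a restricted exponential function on the residue field $\mathcal O/\mathfrak m$ (which is ordered, since $\mathcal O$ is convex), and that $\overline{\Exp}(x+\mathfrak m)=\Exp(x)+\mathfrak m$.

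\emph{Well-definedness.} The definition splits according to whether the whole coset $x+\mathfrak m$ lies in $(-1,1)$, which depends only on the class of $x$. Suppose $x+\mathfrak m\subset(-1,1)$ and $y=x+\eps$ with $\eps\in\mathfrak m$. Then $x+\eps/2\in(-1,1)$ as well. Moreover $x+\eps/2$ and $\pm\eps/2$ have sum in $(-1,1)$, and $\eps/2\in(-1,1)$ because $\mathfrak m\subseteq(-1,1)$. So Proposition \ref{prop:Fratarcangeli} gives
\[\texp(y)=\texp(x+\eps/2)\texp(\eps/2),\qquad \texp(x)=\texp(x+\eps/2)\texp(-\eps/2).\]
By Proposition \ref{prop:induced-texp}, $\texp(\pm\eps/2)\in1+\mathfrak m$ and $\texp(x+\eps/2)\in\mathcal O$. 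Hence both $\texp(x)$ and $\texp(y)$ are congruent to $\texp(x+\eps/2)$ modulo $\mathfrak m$. Splitting $\eps$ in half avoids the case where $x$ sits near the boundary.

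\emph{Restricted exponential axioms.} The residue map is order preserving in the weak sense. Hence $\bar x\in(-1,1)$ in $\mathcal O/\mathfrak m$ exactly when $|x|<1-\delta$ for some $\delta\notin\mathfrak m$, and this is equivalent to $x+\mathfrak m\subset(-1,1)$. Thus the second clause of the definition is exactly "$\texp=0$ outside $(-1,1)$". The value at $0$ is $\overline{\texp}(0)=\texp(0)+\mathfrak m=1$. If $\bar x,\bar y,\bar x+\bar y\in(-1,1)$, then the representatives satisfy $x,y,x+y\in(-1,1)$, and the functional equation descends from Proposition \ref{prop:Fratarcangeli} by passing to residues.

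\emph{The $\Exp$ claim.} The expression $\Exp(x)+\mathfrak m$ is only meaningful when $x\in\Fin(M)$, and for such $x$ the class $\bar x$ lies in $\Fin(\mathcal O/\mathfrak m)$. (If the intended domain is all of $\Fin(\mathcal O/\mathfrak m)$, one must check that every finite residue class has a finite representative. This holds: if $|\bar x|<n$ then some representative lies in $(-n,n)$.) Fix such $x$ and choose $n$ with $|x|<n$ and $x/n+\mathfrak m\subset(-1,1)$; replacing $n$ by $2n$ ensures this. By the construction in Proposition \ref{prop:exp-finite},
\[\overline{\Exp}(\bar x)=\overline{\texp}(\bar x/n)^n=\bigl(\texp(x/n)+\mathfrak m\bigr)^n=\Exp(x)+\mathfrak m.\]

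The only delicate point is well-definedness near the endpoints $\pm1$, which the splitting $\eps=\eps/2+\eps/2$ handles.
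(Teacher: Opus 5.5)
Your proof is correct and follows the paper's argument: well-definedness comes from the functional equation together with Proposition \ref{prop:induced-texp}, and the last claim uses the same computation $\overline{\Exp}(\bar x)=\overline{\texp}(\bar x/n)^n=\Exp(x)+\mathfrak m$; you additionally verify the restricted exponential axioms on $\mathcal O/\mathfrak m$, which the paper leaves implicit. Halving $\eps$ is harmless but unnecessary, because $y\in x+\mathfrak m\subset(-1,1)$ and $x-y\in\mathfrak m\subset(-1,1)$, so the paper simply writes $\texp(x)=\texp(y)\texp(x-y)\in\texp(y)(1+\mathfrak m)$ and there is no boundary issue to avoid.
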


\begin{pf}
    If $x+\mathfrak{m} \subset (-1,1)$ and $x-y \in \mathfrak{m}$, then by Proposition \ref{prop:induced-texp} we have $\frac{\texp(x)}{\texp(y)}=\texp(x-y) \in 1+\mathfrak{m}$. It follows that $\texp(x) \in \texp(y) + \mathfrak{m} \texp(y)$, and since $\texp(y) \in \mathcal{O}$ we have $\texp(x)-\texp(y) \in \mathfrak{m}$. So the definition of $\overline{\texp}$ is well-posed.

    Finally, if $x +\mathfrak{m} \in \Fin(\mathcal{O}/\mathfrak{m})$ then there is $n \in \N$ such that $\frac{x+\mathfrak{m}}{n} \subset (-1,1)$ and \[\overline{\Exp}(x+\mathfrak{m})=\overline{\texp} \left( \frac{x}{n}+\mathfrak{m}\right)^n =\left(\texp\left( \frac xn\right)+\mathfrak{m}\right)^n=\texp\left(\frac xn \right)^n+\mathfrak{m}=\Exp(x)+\mathfrak{m}. \]
\end{pf}

We do not know whether the restricted exponential field $(\mathcal O/\mathfrak{m}, \overline{\texp})$ defined above is a model of $\Tres$, since a priori it may not be definably complete.

Consider now the case $\mathcal{O} = \Fin(M)$. In this case the residue field is archimedean and we obtain the following result. 

\begin{prop}\label{prop:archimedean-residue}
    Let $M \models \Tres$, and let $\mu$ denote the maximal ideal of $\Fin(M)$. Then $\Fin(M)/\mu$ has a unique embedding into $\Rrexp$ as a restricted exponential field. 
\end{prop}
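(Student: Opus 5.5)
Let $K = \Fin(M)/\mu$. Since $K$ is an archimedean ordered field, Hölder's theorem gives a unique ordered-field embedding $\iota: K \to \R$. Concretely, $\iota(x+\mu)$ is the standard part of $x$, namely the unique real $r$ with $q < x$ for all rationals $q<r$ and $x<q$ for all rationals $q>r$. Uniqueness of the embedding in the proposition is immediate: any embedding of restricted exponential fields is in particular an embedding of ordered fields, and there is only one such. So the task reduces to showing that $\iota$ is compatible with the exponentials, that is, $\iota(\overline{\texp}(\xi)) = \texp(\iota(\xi))$ for every $\xi\in K$. Here $\overline{\texp}$ is the induced function of Corollary \ref{cor:induced-texp}.

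We first check the case $\xi\notin(-1,1)$. Suppose $\xi = x+\mu$ with $x+\mu\not\subset(-1,1)$. Then $\iota(\xi)\notin(-1,1)$: if $\iota(\xi)\in(-1,1)$ then $x$ differs from a real in $(-1,1)$ by an infinitesimal, so $x+\mu\subset(-1,1)$. Both sides are then $0$. Conversely, if $x+\mu\subset(-1,1)$ then $|\iota(\xi)|\le 1$. The boundary cases $\iota(\xi)=\pm 1$ cannot occur: if $\iota(\xi)=1$, then $x+\mu$ contains $1$ and is not contained in $(-1,1)$. Hence $\iota(\xi)\in(-1,1)$.

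It remains to compare $\iota(\texp(x)+\mu)$ with $e^{r}$, where $r=\iota(x)\in(-1,1)$ and $x\in(-1,1)$, so that $x=r'+\varepsilon$ up to the standard part. I would use Taylor bounds analogous to Lemma \ref{lem:exp(-1)}. By Proposition \ref{prop:Taylor} applied to $\texp$ on $(-1,1)$ (or to $\Exp$ on $(-2,2)$, which is definable), we get
\[\texp(x)=\sum_{k=0}^{n-1}\frac{x^k}{k!}+\Exp(\eta)\frac{x^n}{n!}\]
with $\eta$ between $0$ and $x$. Since $\Exp$ is increasing and $\Exp(\eta)\le\Exp(1)<3$ (Lemma \ref{lem:exp(-1)} gives $\Exp(-1)>1/3$, and $\Exp(1)=\Exp(-1)^{-1}$), the remainder is bounded in absolute value by $3/n!$. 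Taking standard parts, which commute with polynomial expressions with rational coefficients, yields
\[\Bigl|\iota(\texp(x)+\mu)-\sum_{k<n} r^k/k!\Bigr|\le 3/n!\]
for every $n$. Letting $n\to\infty$ in $\R$ gives $\iota(\texp(x)+\mu)=e^r$.

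The only delicate point is the bound $\Exp(\eta)<3$ uniformly, and this follows from monotonicity together with Lemma \ref{lem:exp(-1)}. Monotonicity holds because $\Exp'=\Exp>0$ (Proposition \ref{prop:exp-finite-2} and $\Exp(y)=\Exp(y/2)^2$). Everything else is routine standard-part bookkeeping.
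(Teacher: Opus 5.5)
Your proof is correct, but it reaches compatibility with $\texp$ by a different route from the paper.

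\textbf{The paper's route.} It passes to the induced partial exponential $\overline{\Exp}$ on the whole image $\iota(\Fin(M)/\mu)\subseteq\R$. It uses the two-sided bounds of Lemma~\ref{lem:exp(-1)} for \emph{all} odd $n$ to pin down $\overline{\Exp}(-1)=e^{-1}$. The homomorphism property then gives agreement with $e^x$ on $\Q$. Finally it extends to every point of the image by a density argument. That step is phrased ``by continuity'', but what is actually available there is monotonicity of $\overline{\Exp}$. Only at the end does it restrict to $(-1,1)$.

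\textbf{Your route.} You do a pointwise Taylor estimate at each $x\in(-1,1)$, with remainder bounded uniformly by $3/n!$. This needs only the crude bound $\Exp(-1)>1/3$ (the case $n=3$ of the lemma) plus monotonicity, and it needs no density or extension step. In effect you prove the first-order fact $|\texp(x)-\sum_{k<n}x^k/k!|\le 3/n!$ inside $M$ and then take standard parts.

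\textbf{What each buys.} Your version makes explicit the compatibility of the clause ``$\texp=0$ outside $(-1,1)$'', including the boundary cases $\iota(\xi)=\pm1$, which the paper leaves implicit. The paper's version is more algebraic: it shows that an increasing partial exponential on a subfield of $\R$ is determined by its value at a single point.

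\textbf{Two small phrasing fixes.} Where you say that $x$ ``differs from a real in $(-1,1)$ by an infinitesimal'', note that real numbers need not lie in $M$. Argue instead with rationals $-1<q_1<\iota(\xi)<q_2<1$; these bound every element of $x+\mu$, so $x+\mu\subset(-1,1)$. Also, the clause ``so that $x=r'+\varepsilon$ up to the standard part'' carries no content and can be deleted.
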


\begin{pf}
Since $\Fin(M)/\mu$ is archimedean, it has a unique ordered field embedding $\iota: \Fin(M)/\mu \to \R$ into the reals \cite{KaplanskyMaximal1942}. We need to show that the embedding  preserves the restricted exponential function. Let $\overline{M}:=\img(\iota) \subseteq \R$. For $x\in \Fin(M)$, let $\overline x = \iota(x+\mu) \in \overline{M}$. Since $\Fin(\overline{M})=\overline{M}$, by Corollary \ref{cor:induced-texp} there is a well-defined exponential function $\overline{\Exp}:\overline{M}\to \overline{M}$ given by $\overline{\Exp}(\overline x) =\overline{\Exp(x)}$.

It follows from Lemma \ref{lem:exp(-1)} that $\overline{\Exp}(-1)=e^{-1}$, and therefore for every $q \in \Q$ we obtain $\overline{\Exp}(q)=\overline{\Exp}(-1)^{-q}=e^{-q}$, so $\overline{\Exp}$ coincides with the real exponential on $\Q$ and by continuity $\overline{\Exp}(x) = e^x$ for every $x \in \overline M$. Considering the restriction of $\overline{\Exp}$ to $(-1,1)$ the desired result follows. 
\end{pf}

In Theorem \ref{thm:embedding} we will show that, under Schanuel's conjecture, $\Fin(M)/\mu$ embeds in $M$ as a restricted exponential field.

\section{Tangent spaces and regular points}\label{sec:tan-spaces}

We need a few results about differentiable definable functions in the o-minimal setting, in particular regarding regular points and transversality, see for instance \cite{Berarducci2001,Servi2008, ServiTesi2008}. 

Let $N$ be an o-minimal expansion of an ordered field, for instance a model of $\Tres$ or $\Tglob$. 
Let $0<p\in \N$ and let $f:X\to Y$ be a definable map between two definable sets  $X\subseteq N^n$ and $Y\subseteq N^k$. We say that $f$
is a {\em definable $C^p$-map} around $x\in X$ if there is a 
definable open neighborbood $U\subset N^n$ of $x$ and a definable map
$F\colon U\to N^k$ extending $f_{|U \cap X}$ such that all the
partial derivatives $\partial^s F/ \partial x_{i_1}\cdots \partial
x_{i_s}$ exist and are continuous for each $s\leq p$. If this happens around every point of $X$, we say that $f$ is a definable $C^p$-map.  
A {\em definable $C^p$-diffeomorphism} is a bijective definable $C^p$-map whose inverse is a definable $C^p$-map. 

Let $X$ be a definable subset of $N^n$. Given $x\in X$ we define $T_x(X) \subseteq N^n$ as the set of all $v\in N^n$ such that there is a definable curve $\gamma:(-\eps,\eps)\to X$ of class $C^1$ with $\gamma(0) = x$ and $\gamma'(0) = v$, where $\gamma'(0) = \lim_{t\to 0}\frac{\gamma(t)-\gamma(0)}{t}$. Without additional hypotheses, the sum of two vectors of $T_x(X)$ may not lie in $T_x(X)$. However,  
if $X \subseteq N^n$ is locally definably $C^1$-diffeomorphic to an open subset of $N^d$ around $x\in X$, then $T_x(X)$ is an $N$-linear subspace of $N^n = T_x(N^n)$ of dimension $d$. In this case we say that $X$ is a {\em definable $C^1$-manifold} of dimension $d$ around $p$ and $T_x(X)$ is its {\em tangent space at $x$}. The notion of definable $C^p$-manifold is defined similarly. 

If $f:X\to Y$ is a definable $C^1$-map around $x\in X$ and $v = \gamma'(0)\in T_x(X)$ (where $\gamma:(-\eps,\eps)\to X$ is a definable function of class $C^1$ with $\gamma(0) = x$), then $df_x:T_x(X)\to T_{f(x)}(Y)$ is defined by $df_x(v) = (f \circ \gamma)'(0)$. If $X$ is a definable $C^1$-manifold around $x$ and $Y$ is a definable $C^1$-manifold around $f(x)$, then $df_x$ is a linear map. 

In particular, if $f = (f_1, \ldots, f_k):N^n\to N^k$ is a definable function of class $C^1$ around $x\in N^n$, then $df_x:N^n\to N^k$ is the linear map given by the Jacobian matrix $J(f_1, \ldots, f_k)(x) = (\partial f_i/\partial x_j)_{ij}(x)$.  

\begin{defn}
    Given an open definable set $U \subseteq N^n$  and definable functions $f_1,\dots,f_k:U \to N$, we write $V_U(f_1,\dots,f_k)$ for the set of common zeros of $f_1,\dots,f_k$ in $U$. When $U=N^n$, we write $V_n(f_1,\dots,f_k)$. 
    
    If $f = (f_1, \ldots, f_k)$ is of class $C^1$ around a point $x \in V_U(f_1,\dots,f_k)$, and $df_x$ has maximal rank (equal to $k$), we say that $x$ is a {\em regular point} of $f$. We write $V_U^{\reg}(f_1, \ldots, f_k) \subseteq V_U(f_1, \ldots, f_k)$ for the set of regular points. 
\end{defn}

If $X = V_U(f_1, \ldots, f_k) \subseteq N^n$ and $f_1,\dots,f_k$ are definable $C^1$-maps around $x \in X$, then $T_x(X) \subseteq \ker (df_x)$. The equality $T_x(X) = \ker (df_x)$ holds if and only if $x$ is a regular point of $f$. In this case $T_x(X)$ has dimension $n-k$ and $X$ is locally definably $C^1$-diffeomorphic to an open subset of $N^{n-k}$ around $x$. We thus have:   
\begin{prop}[{\cite[Theorem 3.2]{Berarducci2001}}]\label{prop:dimension}
    If $X = V_U^{\reg}(f_1, \ldots, f_k) \subseteq N^n$ is non-empty, then it has dimension $n-k$. In particular, if $n=k$, then $\dim(X) = 0$, so $X$ is a finite set by o-minimality.
\end{prop}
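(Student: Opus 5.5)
The plan is to reduce the statement to two facts: at a regular point the zero set is locally a definable $C^1$-manifold of dimension $n-k$, and o-minimal dimension is local. Let $X = V_U^{\reg}(f_1,\dots,f_k)$ be non-empty and pick $x\in X$.

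\emph{Step 1 (local chart).} Since $df_x$ has rank $k$, after permuting coordinates I write $N^n = N^{n-k}\times N^k$ so that the $k\times k$ minor $\partial f/\partial y$ in the last $k$ variables is invertible at $x=(a,b)$. I then apply the definable implicit function theorem for o-minimal expansions of ordered fields (available in the setting of \cite{Servi2008, ServiTesi2008}, and reducible to the definable inverse function theorem applied to $(u,y)\mapsto(u,f(u,y))$). This gives definable open boxes $A\ni a$, $B\ni b$ and a definable $C^1$ map $g:A\to B$ such that $V_U(f)\cap(A\times B)$ is the graph of $g$.

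\emph{Step 2 (regular points form an open subset of the zero set).} The Jacobian is continuous near $x$ and the relevant minor is non-zero at $x$, so after shrinking $A$ and $B$ every point of the graph of $g$ is again regular. Hence $X\cap(A\times B)=\Gamma(g)$, which is definably homeomorphic to the open set $A\subseteq N^{n-k}$ via projection.

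\emph{Step 3 (dimension).} In an o-minimal structure, dimension is preserved by definable bijections, so $\dim \Gamma(g) = \dim A = n-k$. Dimension of a definable set is the maximum of the local dimensions, by cell decomposition and the fact that the dimension of a finite union is the maximum of the dimensions. Since every $x\in X$ has a neighbourhood in which $X$ has dimension $n-k$, we get $\dim X = n-k$. To verify this, I would cover $X$ by finitely many cells and note that a cell of dimension $>n-k$ meeting $X$ would contain a point whose neighbourhood in $X$ has dimension $>n-k$, which is a contradiction. A cell of dimension exactly $n-k$ exists inside some $\Gamma(g)$. When $n=k$, $\dim X=0$, and zero-dimensional definable sets in o-minimal structures are finite.

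The main obstacle is Step 1: the implicit function theorem is not automatic over non-archimedean fields, where compactness and connectedness arguments fail. I would cite it from the o-minimal literature. Failing that, I would prove the inverse function theorem via definable completeness, using the mean value inequality from Proposition \ref{prop:Taylor} applied along segments to obtain local injectivity, and o-minimal openness arguments to obtain openness of the image.
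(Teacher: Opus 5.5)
Your proposal is correct and follows essentially the paper's own route. The paper likewise notes that at a regular point $X$ is locally definably $C^1$-diffeomorphic to an open subset of $N^{n-k}$, via the o-minimal implicit function theorem (which is standard for o-minimal expansions of fields, so your Step 1 is not a real obstacle), and deduces $\dim X = n-k$, with finiteness for $n=k$ by o-minimality; your Step 3 simply spells out the locality-of-dimension argument that the paper leaves implicit.
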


\begin{rem}\label{rem:meet-transversally} Let $U$ be an open subset of $N^n$. 
Let $f:U\to N^k$ and $g:U \to N^m$ be definable $C^1$-maps around $x$ with sets of zeros $X = V_U(f)$ and $Y= V_U(g)$, and let $x \in X \cap Y$ be a regular point of both $f$ and $g$. Then $x$ is a regular point of $(f,g):U \to N^{k+m}$ if and only if $X$ and $Y$ meet {\em transversally} at $x$, i.e. $T_x(X)+T_x(Y) = N^n$. 
\end{rem}

\section{Restricted exponential polynomials}\label{sec:res-exp-pol}

\begin{defn} 
    Let $N$ be a restricted exponential field, $R \subseteq N$ a subring. An {\em $(\ell,n)$-$\E$-polynomial} over $R$ is an expression of the form $$F_p(x_1, \ldots, x_n) := p(x_1, \ldots, x_n, \E(x_1), \ldots, \E(x_\ell))$$
    where $\ell \leq n$ and $p(x_1, \ldots, x_n, y_1, \ldots, y_\ell) \in R[\bar x, \bar y]$ is a polynomial over $R$.
\end{defn}

An $(\ell,n)$-$\E$-polynomial defines a function $F_p$ from $N^n$ to $N$ which is possibly discontinuous (since $\texp$ is discontinuous at $-1,1$). If $N\models \Tres$, the restriction of $F_p$ to $(-1,1)^\ell\times N^{n-\ell}$ is continuous and smooth.  

\begin{defn} 
    We write $U_{\ell,n}$ for the set $(-1,1)^\ell \times N^{n-\ell} \subseteq N^n$ and $V_{\ell,n}(f_1,\dots,f_k)$ for $V_{U_{\ell,n}}(f_1,\dots,f_k)$, namely the set of common zeros of $f_1,\dots,f_k$ in $U_{\ell,n}$. Similarly we define $V_{\ell,n}^{\reg}(f_1,\dots,f_k)=V_{U_{\ell,n}}^{\reg}(f_1,\dots,f_k)$. 
\end{defn}

\begin{defn} Let $N$ be a restricted exponential field, $R \subseteq N$ a subring.
\begin{itemize}
    \item An {\em $(\ell,n)$-$\E$-variety} over $R$ is a set of the form $V_{\ell,n}(f_1, \ldots, f_k)$ where $f_1,\dots,f_k$ are $(\ell,n)$-$\E$-polynomials over $R$. 
    \item An {\em $(\ell,n)$-Khovanskii point} over $R$ is an $n$-tuple 
    $$(\alpha, \beta) = (\alpha_1, \ldots, \alpha_\ell, \beta_1, \ldots, \beta_{n-\ell})\in U_{\ell,n} \subseteq N^n$$ 
    that belongs to a set of the form $V_{\ell,n}^{\reg}(f_1, \ldots, f_n)$ where $f_1, \ldots, f_n$ are $(\ell,n)$-$\E$-polynomials over $R$. 
    \item An {\em $(\ell,n)$-Khovanskii system} over $R$ is a system of equations and inequations of the form
	\[
	\bigwedge_{i=1}^n f_i(x_1, \ldots, x_n)=0 \; \wedge \; \det(J(f_1, \ldots, f_n)(x_1, \ldots, x_n))\neq 0
	\] 
	where $f_i=F_{p_i}$ are $(\ell,n)$-$\E$-polynomials over $R$ and $J(f_1, \dots, f_n) = \left(\frac{\partial f_i}{\partial x_j}\right)_{ij}$ is the Jacobian matrix of $f_1, \ldots, f_n$ defined formally using the rule $\frac{\partial\texp(x)}{\partial x}=\texp(x)$.
\end{itemize}
Note that these definitions assume a given ordering of the variables, namely the first $\ell$ variables are restricted to $(-1,1)$. 
When there is no need to specify $n,\ell,R$, we simply speak of $\E$-polynomial, $\E$-variety, Khovanskii point, Khovanskii system. 
\end{defn}

It follows from the definitions that the $(\ell,n)$-Khovanskii points are the points that satisfy some $(\ell,n)$-Khovanskii system. 

\begin{defn}
We say an $\E$-polynomial has \emph{complexity} $\leq \ell$ if it is an $(\ell,n)$-$\E$-polynomial for some $n \in \N$. We similarly define the complexity of an $\E$-variety, Khovanskii point, or Khovanskii system.
\end{defn}

\begin{prop}\label{prop:servi}
Let $N \models \Tres$. Every non-empty $\E$-variety of complexity $\leq \ell$ contains a projection of an $\E$-Khovanskii point of complexity $\leq \ell$ defined over the same parameters. 
 \end{prop}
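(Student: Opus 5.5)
Given a non-empty $(\ell,n)$-$\E$-variety $X=V_{\ell,n}(f_1,\dots,f_k)$ over $R$, I will follow the standard Khovanskii/Wilkie/Servi reduction: find a point of $X$ that is a regular isolated zero of a square system obtained by adding equations to the $f_i$ and variables to the ambient space. The added functions must themselves be $\E$-polynomials of complexity $\le \ell$ with coefficients in $R$, and the first $\ell$ coordinates must stay restricted to $(-1,1)$. The first step replaces the system by one equation. Set $g=\sum_i f_i^2$, which is again an $(\ell,n)$-$\E$-polynomial over $R$ with $V_{\ell,n}(g)=X$. Then I compactify the restricted coordinates without leaving the class. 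For a new variable $t$, consider
\[
h(x,t)=g(x)^2+\Big(t^2\big(1+\textstyle\sum_{j>\ell}x_j^2\big)\prod_{i\le \ell}(1-x_i^2)-1\Big)^2 .
\]
The zeros of $h$ in $U_{\ell,n+1}$ project onto $X$, the set $V(h)$ is closed in $N^{n+1}$, and points of $V(h)$ stay away from the boundary $x_i=\pm1$ and from infinity in the $x_j$ by the constraint; I expect to use this to get definable compactness of suitable slices. Even without this device, the key property needed is that $X$ is a non-empty definable set in the o-minimal structure $N$ (Theorem \ref{thm:FSH}), so that a definable minimization makes sense.

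The second step produces a regular point by induction on the number of variables, using a Lagrange-multiplier/minimal-distance argument. Consider the definable function $\phi(x)=\sum_{j>\ell}x_j^2+\sum_{i\le \ell}(1-x_i^2)^{-1}$ on the closed-in-$U_{\ell,n}$ set $X$; it is proper there. By definable completeness together with o-minimality it attains a minimum on $X$. I would also add a small generic perturbation $\sum c_jx_j$ with $c_j\in\Q$, so that $R$ is unaffected. Clearing denominators, the critical-point conditions of $\phi$ restricted to a level set of $g$ become polynomial in $x$ and $\E(x_1),\dots,\E(x_\ell)$. These are the conditions that $\nabla g$ and $\nabla\phi$ are parallel, i.e.\ the $2\times2$ minors $\partial_a g\,\partial_b\phi-\partial_b g\,\partial_a\phi=0$, where $\partial_i\E(x_i)=\E(x_i)$ keeps us in the class. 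The standard Khovanskii trick is this: if the minimum point is not regular for the new system, pass to the singular locus, which is defined by adding the vanishing of the relevant Jacobian minors. These are again $(\ell,n)$-$\E$-polynomials over $R$. The resulting variety has strictly smaller dimension, or the point lies in a smaller-dimensional stratum, so induction on $\dim X$ applies. Formally, I would prove by induction on $d$ that every non-empty $\E$-variety of complexity $\le\ell$ and dimension $\le d$ contains a projection of a Khovanskii point. In the base case $d=0$ the variety is finite, and a point of it is an isolated zero, so it is regular for a suitable square system after adding multipliers.

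The third step turns "$x$ is a nondegenerate critical point of $\phi$ on $V(g)$ with $dg_x\ne0$" into a genuinely square regular system $V^{\reg}_{\ell,n+1}(\cdots)$ in variables $(x,\lambda)$: $g=0$ and $\partial_j\phi-\lambda\partial_j g=0$. Its regularity is equivalent to nondegeneracy of the bordered Hessian. Nondegeneracy will be secured by choosing the rational linear perturbation generically: o-minimal Sard (available via \cite{Berarducci2001,Servi2008}) gives that for all but a lower-dimensional set of coefficient vectors the critical points are nondegenerate, and rationals are dense enough since the bad set is definable of lower dimension. The main obstacle is this genericity-with-rational-coefficients step combined with keeping the minimizer regular; if Sard over $\Q$ is problematic, I would instead allow coefficients from $R$ extended by a parameter and note that "defined over the same parameters" permits adding the remaining variables existentially as extra coordinates. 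That is, I would treat the perturbation coefficients as new unrestricted variables constrained to lie in a regular family, which again only increases $n$ and not $\ell$.
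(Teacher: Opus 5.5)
There is a genuine gap, and it sits where the difficulty of the proposition lies: obtaining a presentation in which points of the variety are regular. Your first step, replacing $f_1,\dots,f_k$ by $g=\sum_i f_i^2$, destroys regularity. At every $x\in X$ we have $\nabla g(x)=2\sum_i f_i(x)\nabla f_i(x)=0$, so $V^{\reg}_{\ell,n}(g)=\emptyset$. Consequently:
\begin{itemize}
\item the row of $g$ in the Jacobian of your Lagrange system $g=0$, $\partial_j\phi-\lambda\partial_j g=0$ vanishes on $X\times N$;
\item the parallelism minors $\partial_a g\,\partial_b\phi-\partial_b g\,\partial_a\phi$ vanish on all of $X$;
\item the condition $dg_x\neq 0$ of your third step is never met.
\end{itemize}
The same applies to $h$, which is again a sum of squares. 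So the singular locus you pass to is $X$ itself, and the induction on $\dim X$ never moves.

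This is not only an artifact of sums of squares. For $X=V_{0,2}(x_1^2)$ every point is singular for the given presentation, and adjoining the Jacobian minors leaves the zero set unchanged. Hence ``add the minors and the dimension drops'' is false for non-reduced presentations. Your base case has the same flaw: an isolated zero of $\E$-polynomials is not automatically a regular zero of some square system over the same parameters. For instance, the critical-point system $\nabla g=0$ at an isolated zero of $g=\sum_i f_i^2$ has Hessian $2J^{T}J$, which is degenerate unless $x$ is already a regular zero of $n$ of the $f_i$. That isolated points are projections of Khovanskii points is essentially the content of the proposition; this is exactly how Corollary \ref{cor:definable-closure} uses it.

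What you are missing is a mechanism that changes the presentation, together with a proof that it terminates and leaves regular points in every clopen piece. Concretely, on the manifold of regular zeros of a maximal-rank subsystem, one replaces the remaining equations by suitable derivatives along it (as $x_1^2$ should be replaced by $2x_1$). In a non-archimedean $N$ you cannot appeal to analyticity for this. The paper imports it from Servi as Lemma \ref{lem:servi3344}(\ref{33441}) and (\ref{33440}), which rest on the ring of $(\ell,n)$-$\E$-polynomials being Noetherian and closed under differentiation. It then descends in dimension with (\ref{33442}) until it reaches a zero-dimensional regular component, which consists of Khovanskii points by definition.

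Your minimization idea can replace that last descent. Let $C$ be a non-empty clopen piece of a regular variety $V_{\ell,m}(g_1,\dots,g_k)=V^{\reg}_{\ell,m}(g_1,\dots,g_k)$ whose projection lies in $X$. Then $C$ is closed in $U_{\ell,m}$, so your proper function $\phi_c$ attains a minimum on $C$, necessarily at a regular point. The square system $g_r=0$, $\nabla\phi_c=\sum_r\lambda_r\nabla g_r$ (with denominators cleared) is regular there exactly when the critical point is nondegenerate. O-minimal Sard guarantees this for $c$ outside a definable set of dimension $<m$.

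Two smaller points on that step. First, $\Q$ is not dense in a non-archimedean $N$, so you should choose $c\in\Q^m$ by a fibre-dimension argument (a definable set of dimension $<m$ cannot contain $\Q^m$), not by density. Second, your fallback of turning the coefficients into new variables does not yield a square regular system.
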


Similar results can be found in \cite[Section 4]{wilkieTheoryRealExponential1989} and \cite[Theorem 5.1]{Wilkie1996}, but they are stated under the assumption that the ambient structure is elementarily equivalent to an expansion of the reals. The proposition can however be proved by induction on the dimension using the next lemma. We say that an $\texp$-variety $V_{\ell,n}(f_1,\dots,f_k)$ is \emph{regular} if it coincides with $V_{\ell,n}^{\reg}(f_1,\dots,f_k)$, and we say a definable set $Z$  is a  \emph{regular component} of complexity $\leq \ell$ if there is an $(\ell,n)$-$\E$-variety $V_{\ell,n}(f_1,\dots,f_k)$ such that $Z$ is a clopen subset of $V_{\ell,n}^{\reg}(f_1,\dots,f_k)$. We then call $Z$ a regular component of the variety $V_{\ell,n}(f_1,\dots,f_k)$. These notions depend not only on the variety, but on its presentation as the set of zeros of a given system. 

\begin{lem}[{\cite[Theorem 33, Remark 42, Theorem 44]{Servi2008}}] \label{lem:servi3344}
    Let $N \models \Tres$, and let $V=V_{\ell,n}(f_1,\dots,f_k)$ be an $(\ell,n)$-$\E$-variety over a subfield $M \subseteq N$. 
    \begin{enumerate}
        \item $V$ can be written as a finite union of regular components of $(\ell,n)$-$\texp$-varieties over $M$. \label{33441}
    
        \item The set of regular points of $V$ is the projection of a finite union of regular $\texp$-varieties over $M$ of complexity $\leq \ell$ and dimension $\leq \dim V$.\label{33440}
        
        \item If $V$ is regular and $\dim(V)>0$, there are $\E$-polynomials $h_1,\dots,h_m$ over $M$ of complexity $\leq \ell$ such that for every non-empty clopen definable subset $S \subseteq V$ there is $i \in \{1,\dots,m\}$ such that $S \cap V^{\reg}_{n,\ell}(f_1,\dots,f_k,h_i) \neq \emptyset$. \label{33442}
    \end{enumerate}
\end{lem}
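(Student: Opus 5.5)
We prove the three parts in the order (\ref{33440}), (\ref{33441}), (\ref{33442}), each by reducing to o-minimal dimension counting. The observation used throughout is the following. For $(\ell,n)$-$\E$-polynomials $f$ over $M$, every partial derivative $\partial f/\partial x_j$ is again an $(\ell,n)$-$\E$-polynomial over $M$: formally $\partial \texp(x_j)/\partial x_j = \texp(x_j)$, and Proposition \ref{prop:exp-finite-2} shows that this formal derivative agrees with the actual one on $U_{\ell,n}$. Consequently Jacobian minors, and polynomial combinations of the $f_i$ and of their derivatives, stay in the class of $\E$-polynomials of complexity $\leq \ell$ over $M$.

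\emph{Part (\ref{33440}).} Regular points of $V$ are the points where some $k\times k$ minor $\Delta_I$ of $J(f_1,\dots,f_k)$ does not vanish. For each choice $I$ of $k$ columns, add a new unrestricted variable $y$ and the equation $y\,\Delta_I(x)-1=0$; this keeps complexity $\leq \ell$, since the new variable is placed after the restricted ones. The Jacobian of $(f_1,\dots,f_k,y\Delta_I-1)$ in $(x,y)$ has a $(k+1)\times(k+1)$ minor equal, up to sign, to $\Delta_I\cdot\Delta_I$, which is nonzero wherever $y\Delta_I=1$. Hence each such variety is regular, and it projects bijectively onto $V\cap\{\Delta_I\neq 0\}$, so its dimension is $\leq\dim V$. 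Taking the union over all $I$ gives (\ref{33440}).

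\emph{Part (\ref{33441}).} We argue by induction on $\dim V$. Put $g=\sum_i f_i^2$, so that $V=V_{\ell,n}(g)$. At each $x\in V$ some partial derivative of $g$ does not vanish, since otherwise, by Taylor's formula (Proposition \ref{prop:Taylor}) applied along lines and o-minimality, $g$ would vanish near $x$; in that case $V$ contains an open set and we are already done. Choose such a derivative $\partial^\alpha g$ of minimal order. Then $x$ is a regular zero of some derivative $h$ of order $|\alpha|-1$, and $V$ is locally contained in $V^{\reg}(h)$. Since there are finitely many derivatives up to any order, o-minimal cell decomposition lets us split $V$ into two pieces. The first piece consists of points where $V$ is locally a clopen part of the regular zero set of a system built from $f_1,\dots,f_k$ and finitely many such $h$'s, using Remark \ref{rem:meet-transversally} to add equations one at a time. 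The second piece is a subset of strictly smaller dimension, cut out by adding the vanishing of appropriate minors as equations, to which the induction hypothesis applies. The bookkeeping needed to make the first piece literally a clopen subset of $V^{\reg}$ of a single presented system is routine but tedious; I would follow Servi's proof.

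\emph{Part (\ref{33442}).} This is the main obstacle. Let $\psi=\prod_{j\le\ell}(1-x_j^2)^{-1}\cdot(1+\sum_{j>\ell}x_j^2)$. This function tends to $+\infty$ at the boundary of $U_{\ell,n}$ and at infinity, and it is a quotient of $\E$-polynomials. Any nonempty clopen $S\subseteq V$ is closed in $U_{\ell,n}$, so by definable completeness $\psi$ attains a minimum on $S$. At a minimum point, $d\psi$ lies in the span of the $df_i$, so every $(k+1)\times(k+1)$ minor of $J(f_1,\dots,f_k,\psi)$ vanishes there. After clearing denominators these minors become $\E$-polynomials $h$ of complexity $\leq\ell$. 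The difficulty is to obtain regularity of $(f,h)$ at some point of $S$, i.e.\ nondegeneracy of the critical point. I would first try perturbing $\psi$ to $\psi_c=\psi\cdot\prod_j(1+c_jx_j)$, or by adding linear terms $\sum_j c_jx_j$ with $c$ ranging over a fixed finite set of rationals. I would then use a definable Sard/dimension argument (Proposition \ref{prop:dimension}) to show that the parameters $c$ giving a degenerate minimum on some clopen piece form a set of small dimension, uniformly in $S$. If this works, finitely many choices of $c$ and of minors give the required list $h_1,\dots,h_m$, independent of $S$. Making this last step uniform in $S$ is the point I expect to need the most care.
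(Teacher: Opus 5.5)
Part (2) of your proposal is fine. On the columns $I\cup\{y\}$ the minor of $J(f_1,\dots,f_k,y\Delta_I-1)$ is exactly $\Delta_I^2$, and the lifted variety is the graph of $1/\Delta_I$ over $V\cap\{\Delta_I\neq 0\}$.

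\textbf{Part (1): flat points.} The genuine gap is here. You dismiss points at which all partial derivatives of $g=\sum_i f_i^2$ vanish ``by Taylor's formula and o-minimality''. This does not work. In a non-archimedean model the Lagrange remainder only gives $|g(y)|\le C_r|y-x|^r$ for each standard $r$, i.e.\ flatness. And o-minimality does not exclude flat definable $C^\infty$ functions (e.g.\ $e^{-1/x^2}$ in $\R_{\exp}$).

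The missing ingredient is what the paper singles out when reducing the lemma to Servi: the ring of $(\ell,n)$-$\texp$-polynomial functions over $M$ is Noetherian (a quotient of a polynomial ring) and closed under differentiation. Noetherianity gives $N$ such that every $\partial^\gamma g$ lies in the ideal generated by the $\partial^\gamma g$ with $|\gamma|\le N$. Hence $G=(\partial^\gamma g)_{|\gamma|\le N}$ satisfies a linear system $\partial_jG=A_jG$ with coefficients in the ring. Restrict to segments in the convex set $U_{\ell,n}$ and use uniqueness for linear ODEs in definably complete fields (of the kind behind Proposition~\ref{prop:Fratarcangeli}). Then if $G$ vanishes at one point, it vanishes on all of $U_{\ell,n}$. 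This global vanishing is what you need; merely ``$V$ contains an open set'' does not make that open set a regular component. The same $N$ also bounds the vanishing order at points of $V$, which is what makes your decomposition finite.

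\textbf{Part (1): local containment.} Separately, your claim that $V$ is locally contained in $V^{\reg}(h)$ for a derivative $h$ of order $|\alpha|-1$ is false. Take $g=(x_1^2+x_2^2)(x_1-x_2^2)^2$. Its vanishing order at $0$ is $4$, yet none of its third-order partials vanishes identically on the parabola $V$ near $0$; for instance $\partial_1^3 g=12(x_1-x_2^2)+12x_1$. Only the points of a fixed exact vanishing order lie in such sets. Handling the remaining strata is precisely the induction you defer to Servi.

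\textbf{Part (3).} This is left open, as you acknowledge. Minimizing a proper function $\psi$ over $\Q$ is the right kind of idea, since it produces equations over $M$. Uniformity in $S$ is not really an issue: by o-minimality $V$ has finitely many definably connected components, and each nonempty clopen $S$ contains one. What is missing is nondegeneracy. A plausible way to close it:
\begin{enumerate}
\item By a definable Sard lemma, $\psi_c=\psi+c\cdot x$ is Morse on $V$ for all $c$ outside a definable set of dimension $<n$. Such a set cannot contain $\Q^n$, so a rational $c$ works.
\item Let $\mu_I$ be the $(k+1)$-minor of $J(f_1,\dots,f_k,\psi_c)$ on the columns $I$. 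Let $p$ be a nondegenerate minimum of $\psi_c$ on a component, with $\det J_{J_0}(f)(p)\neq0$.
\item In the graph coordinates $y$ determined by $J_0$, the Schur complement identity gives $\mu_{J_0\cup\{j\}}=\pm\det J_{J_0}(f)\cdot\partial_{y_j}(\psi_c|_V)$ on $V$. Hence $d(\mu_{J_0\cup\{j\}}|_V)_p\neq 0$.
\item All these minors, cleared of denominators, can serve as $h_1,\dots,h_m$.
\end{enumerate}

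As written, neither (1) nor (3) is established. The paper sidesteps all of this by checking the hypotheses of Servi's theorems: Noetherianity, closure under differentiation, finiteness of $0$-dimensional varieties by o-minimality, and a semialgebraic change of variables removing the discontinuity of $\texp$ at $\pm1$.
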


Lemma \ref{lem:servi3344} is a special case of results in \cite{Servi2008}, observing that the ring of $(\ell,n)$-$\E$-polynomials is Noetherian and closed under differentiation and that the $0$-dimensional $(\ell,n)$-$\E$-varieties are finite by o-minimality.
The only minor issue is that the results of \cite{Servi2008} apply to smooth functions on a field, while our $\E$ is discontinuous at $-1$ and $1$. To handle this we can compose all the functions with the semialgebraic diffeomorphism from $M^n$ to $(-1,1)^\ell \times M^{n-\ell}$ given by $x_i\mapsto x_i/\sqrt{1+x_i^2}$ for $i=1, \ldots, \ell$.    

 \begin{proof}[Proof of Proposition \ref{prop:servi}]
  By Lemma \ref{lem:servi3344}(\ref{33441}) we can write $V$ as the union of finitely many regular components of $(\ell,n)$-$\E$-varieties, so it suffices to show that every regular component $S$ of positive dimension contains the projection of a regular component of strictly smaller dimension defined over the same parameters and of the same complexity. To this aim, write $S$ as a clopen subset of the set of regular points of some $(\ell,n)$-$\texp$-variety $V(G)$, where $G=(g_1,\dots,g_k)$. By Lemma \ref{lem:servi3344}(\ref{33440}), $V^{\reg}(G)$ is the projection of a finite union of regular $\texp$-varieties of complexity $\leq \ell$. One of these regular varieties, say $V_{\ell,m}(H)=V_{\ell,m}^{\reg}(H)$, must then contain a definably connected component $C$ whose projection is contained in  $S$. By Lemma \ref{lem:servi3344}(\ref{33442}), $C$ intersects a regular component of an $\texp$-variety of the same complexity and lower dimension, defined over the same parameters.
 \end{proof}

\begin{defn}\label{def:gexpnl} Let $N\models \Tres$, $n,\ell \in \N$. Define:
$$G_{\exp}^{n,\ell}:=\left\{(x_1,\dots, x_n, y_1, \ldots, y_\ell) \in (-1,1)^\ell \times N^{n-\ell} \times (N^\times)^\ell \;\middle| \; \bigwedge_{i=1}^\ell y_i = \texp(x_i) \right\}$$
$$G_{\exp}^{n,\ell,-}:=\left\{(x_1,\dots, x_n, y_1, \ldots, y_\ell) \in (-1,1)^\ell \times N^{n-\ell} \times (N^\times)^\ell \;\middle| \; \bigwedge_{i=1}^{\ell-1} y_i = \texp(x_i) \right\}$$
Note that $\dim(G_{\exp}^{n,\ell}) = n$ and $\dim(G_{\exp}^{n,\ell,-}) = n+1$.
\end{defn}

\begin{lem}\label{lem:exists-variety} Let \replaced{$N\models \Tres$ and let $M\subseteq N$ be a subfield. Let }{$M \subset N$ be models of $\Tres$ and let } $(\alpha,\beta) \in N^n$ be an $(\ell,n)$-Khovanskii point over $M$. Then there is an algebraic variety $V$ defined over $M$ of dimension $\ell$ such that $V$ and $G_{\exp}^{n,\ell}$ meet transversally at $(\alpha,\beta,\texp(\alpha))\in N^{n+\ell}$. In particular, the transcendence degree $\td(\alpha,\beta, \texp(\alpha)/M)$ is at most $\ell$.
\end{lem}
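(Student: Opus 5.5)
The natural candidate is the algebraic variety cut out by the polynomials themselves. Let $(\alpha,\beta)$ satisfy the Khovanskii system $f_i = F_{p_i}$, $i=1,\dots,n$, with $p_i(\bar x,\bar y)\in M[\bar x,\bar y]$, $\bar x=(x_1,\dots,x_n)$, $\bar y=(y_1,\dots,y_\ell)$. Set
\[
W = V_{n+\ell}(p_1,\dots,p_n)\subseteq N^{n+\ell},
\]
the common zero set of the $p_i$ regarded as polynomials in $n+\ell$ variables. Put $P=(\alpha,\beta,\texp(\alpha))$. Then $P\in W\cap G_{\exp}^{n,\ell}$, since $f_i(\alpha,\beta)=p_i(P)=0$; note also that $\texp(\alpha_i)\neq 0$. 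The plan is to show three things: $P$ is a regular point of $p=(p_1,\dots,p_n)$; $W$ and $G_{\exp}^{n,\ell}$ meet transversally at $P$; and some irreducible component $V$ of $W$ through $P$, defined over $M$ (or over its real closure), has dimension $\ell$.

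For transversality, write $G_{\exp}^{n,\ell}$ locally near $P$ as the zero set of $g_j(\bar x,\bar y)=y_j-\texp(x_j)$, $j=1,\dots,\ell$, on $U_{\ell,n}\times N^\ell$. These $g_j$ are regular, since $\partial g_j/\partial y_j=1$. Remark \ref{rem:meet-transversally} reduces transversality to showing that the $(n+\ell)\times(n+\ell)$ Jacobian of $(p,g)$ at $P$ is invertible. Column-reduce this Jacobian using the $g$-rows: add $\texp(\alpha_j)$ times the $y_j$-column to the $x_j$-column. The $g$-rows become $(0\mid I_\ell)$, and by the chain rule with $\texp'=\texp$ the $x$-block of the $p$-rows becomes exactly $J(f_1,\dots,f_n)(\alpha,\beta)$. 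The determinant is therefore $\pm\det J(f)(\alpha,\beta)\neq 0$. In particular $dp_P$ has rank $n$, so $P$ is a regular point of $p$, and $W$ is a definable $C^1$-manifold of dimension $\ell$ near $P$ with $T_P(W)=\ker dp_P$.

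The remaining step is to pass from this local semialgebraic picture to an algebraic variety of dimension exactly $\ell$ over $M$; I expect this to be the main obstacle, though it is standard. Work in the polynomial ring over $M$ and take an irreducible component $V$ of the Zariski closure of $W$ containing $P$. Its dimension is at least $n+\ell-n=\ell$ by Krull's principal ideal theorem. For the upper bound, the Jacobian criterion at $P$ gives $\dim T_P V\le \dim\ker dp_P=\ell$, and $\dim V\le\dim T_PV$. Since $P$ is a regular point, it lies on a unique component, which is then nonsingular of dimension $\ell$ at $P$, with $T_P(V)=T_P(W)$. Transversality of $V$ with $G^{n,\ell}_{\exp}$ at $P$ follows from the previous paragraph.

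For the transcendence degree, $P$ is a point of $V$, an irreducible variety over $M$ of dimension $\ell$, so $\td(P/M)\le\dim V=\ell$. Alternatively, this follows directly: $\td(P/M)$ is at most the dimension of the $M$-locus of $P$, which lies in $W$ and hence, by the regularity at $P$, has dimension at most $\ell$.
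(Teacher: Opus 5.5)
Your proof is correct. The transversality step is the paper's own: your column reduction just makes explicit why $(\alpha,\beta,\texp(\alpha))$ is a regular zero of the enlarged system, which the paper asserts. Where you differ is in cutting the dimension down to $\ell$.

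The paper stays semialgebraic. Near the point, $V_{n+\ell}(p_1,\dots,p_n)$ is an $\ell$-dimensional manifold, so it contains an open semialgebraic $U$ of dimension $\ell$ around the point. The Zariski closure of $U$ has dimension $\ell$ by \cite[Proposition 2.8.2]{bochnak2013real}, and since $U$ is open in the zero set, the tangent space and transversality are inherited for free.

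You instead take the $M$-irreducible component through $P$, a minimal prime of $(p_1,\dots,p_n)$ in $M[\bar x,\bar y]$. You then squeeze its dimension between Krull's bound and the tangent-space bound. This gives a $V$ that is visibly defined over $M$, so the bound on $\td(\alpha,\beta,\texp(\alpha)/M)$ is immediate. In the paper's version $U$ is a priori only defined over $N$, and one must still pass to an $M$-Zariski closure.

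The cost is that two of your one-line claims carry the real content and should be justified.
\begin{enumerate}
\item The inequality $\dim V\le\dim T_PV$ at a point $P$ that need not be $M$-rational. It holds because, in characteristic $0$, a Jacobian matrix of generators of the prime of $V$ has rank $n+\ell-\dim V$ at the generic point. Hence its larger minors lie in that prime and vanish at $P$.
\item The uniqueness of the component through $P$. This is exactly what makes $V(N)$ agree with the full zero set near $P$. You need that agreement because transversality in the paper refers to tangent spaces of point sets, defined via definable curves. Uniqueness follows since the local ring of $N[\bar x,\bar y]/(p_1,\dots,p_n)$ at $P$ is regular, hence a domain. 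Going down for the flat extension $M[\bar x,\bar y]\to N[\bar x,\bar y]$ then transfers uniqueness to components over $M$.
\end{enumerate}
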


\begin{proof}
     If the $\texp$-Khovanskii point $(\alpha,\beta)$ is a regular solution of the system of equations 
$$\bigwedge_{i=1}^n F_{p_i}(x_1,\dots,x_n) = 0$$
  consider the variety $V=V_{n+\ell}(p_1,\dots,p_n)$ defined by the polynomials $p_i(\bar x, \bar y)$. Observe that $(\alpha,\beta, \texp(\alpha))$ is a regular solution of the system consisting of the algebraic equations $$p_i(x_1, \ldots, x_n, y_1, \ldots, y_\ell) = 0$$ ($i=1, \ldots, n$) together with the equations $\texp(x_i) = y_i$ ($i = 1, \ldots, \ell$), hence $V$ and $G_{\exp}^{n,\ell}$ meet transversally at $(\alpha,\beta,\texp(\alpha))$ by Remark \ref{rem:meet-transversally}. A priori, $V$ may have dimension greater than $\ell$, but it has dimension $\ell$ around the point $(\alpha,\beta,\texp(\alpha))$, so there is an open semialgebraic subset $U \subset V$ of dimension $\ell$ which contains $(\alpha,\beta,\texp(\alpha))$. By \cite[Proposition 2.8.2]{bochnak2013real} the Zariski closure $U^{\text{Zar}} \subseteq V$ of $U$ has dimension $\ell$, so after possibly replacing $V$ with $U^{\text{Zar}}$ we conclude.
\end{proof}

\section{Exponential algebraicity}\label{sec:exp-alg}
The notions of $\E$-polynomial, $\E$-Khovanskii point, and Khovanskii system have obvious analogues in the context of partial exponential fields (Definition \ref{defn:exponential field}), which have been thoroughly studied for example in \cite{Kirby2010}. The notion of \emph{exponential algebraicity} from \cite{Kirby2010} naturally adapts to our context as well.

\begin{defn}
    Let $M$ be a restricted exponential field, $C \subseteq M$. We say a point $a \in M$ is $\texp$-algebraic over $C$ if $a$ is a coordinate of an $\E$-Khovanskii point over the restricted exponential subfield of $M$ generated by $C$.

    We write $\ecl^M(C)$ for the set of points in $M$ which are $\texp$-algebraic over $C$. A subset $C \subseteq M$ is $\ecl$-closed if $\ecl^M(C)=C$.
\end{defn}

The use of the notation $\ecl$, borrowed from \cite{Kirby2010}, is justified by the following proposition. Note that definable completeness is not assumed. 

\begin{prop}\label{prop:pregeometry}    
    Let $M$ be a restricted exponential field and let $\Exp$ be as in Proposition \ref{prop:exp-finite}. The operator $\ecl^M$ coincides with exponential-algebraic closure in the partial exponential field $(M,\Exp)$ in the sense of \cite{Kirby2010}. In particular, it is a pregeometry.
\end{prop}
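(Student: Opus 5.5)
We need to recall Kirby's definition. In \cite{Kirby2010}, for a partial exponential field $(F,\exp)$ with domain $D(F)$ and $C\subseteq F$, an element $a$ is exponentially algebraic over $C$ if there are $n$, $\exp$-polynomials $f_1,\dots,f_n\in C[X_1,\dots,X_n,e^{X_1},\dots,e^{X_n}]$ (with coefficients in the partial exponential subfield generated by $C$) and $a_1=a,\dots,a_n\in D(F)$ such that $f_i(\bar a)=0$ for all $i$ and the formal Jacobian $\det(\partial f_i/\partial X_j)(\bar a)\neq 0$. Kirby proves that this closure operator is a pregeometry on any partial exponential field (his Theorem 1.1 and Section 3). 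So the pregeometry statement will follow at once from the identification, and the plan is to prove the two inclusions between $\ecl^M(C)$ and Kirby's closure $\ecl^{(M,\Exp)}(C)$.

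First I will check that the generating substructures agree. The restricted exponential subfield generated by $C$ and the partial exponential subfield of $(M,\Exp)$ generated by $C$ have the same field part. Values $\texp(c)$ with $c\in(-1,1)$ are values of $\Exp$. Conversely, $\Exp(c)=\texp(c/n)^n$ for $c\in(-n,n)$, and $c/n$ lies in the field generated. This works because $\Fin(M)=\dom\Exp$ and, by Proposition~\ref{prop:exp-finite}, $\Exp$ is built from $\texp$ by powers.

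For $\ecl^M(C)\subseteq$ Kirby's closure: an $(\ell,n)$-Khovanskii point $(\alpha,\beta)$ involves exponentials only of the first $\ell$ coordinates, and these lie in $(-1,1)\subseteq\dom\Exp$, where $\Exp=\texp$. The coordinates $\beta$ need not be in $\Fin(M)$, so I will eliminate them: Kirby's definition allows polynomials in $X$ and $e^{X}$ where only some variables are exponentiated. If his framework requires all variables in the domain, I will instead replace each unrestricted $\beta_j$ by a new variable $x$ with equation $y\cdot\Exp$-free, or else use the pure field-algebraic closure, which is contained in ecl (Kirby shows $\operatorname{acl}\subseteq\ecl$). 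Concretely, $\alpha$ is ecl-algebraic by taking the system in $\alpha$ after eliminating $\beta$ via the implicit function structure. Then $\beta$ is field-algebraic over $C\alpha\texp(\alpha)$ by Lemma~\ref{lem:exists-variety}-type reasoning, since the Jacobian block in $\beta$ combined with regularity gives an algebraic regular system. This bookkeeping is the main obstacle, and I would handle it by matching Kirby's exact definition, which does allow mixed systems.

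For the reverse inclusion: given Kirby's Khovanskii system with solution $\bar a\in\Fin(M)^n$, choose $N\in\N$ with all $|a_i|<N$. Substitute $x_i=Nu_i$ and $e^{x_i}=\texp(u_i)^N$. The new system in $u$ is an $(n,n)$-$\E$-system over the same field, and its Jacobian is a nonzero multiple ($N^n$ times the chain-rule factor) of the old one. The formal derivative rule $\partial\texp(u)/\partial u=\texp(u)$ is consistent with $\partial\Exp(x)/\partial x=\Exp(x)$ without needing definable completeness, since everything is formal. Hence $a_i/N$, and thus $a_i$ (by rescaling the first coordinate with the linear equation $x=Nu$ added), is in $\ecl^M(C)$.
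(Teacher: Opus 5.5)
Your proposal is correct and follows the paper's route. The paper's proof likewise observes that, via $\Exp(x)=\texp(x/n)^n$, a Khovanskii system for $(M,\Exp)$ can be rewritten as one for $\texp$ (this is your substitution $x_i=Nu_i$), leaves the converse inclusion implicit as immediate from $\Exp=\texp$ on $(-1,1)$, and quotes \cite[Theorem 1.1]{Kirby2010} for the pregeometry. One thing to tidy: drop your fallback of eliminating $\beta$ ``via the implicit function structure''. It is not available in a bare restricted exponential field, since no definable completeness is assumed here. It is also not needed: only the first $\ell$ coordinates are exponentiated, they lie in $(-1,1)$, and so the restricted system can be read directly as a system in $(M,\Exp)$.
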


\begin{proof} 
    The fact that every solution of a Khovanskii system in the partial exponential field $(M,\Exp)$ is also a solution of a Khovanskii system in the restricted exponential field $M$ follows easily from the definition of $\Exp$. The fact that $\ecl^M$ is a pregeometry is then \cite[Theorem 1.1]{Kirby2010}.
\end{proof}

\begin{defn} 
For any subsets $X,Y$ of a restricted exponential field $\M$, 
\begin{itemize}
\item 
$\etd(X/Y)$ is the maximal cardinality of a subset of $X$ that is $\ecl$-independent over $Y$.  
\item $\td(X/Y)$ is the transcendence degree of the field extension
$\Q(X,Y)/\Q(X)$
\item $\ldim_\Q(X/Y)$ is the dimension of $\spn_{\Q}(X,Y)/\spn_{\Q}(Y)$
as a $\Q$-vector space.
\end{itemize} 
\end{defn}

\begin{thm}[Predimension inequality] \label{thm:kirby} Let $C$ be a $\ecl$-closed subset of a restricted exponential field $\M$. Then, \added{for every $\bar x$ in $(-1,1)$, we have}
		\[ \td(\bar x, \texp(\bar x)/C ) - \ldim_\Q (\bar x / C) \geq \etd(\bar x /C). \]
\end{thm}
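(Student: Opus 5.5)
The plan is to deduce the inequality from Kirby's predimension inequality for partial exponential fields, via Proposition \ref{prop:pregeometry}. The key identification is between the restricted exponential field $\M$ and the partial exponential field $(\M,\Exp)$ of Proposition \ref{prop:exp-finite}. First I will observe that, since $\ecl^M$ coincides with Kirby's exponential-algebraic closure in $(\M,\Exp)$, the set $C$ is $\ecl$-closed in Kirby's sense as well. In particular $C$ is a $\Q$-linear subspace contained in the domain of $\Exp$ wherever relevant. Moreover $C$ is closed under $\Exp$ on $C\cap\Fin(M)$, because for $c\in C\cap \Fin(M)$ the pair $(c,\Exp(c))$ is a trivial Khovanskii point over $C$ (use the equation $y-\Exp(c')$ with $c'=c/n$, iterated). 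Also $\etd$ computed with $\ecl^M$ equals Kirby's $\etd$.

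Next I will apply \cite[Theorem 1.3 / Lemma on predimension]{Kirby2010}. For a partial exponential field $F$ and an $\ecl$-closed subfield $C$, and any tuple $\bar x$ in $D(F)$, this gives
\[
\td(\bar x,\Exp(\bar x)/C)-\ldim_\Q(\bar x/C)\geq \etd(\bar x/C).
\]
This is the standard statement that for ecl-closed $C$ the predimension $\delta(\bar x/C)$ is at least $\etd(\bar x/C)$. It comes from Kirby's result that $\etd(\bar x/C)=\min\{\delta(\bar x\bar y/C)\}$ over $\bar y$ in the domain, combined with the fact that adding further elements cannot lower the predimension below $\etd$. For $\bar x\in(-1,1)$ we have $\Exp(\bar x)=\texp(\bar x)$, so the displayed inequality is exactly the claimed one.

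The main obstacle is making sure Kirby's hypotheses are met. Kirby's framework requires the domain $D(F)$ to be a $\Q$-vector space, which holds here since $D=\Fin(M)$. His theorem is typically phrased with $C$ an ecl-closed subset, or with a base where the Schanuel property holds relative to $C$, and I would need to check that the version used needs no extra assumption such as $\ker\Exp$ being cyclic. Here $\Exp$ is injective on $\Fin(M)$, since it is increasing, so its kernel is trivial and no kernel issues arise.

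If the citation does not directly give the inequality, I would prove it from the pregeometry. Suppose the inequality fails. Choose $\bar x$ minimal, and use the Jacobian/Khovanskii criterion in \cite{Kirby2010}: a tuple of predimension $<\etd$ over an ecl-closed $C$ contains a subtuple that is a Khovanskii point over $C$ together with extra coordinates. That subtuple would lie in $C$, and an induction on $\ldim_\Q(\bar x/C)$ then gives the contradiction.
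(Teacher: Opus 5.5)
Your proposal follows the paper's proof: it uses Proposition \ref{prop:pregeometry} to identify $\ecl^M$ with Kirby's closure in the partial exponential field $(M,\Exp)$, applies Kirby's predimension inequality (the paper cites \cite[Theorem 1.2]{Kirby2010} and notes that the proof carries over unchanged from total to partial exponential fields), and uses $\Exp=\texp$ on $(-1,1)$. One correction: in a bare restricted exponential field $\Exp$ need not be increasing, so your injectivity argument is unjustified. It is also unnecessary, because Kirby's inequality has no kernel hypothesis, and kernel elements lie in $\ecl(\emptyset)\subseteq C$ anyway.
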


\begin{proof}
    The corresponding result for partial exponential fields is \cite[Theorem 1.2]{Kirby2010}, so the theorem follows by Proposition \ref{prop:pregeometry}.
\end{proof}

\added{
Note that Theorem 1.2 in \cite{Kirby2010} is stated for simplicity for total exponential fields, but the proof goes through for partial exponential fields as well with no modifications. 
}

\section{Germs of polynomially bounded functions}\label{sec:germs}

Let $M$ be an expansion of an ordered field, and let $f:M\to M$ be a definable function. We recall that $f$ is said to be {\em polynomially bounded} if there exists $n\in \N$ such that $|f(x)| \leq x^n$ for all sufficiently large $x \in M$. 
If every definable function $f:M\to M$ is polynomially bounded, then the structure $M$ is called polynomially bounded. A theory $T$ is polynomially bounded if every model of $T$ is polynomially bounded. 
The theory $T_{an}$ studied in \cite{Denef1988,VandenDries1986,Dries1994} is polynomially bounded and complete, hence so is its reduct $T_{\texp}$. We cannot however deduce that the subtheory $\Tres \subseteq T_{\texp}$ is polynomially bounded without knowing whether it is complete. This justifies the following definition. 

\begin{defn} \label{defn:germs} Let $M\models \Tres$. 
\begin{itemize}
    \item Given a definable function $f:M\to M$, let $\germ(f)$ be the germ of $f$ at $+\infty$ and let $\mathcal G_M$ be the family of all such germs. Since $M$ is o-minimal, $\mathcal G_M$ is an ordered field (with the natural addition and multiplication of germs). 
\item 
Consider the convex subring $\mathcal{O}  \subseteq \mathcal G_M$ defined by 
$$
\mathcal{O} = \{\germ(f) \in \mathcal G_M \mid f \text{ is polynomially bounded} \}. 
$$
Let $\mathfrak m \subseteq \mathcal O$ be the maximal ideal and $R= \mathcal O/\mathfrak m$ the residue field. 
\item We denote by  
$$[f] = \germ(f)+\mathfrak m \in R$$
the class of $\germ(f)$ modulo $\mathfrak m$.
\end{itemize}
\end{defn}
Clearly if $M$ is polynomially bounded, then $\mathcal O = \mathcal G_M$ and $\mathfrak{m} = (0)$. 

\begin{rem} Let $M\models \Tres$ and let $R = \mathcal O/\mathfrak m$ be as above. 
\begin{itemize}
    \item 
$\germ(f) \in \mathfrak m$ if and only if $|f(t)|<t^{-n}$ for all $n\in \N$ and all sufficiently large $t\in M$, namely $f$ is {\em flat} at $+\infty$. 
\item $\mathcal G_M$ is an elementary extension of $M$ with a restricted exponential function defined by $\texp(\germ(f)) = \germ(\texp(f))$. 
\end{itemize}
For the second point, note that, since $M$ is o-minimal, $\mathcal G_M$ can be naturally identified with the definable closure of $M\cup \{t\}$ inside an elementary extension $N\succ M$ with an element $t>M$. The identification sends $\germ(f)\in \mathcal G_M$ to $f(t)\in N$.   
\end{rem}

\begin{prop}\label{prop:hopital}
Let $f,g:M \to M$ be definable functions not eventually constant with  $\lim_{t\to \infty}f(t) = \lim_{t\to \infty}g(t) = 0$. Then 
$$\lim_{t\to +\infty}f(t)/g(t) = 0 \iff \lim_{t\to +\infty}f'(t)/g'(t) = 0.$$
\end{prop}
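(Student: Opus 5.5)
The plan is to use o-minimality of $M$ (Theorem \ref{thm:FSH}) to make $f$ and $g$ eventually smooth, strictly monotone and of constant sign, and then to replace the usual Cauchy mean value argument by the mean value theorem in definably complete fields. Both directions reduce to the L'Hôpital rule in the form: if $f'(t)/g'(t)\to L$ then $f(t)/g(t)\to L$, with $L=0$ or $L=\pm\infty$.

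First, the eventual behaviour. By monotonicity, there is $a$ such that on $(a,+\infty)$ the functions $f,g$ are $C^1$ and strictly monotone; they cannot be eventually constant, and since their limits are $0$ they are eventually nonzero. Hence $g'$ has constant sign and is eventually nonzero. To see this, note that the zero set of $g'$ is a finite union of points and intervals, and on an interval $g$ would be constant. So $f'/g'$ is eventually defined, and by o-minimality every limit below exists in $M\cup\{\pm\infty\}$.

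Second, the Cauchy mean value theorem. For $a<s<t$, apply Rolle's theorem (valid in definably complete fields) to the definable function
\[
h(x)=f(x)\bigl(g(t)-g(s)\bigr)-g(x)\bigl(f(t)-f(s)\bigr).
\]
This gives $\xi\in(s,t)$ with
\[
\frac{f(t)-f(s)}{g(t)-g(s)}=\frac{f'(\xi)}{g'(\xi)}.
\]
The denominator is nonzero by strict monotonicity.

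For the direction ($\Leftarrow$), assume $|f'/g'|<\eps$ on $(b,+\infty)$. For $b<s<t$ we get $|f(t)-f(s)|\le\eps|g(t)-g(s)|$. We then let $t\to+\infty$; this limit exists in the structure, so no compactness is needed. The result is $|f(s)|\le\eps|g(s)|$ for all $s>b$, hence $f/g\to 0$.

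For the direction ($\Rightarrow$), suppose $f/g\to0$ but $f'/g'\not\to 0$. By o-minimality, $\lim f'/g'$ exists as some $L\in M\cup\{\pm\infty\}$ with $L\neq0$. If $L$ is finite, the same argument as above, with $|f'/g'-L|<\eps$, gives $f/g\to L\neq 0$, a contradiction. If $L=\pm\infty$, then $g'/f'\to 0$ (eventually $f'\neq0$). Applying ($\Leftarrow$) with the roles of $f$ and $g$ swapped gives $g/f\to0$, contradicting $f/g\to0$.

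The main obstacle is the first step, which guarantees eventual differentiability and nonvanishing of $g'$. This is where o-minimality (the monotonicity theorem) is essential. The remaining steps are routine limit arguments, done definably.
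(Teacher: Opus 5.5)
Your argument is correct, but it takes a different route from the paper.

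The paper never does any calculus here. It passes to the field $\mathcal G_M$ of germs at $+\infty$ with the derivation $D(\germ(f))=\germ(f')$. It observes, using o-minimality, that this is an $H$-field with constant field $M$. It then quotes Rosenlicht's result, in the form given by Aschenbrenner and van den Dries. Let $v$ be the valuation whose ring is the convex hull of the constants. For $a,b$ with $v(a),v(b)\neq 0$, one has $v(a)<v(b)\iff v(a')<v(b')$. Since $f,g\to 0$ means $v(f),v(g)>0$, and $f/g\to 0$ means $v(f)>v(g)$, this is exactly the statement.

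You instead prove an o-minimal L'H\^opital rule by hand:
\begin{itemize}
\item The monotonicity theorem gives eventual $C^1$-ness, strict monotonicity, and eventual nonvanishing of $g'$.
\item Rolle's theorem, valid in o-minimal (indeed definably complete) expansions of fields, gives the Cauchy mean value theorem.
\item Letting $t\to+\infty$ inside $M$ yields $f'/g'\to L\Rightarrow f/g\to L$.
\item The converse follows from the existence of $\lim f'/g'$ in $M\cup\{\pm\infty\}$, plus the swap of $f$ and $g$ when $L=\pm\infty$.
\end{itemize}

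Your approach is self-contained and elementary, and it actually proves the full L'H\^opital rule for an arbitrary limit $L$. The paper's approach is a two-line reduction that fits the germ-and-derivation viewpoint used immediately afterwards in Lemma~\ref{lem:germs}. It relocates the analytic content into the verification of the $H$-field axioms, which needs the same kind of o-minimal calculus (for instance, that $f\to+\infty$ forces $f'>0$ eventually). The rest is carried by Rosenlicht's valuation-theoretic lemma.
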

\begin{proof} 
Consider the derivation $D:\mathcal G_M\to \mathcal G_M$ given by $D(\germ(f)) = \germ(f')$, where $f'= \frac {df(t)}{dt}$ (this makes sense since for every  definable function $f:M\to M$ the derivative $f'(t)$ exists for all sufficiently large $t$). The field of constants $\ker (D)$ can be identified with $M$. By o-minimality every bounded definable function is equal to a constant plus a definable function that tends to zero, hence $\mathcal G_M$ is an $H$-field in the sense of \cite{Aschenbrenner2002}. Now it suffices to observe that if $(k,D)$ is an $H$-field and $v$ is the valuation whose valuation ring is the convex hull of $\ker (D)$, then for any $a,b\in k^\times$ with $v(a), v(b) \neq 0$, we have $v(a) < v(b) \iff v(a') < v(b')$ (see \cite[Corollary 1]{Rosenlicht1980} and \cite[Lemma 1.1]{Aschenbrenner2002}).
\end{proof}

\begin{lem}\label{lem:germs} Let $R = \mathcal O/\mathfrak{m}$ be as in Definition \ref{defn:germs}. By Corollary \ref{cor:induced-texp}, $R$ inherits from $\mathcal G_M$ the structure of restricted exponential field. With this structure, $R$ satisfies:  
\begin{enumerate}
    \item There is a derivation $\delta: R \to R$ defined by $\delta [f] = \left[\frac {df(t)}{dt}\right]$ where $[f]\in R$ is the equivalence class modulo $\mathfrak m$ of the germ of $f$.  
    \item The field of constants $\ker (\delta)$ consists of the classes $[f]$ of the eventually constant functions $f:(0,+\infty)\to M$ and can be identified with $M$. 
    \item If $x \in (-1,1)_R$ and $y = \texp(x)$, then $\frac {\delta y} y = \delta x$. \label{diff-eq}
    \item $R$ is real closed. 
  \item If $S$ is a semialgebraic subset of $\mathcal G_M^n$ defined over $M$, then the image of $S \cap \mathcal{O}^n$ under the residue map is the topological closure of $S(R)$. \label{item:semi-closure}

\end{enumerate}
\end{lem}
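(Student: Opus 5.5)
The five items are handled in order. Each is reduced either to the $H$-field structure of $(\mathcal G_M,D)$ from the proof of Proposition \ref{prop:hopital}, or to o-minimality and definable completeness of $M$ (Theorem \ref{thm:FSH}).

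For (1), it suffices to show that $D(\mathcal O)\subseteq\mathcal O$ and $D(\mathfrak m)\subseteq\mathfrak m$. Then $\delta[f]=[f']$ is well defined, and additivity and the Leibniz rule descend from $D$.
\begin{itemize}
\item If $f$ is polynomially bounded, $|f(t)|\le t^n$ eventually, and $f$ is eventually monotone by o-minimality.
\item For the bound on $f'$, use the mean value theorem in the form of Proposition \ref{prop:Taylor} with $n=1$ on $[t,2t]$. This gives some point $\xi\in(t,2t)$ with $|f'(\xi)|\le 2(2t)^n/t$, so $f'$ is polynomially bounded at a cofinal set of points.
\item Since $f'$ is eventually monotone, $|f'|$ is eventually monotone. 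If it is increasing, the cofinal bound transfers to every large $t$. If it is decreasing, $|f'|$ is bounded. So $D(\mathcal O)\subseteq\mathcal O$.
\item For $\mathfrak m$, take $f$ flat and not eventually zero. Apply Proposition \ref{prop:hopital} with $g=t^{-n}$. Since $f/t^{-n-1}\to 0$, we get $f'/t^{-n-2}\to0$ for all $n$, so $f'$ is flat.
\end{itemize}

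For (2), eventually constant functions clearly have $\delta=0$.
\begin{itemize}
\item Conversely, suppose $[f']=0$ with $f$ polynomially bounded. Write $f=c+g$ with $c\in M$ and $g\to0$ or $g$ unbounded. The unbounded case cannot occur: if $g$ were unbounded, $g'$ would not be flat. This follows from $v(g)<0$ in the $H$-field, hence $v(g')<v(t^{-2})$ by Rosenlicht/Aschenbrenner-type comparisons, which are the same tools as in Proposition \ref{prop:hopital}.
\item So $g\to0$. If $g$ is not flat, Proposition \ref{prop:hopital} applied with $t^{-n}$ shows that $g'$ is not flat, a contradiction. Hence $g\in\mathfrak m$ and $[f]=[c]$.
\item Finally, $M\to R$ is injective since $M\cap\mathfrak m=0$.
\end{itemize}

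For (3), take $x=[f]$ in $(-1,1)_R$. Choose a representative $f$ with values in $(-1,1)$ eventually; this is possible by convexity, after shrinking if needed. Then $y=[\texp\circ f]$ by Corollary \ref{cor:induced-texp}. The chain rule gives $(\texp\circ f)'=f'\cdot\texp\circ f$, and since $\texp(f)$ is a unit of $\mathcal O$, dividing yields $\delta y/y=\delta x$.

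For (4), $\mathcal G_M$ is real closed, being an elementary extension of $M$. The residue field of a convex subring of a real closed field is real closed, which is standard (Hensel lifting of simple roots of odd-degree polynomials and of square roots).

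Item (5) is the main obstacle. The inclusion of the closure of $S(R)$ in the residue image of $S\cap\mathcal O^n$ is the easy one. The residue image is closed because $S$ is defined over $M\subseteq\mathcal O$ and $(\mathcal G_M,\mathcal O)$ is a real closed field with a convex valuation ring. It also contains $S(R)$, since $M$ embeds in $R$ and $\mathcal G_M$ and $R$ are both real closed extensions of $M$. Then $S(R)$ lifts by the known theorem that $(\mathcal G_M,\mathcal O)\models$ RCVF and the residue field is a model of the same theory over $M$ (van den Dries–Lewenberg $T$-convexity with $T=$RCF).

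For the converse inclusion, I would invoke cell decomposition over $M$. Reduce to $S$ a cell, and argue by induction on $n$ that the standard part of a point of a cell lies in the closure of the cell over $R$. The ingredients are that semialgebraic functions over $M$ are continuous on cells and induce residue maps compatible with limits, which is the $T$-convex standard-part lemma. This is where I expect the technical care.
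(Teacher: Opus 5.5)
Items (1)--(4) are fine. In (1) you also check $D(\mathcal O)\subseteq\mathcal O$, which the paper leaves implicit. In (4) you use the general fact about residue fields of convex subrings of real closed fields, where the paper argues directly with a definable root $c(t)$ between $a(t)$ and $b(t)$.

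The gap is in (5), in the inclusion $\res(S\cap\mathcal O^n)\subseteq\overline{S(R)}$. This is the only part of (5) used later: Proposition \ref{prop:M-bounded} needs it to get $([\Phi],[\Psi])\in Z(R)$. For this inclusion you give only a plan (cell decomposition, induction on $n$, an unspecified ``$T$-convex standard-part lemma''), so nothing is proved. The missing idea is one you already gesture at for the other direction. By \cite[Lemma 12]{KaplanskyMaximal1942} there is a field embedding $\iota:R\to\mathcal O$ over $M$ that is a section of the residue map. By model completeness of RCF, $\iota$ is elementary in the language with constants for $M$. Now take $x\in S\cap\mathcal O^n$ and set $y=\res(x)$. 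Then $x-\iota(y)\in\mathfrak m^n$, so $\mathcal G_M\models\exists z\in S\,(|z-\iota(y)|<\iota(\eps))$ for every positive $\eps\in R$. Elementarity transfers this to $R$, giving $y\in\overline{S(R)}$. This is the paper's argument, and it needs no cell decomposition.

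The inclusion you call easy, $\overline{S(R)}\subseteq\res(S\cap\mathcal O^n)$, is not unconditional. Your claim that $\res(S\cap\mathcal O^n)$ is closed fails when $\mathfrak m=0$, which happens whenever $M$ is polynomially bounded (e.g.\ $M=\Rrexp$). Then $R=\mathcal G_M$, the residue map is the identity, and for $S=(0,1)$ the image is $S$, not its closure. So this inclusion, and item (5) as literally stated, needs $\mathfrak m\neq 0$. The paper's proof of it also tacitly uses a positive $\eps\in\mathfrak m$: it transfers $y\in\overline{S(R)}$ to $\iota(y)\in\overline{S}$ and picks $x\in S$ with $|x-\iota(y)|<\eps$. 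With $\mathfrak m\neq0$ your route works (closedness of the residue image, plus $S(R)\subseteq\res(S\cap\mathcal O^n)$). But the remark that ``$\mathcal G_M$ and $R$ are both real closed extensions of $M$'' does not by itself lift points of $S(R)$; you need the section $\iota$.
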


\begin{proof}
For (1), we must show that if $\germ(f)\in \mathfrak m$, then $\germ(f')\in \mathfrak m$, so assume $\germ(f)\in \mathfrak m$. Given $n\in \N$, we must show that $f'(t)/t^{-n}$ tends to zero for $t$ tending to $+\infty$ in $M$. To this aim it suffices to apply Proposition \ref{prop:hopital} with $g = t^{-n+1}$. 

Point (2) is easy. 

For (3), suppose $y = \texp(x)$. Then there is $f$ such that $x = [f]$ and $y = [\texp(f)]$. The result then follows by the chain rule. 

\replaced{Point (4) is a special case of \cite[Lemma 5]{CherlinDickmann83}. We provide a short proof. }{For (4)} It suffices to show that every polynomial over $R$ that changes sign has a zero. So let $G(X)=\sum_{i=0}^d [g_i] X^i$ be a polynomial and suppose $G([a]) < 0 <  G([b])$. 
For each sufficiently large $t\in M$ we then have $\sum_{i=0}^d g_i(t)a(t)^i < 0 < \sum_{i=0}^d g_i(t)b(t)^i$. Since $M$ is real closed, the polynomial $\sum_{i=0}^d g_i(t)X^i$ has a root $X=c(t)$, where $c$ is a definable function of $t$ and $a(t) \leq  c(t) \leq b(t)$. Since $a,b$ are polynomially bounded, so is $c$, and $[c]$ is a root of $G(X)$ between $[a]$ and $[b]$. 

For (5), since $\mathcal{G}_M$ is real closed, there is an embedding $\iota$ of $R$ in $\mathcal{G}_M$ over $M$ as an ordered field, see \cite[Lemma 12]{KaplanskyMaximal1942}. By model completeness of  the theory of real closed fields, $\iota$ is an elementary embedding (in the language of ordered rings expanded by constants for the elements of $M$). Let $y \in \res(S \cap \mathcal{O}^n)$. Then there is $x \in S \cap \mathcal{O}^n$ such that $\res(x)=y$, so for all positive $\eps \in R$ we have $|x-\iota(y)|<\iota(\eps)$. By elementarity of $\iota$, for every positive $\eps \in R$ there is $z \in S(R)$ at distance $<\eps$ from $y$, so $y \in \overline{S(R)}$. For the other inclusion, if $y \in \overline{S(R)}$ then $\iota(y) \in \overline{S}$, so for $\eps \in \mathfrak{m}$ there is $x \in S \cap \mathcal{O}^n$ at distance $<\eps$ from $\iota(y)$. Then $\res(x)=\res(\iota(y))=y$, so $y \in \res(S \cap \mathcal{O}^n)$.
\end{proof}
    
    \added{For point (5) see also \cite[Lemma 2.1]{Marikova2010} where it is shown, for o-minimal expansions of fields, that the image of a definable set under the residue map of a convex valuation is closed.} Although we will not need it, we also have: 
\begin{rem} \mbox{} If $R$ is as in Lemma \ref{lem:germs}, then:
\begin{enumerate} 
    \item $R$ is weakly o-minimal;
    \item  Every $(\ell,n)$-$\texp$-polynomial that changes sign on $(-1,1)^\ell \times R^{n-\ell}$ has a zero.  
\end{enumerate}
Point (1) follows from \cite[Section 4]{Baisalov1998}.  The proof of (2) is similar to the proof of Lemma \ref{lem:germs}(4). 
\end{rem}

\section{Ax's theorem}\label{sec:Ax}

\begin{thm}[{\cite[Theorem 3]{Ax1971}}]\label{thm:ax}
 Let $F \supseteq \Q$ be a field and let $D:F\to F$ be a derivation. Let $x_1, \ldots, x_\ell, y_1, \ldots, y_\ell \in F^\times$ be such that $Dx_i = Dy_i/y_i$ for all $i=1, \ldots, \ell$. Assume that $x_1, \ldots, x_\ell$ are $\Q$-linearly independent over $\ker D$. Then $\td(x_1, \ldots, x_\ell, y_1, \ldots, y_\ell/\ker(D)) >\ell.$
\end{thm}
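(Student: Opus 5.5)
This is Ax's theorem, quoted from \cite{Ax1971}. If I had to prove it myself, I would follow Ax's route through Kähler differentials.

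\emph{Setup.} Put $C=\ker D$ and $K=C(x_1,\dots,x_\ell,y_1,\dots,y_\ell)$, and suppose for a contradiction that $\td(K/C)\le \ell$. The restriction $D_{|K}:K\to F$ is a $C$-derivation. It therefore factors as $D_{|K}=\lambda\circ d$, where $d:K\to\Omega_{K/C}$ is the universal derivation and $\lambda:\Omega_{K/C}\to F$ is $K$-linear. In characteristic zero, $\dim_K\Omega_{K/C}=\td(K/C)\le\ell$.

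\emph{A dependence among the forms.} Consider the logarithmic-type forms $\omega_i=dy_i/y_i-dx_i\in\Omega_{K/C}$. The hypothesis $Dx_i=Dy_i/y_i$ gives $\lambda(\omega_i)=0$ for all $i$. Moreover $\lambda\neq0$, since some $Dx_i\neq0$: indeed no $x_i$ lies in $C$, by $\Q$-linear independence over $C$. Hence the $\omega_i$ lie in $\ker\lambda$, which has dimension $<\ell$. So there is a nontrivial relation $\sum_i a_i\omega_i=0$ with $a_i\in K$.

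\emph{Improving the coefficients.} Choose such a relation of minimal support and normalise one coefficient to $1$. Each $\omega_i$ is closed ($d\omega_i=0$), so applying exterior differentiation gives $\sum_i da_i\wedge\omega_i=0$. Comparing with the minimal relation, the standard minimality argument shows that all $da_i$ vanish. This is Ax's lemma that closed relations among closed forms can be taken with constant coefficients. Constancy here is relative to $C$, so one may assume $a_i\in C$ (working in $\Omega_{K/C}$, constants are exactly the algebraic closure of $C$, and $C$ may be taken algebraically closed in $F$). The relation then reads $\sum_i a_i\,dy_i/y_i=d\bigl(\sum_i a_ix_i\bigr)$. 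Here the right-hand side is exact and the left-hand side is a $C$-combination of logarithmic differentials.

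\emph{Main obstacle.} The step I expect to be hardest is the final descent from $C$ to $\Q$. It uses the Rosenlicht--Ax lemma on logarithmic differentials: if $\sum c_i\,dy_i/y_i$ is exact, one may refine the relation to one in which the $c_i$ are $\Q$-linearly independent. After that refinement, the $y_i$ involved must be algebraic over $C$ up to multiplicative dependence, and the corresponding combination $\sum c_ix_i$ must lie in $C$. One then reduces again to a relation with rational coefficients, $\sum q_i x_i\in C$ with $q_i\in\Q$ not all zero. This contradicts the assumed $\Q$-linear independence of the $x_i$ over $\ker D$, so $\td(K/C)>\ell$.
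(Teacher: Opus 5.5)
The paper gives no proof of this statement; it is simply quoted from Ax's paper. Your outline follows Ax's own route through $\Omega_{K/C}$, and your first step is fine: the $\omega_i$ are $K$-dependent because they lie in $\ker\lambda$, which has dimension $<\ell$. The second step, however, rests on a false lemma. It is not true that a $K$-linear relation among closed forms can be replaced by one with coefficients in $C$.

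Here is a counterexample. In $K=C(t)$, the forms $\omega_1=dt$ and $\omega_2=t\,dt$ are closed. They even have your shape $dy/y-dx$ (take $y=1$ and $x=-t$, resp.\ $x=-t^2/2$). They satisfy the minimal relation $t\omega_1-\omega_2=0$, yet they are $C$-linearly independent. In this example $\sum_i da_i\wedge\omega_i=0$ holds for trivial reasons, since $\Omega^2_{K/C}=0$. So exterior differentiation plus minimality cannot by itself force $da_i=0$: a $2$-form identity gives you nothing to ``compare'' with a $1$-form relation until you contract it with a derivation.

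The missing idea is to use $D$ again, through the interior product $\iota_D\colon\Omega^2_{K/C}\to\Omega_{K/C}\otimes_K F$, $\alpha\wedge\beta\mapsto\lambda(\alpha)\beta-\lambda(\beta)\alpha$. Applying it to $\sum_i da_i\wedge\omega_i=0$ and using $\lambda(\omega_i)=0$ gives $\sum_i D(a_i)\,\omega_i=0$. This relation has smaller support, because $a_1=1$. You need to tensor with $F$ because $D(K)\not\subseteq K$ in general, and $K$-independence in $\Omega_{K/C}$ is preserved in $\Omega_{K/C}\otimes_K F$. So minimality gives $D(a_i)=0$, i.e.\ $a_i\in\ker D=C$ directly. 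This is exactly where it matters that the $\omega_i$ are killed by a nonzero derivation with constant field $C$. In the counterexample, no nonzero $\lambda$ kills $\omega_1$.

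Your descent to $\Q$ needs the same ingredient, and you leave it implicit. Knowing $\sum_i c_ix_i\in C$ with $c_i\in C$ gives no relation over $\Q$; what you need is the multiplicative conclusion of the Rosenlicht lemma. Write $a_i=\sum_j q_{ij}c_j$ with the $c_j\in C$ linearly independent over $\Q$ and $q_{ij}\in\Q$. With $s$ a common denominator, $m_{ij}=sq_{ij}\in\Z$ and $u_j=\prod_i y_i^{m_{ij}}$, you get $\sum_i a_i\,dy_i/y_i=\sum_j (c_j/s)\,du_j/u_j$. The lemma then yields $du_j=0$, hence $Du_j=\lambda(du_j)=0$, i.e.\ $\sum_i m_{ij}\,Dy_i/y_i=0$. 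Using $Dx_i=Dy_i/y_i$, this says $\sum_i m_{ij}x_i\in\ker D$. Since not all $a_i$ vanish, some tuple $(m_{1j},\dots,m_{\ell j})$ is non-zero, which contradicts the hypothesis. With these two repairs, and granting the Rosenlicht lemma, your sketch becomes a correct proof.
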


 By an application of the compactness theorem for first-order logic we obtain the following uniform version of Theorem \ref{thm:ax}, arguing as in \cite[Proposition 8]{Zilber2002}.

\begin{cor}\label{cor:uniform-ax}
    Let $R$ be a real closed field with a derivation $\delta:R \to R$. Let $Z \subseteq R^{2\ell}$ be a semialgebraic set defined over $M = \ker (\delta)$, with $\dim Z \leq \ell$. Then there is a finite set $\mathcal{U} \subseteq \mathbb{Z}^\ell\setminus\{0\}$ such that for every tuple $(x,y)=(x_1,\dots,x_\ell,y_1,\dots,y_\ell) \in Z$ with $y_i \neq 0$ and $\frac{\delta(y_i)}{y_i}=\delta(x_i)$  for all $i=1,\dots,\ell$ there is $u \in \mathcal U$ which satisfies $\delta(u \cdot x)=0$, where $u \cdot x$ denotes the scalar product.
\end{cor}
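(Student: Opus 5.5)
We argue by contradiction via compactness, following \cite[Proposition 8]{Zilber2002}. Fix $\ell$ and the semialgebraic set $Z$, defined by a formula $\zeta(\bar x,\bar y,\bar c)$ in the language of ordered rings with parameters $\bar c$ from $M=\ker(\delta)$. Consider the language of ordered differential rings together with constants for $\bar c$. Let $\Sigma$ be the theory consisting of: real closed fields with a derivation $\delta$; the sentences $\delta(c_j)=0$; the statement that $Z$ has dimension $\le \ell$, which is first-order over $\bar c$ in $\textsf{RCF}$ and is inherited by all models since dimension of semialgebraic sets is definable in families; and the diagram facts about $\bar c$ needed to pin down $Z$. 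Add new constants $\bar a,\bar b$ and the axioms $\zeta(\bar a,\bar b,\bar c)$, $b_i\neq 0$, $\delta(b_i)=b_i\,\delta(a_i)$, together with $\delta(u\cdot\bar a)\neq 0$ for every $u\in\Z^\ell\setminus\{0\}$.

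If no finite $\mathcal U$ works in $R$, then every finite subset of this theory is satisfied in $(R,\delta)$: for a given finite $\mathcal U$, a witness $(x,y)\in Z$ failing all $u\in\mathcal U$ interprets $\bar a,\bar b$. By compactness we obtain a real closed differential field $(F,D)$ containing the image of $\bar c$ in $\ker D$, and a point $(\bar a,\bar b)\in Z(F)$ with $Db_i/b_i=Da_i$ and $D(u\cdot \bar a)\neq 0$ for all nonzero integer vectors $u$. Thus $Da_1,\dots,Da_\ell$ are $\Q$-linearly independent. From this it follows that $a_1,\dots,a_\ell$ are $\Q$-linearly independent modulo $\ker D$: a relation $\sum q_i a_i\in\ker D$ with $q\in\Q^\ell\setminus\{0\}$ would give, after clearing denominators, $D(u\cdot\bar a)=0$.

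Theorem \ref{thm:ax} then yields $\td(\bar a,\bar b/\ker D)>\ell$. On the other hand, $(\bar a,\bar b)\in Z(F)$, and $Z$ is defined over $\bar c\subseteq\ker D$ with $\dim Z\le\ell$. In $\textsf{RCF}$, every point of a semialgebraic set of dimension $\le\ell$ defined over a field $K$ has transcendence degree $\le\ell$ over $K$. Applying this with $K=\Q(\bar c)$, and noting that the relevant real closure lies inside $\ker D$ (which is relatively algebraically closed in $F$), we get $\td(\bar a,\bar b/\ker D)\le\td(\bar a,\bar b/\Q(\bar c))\le\ell$. This contradiction shows that some finite $\mathcal U$ works.

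The main obstacle is making sure the transfer is legitimate. First, ``$\dim Z\le\ell$'' must be expressed first-order so that it holds in $F$; it suffices to include the complete $\textsf{RCF}$-type of $\bar c$, or simply the single sentence saying that $\zeta(\cdot,\cdot,\bar c)$ defines a set of dimension $\le\ell$, which is expressible via cell decomposition. Second, the transcendence degree bound must be taken over the correct constant field; this works because $\td$ over $\Q(\bar c)$ bounds $\td$ over the larger field $\ker D$. Third, the conclusion ``$\delta(u\cdot x)=0$'' must make sense with integer vectors, which requires clearing denominators from any rational relation.
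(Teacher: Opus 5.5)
Your proof is correct and follows essentially the same route as the paper: a compactness argument following Zilber produces a real closed differential field with a point of $Z$ at which $D(u\cdot \bar a)\neq 0$ for every nonzero integer vector $u$, and this contradicts Ax's theorem because $\td(\bar a,\bar b/\ker D)\le \dim Z\le \ell$. Your extra remarks (that linear independence modulo $\ker D$ follows, and that $\td$ over $\ker D$ is bounded by $\td$ over $\Q(\bar c)\subseteq \ker D$) just spell out details the paper leaves implicit; the relative algebraic closedness of $\ker D$ is not actually needed for that bound.
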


\begin{pf}
    Consider the set $\Sigma(x_1,\dots,x_\ell,y_1,\dots,y_\ell)$ of formulas in the language of ordered differential fields \[ \left\{ (x,y) \in Z\wedge \bigwedge_{i=1}^\ell y_i \neq 0 \wedge \bigwedge_{i=1
    }^\ell \frac{\delta(y_i)}{y_i}=\delta(x_i)\right\} \cup \left\{ \delta(u \cdot x) \neq 0 \mid u \in \mathbb{Z}^\ell \setminus \{0\} \right\}. \] If $\Sigma(x,y)$ is finitely satisfiable in $R$, then by the compactness theorem there are a real closed differential field $(R',\delta')$ extending $(R,\delta)$ and some $(\xi,\eta) \in Z(R')$ such that for all $i=1,\dots,\ell$ we have $\frac{\delta'(\eta_i)}{\eta_i}=\delta'(\xi_i)$ and $\delta'(u \cdot \xi) \neq 0$ for every $u \in \mathbb{Z}^\ell \setminus \{0\}$. On the other hand, by Ax's Theorem \ref{thm:ax} the inequality $\td(\xi,\eta/\ker(\delta')) \leq \dim Z \leq \ell$ implies that there is some $u \in \mathbb{Z}^\ell \setminus \{0\}$ such that $\delta'(u \cdot \xi)=0$, thus we have a contradiction. This shows that $\Sigma(x,y)$ is not finitely satisfiable, so there is a finite $\mathcal{U} \subseteq \mathbb{Z}^\ell \setminus \{0\}$ such that the formula \[ \left((x,y) \in Z \wedge \bigwedge_{i=1
    }^\ell \frac{\delta(y_i)}{y_i}=\delta(x_i)\right) \implies \bigvee_{u \in \mathcal{U}} \delta(u \cdot x)=0 \] holds for all tuples $(x,y)$ in the ordered differential field $R$.
\end{pf}

We will apply Corollary \ref{cor:uniform-ax} to the field $R$ of Definition \ref{defn:germs}, equipped with the derivation $\delta$ of Lemma \ref{lem:germs}.

\section{Khovanskii points are bounded}\label{sec:bounded}

The aim of the next two sections is to show the model completeness of $\Tres$. Fix models $M\subset N$ of $\Tres$. 

\begin{defn}\label{def:complexity}
\added{
    A quantifier-free formula $\varphi(x_1, \ldots, x_n)$ with parameters from $M$ has complexity $\leq \ell$ if it is equivalent in $\Tres$ to a disjunction of formulas $\bigvee_{i=1}^k \psi_i$, where each $\psi_i$ is, up to a permutation of the variables, of the form   $\exists \bar y \; \left(F_q  = 0 \land  \bigwedge_i z_i \in (-1,1)\right)$ where $\bar y$ is a tuple of variables, $F_q$ is an $\texp$-polynomial of complexity $\leq \ell$, and $\bar z \subset \bar x, \bar y$ are the variables that appear in the scope of $\texp$ in $F_q$.}
\end{defn}

\begin{lem}\label{rem:complexity}
\added{
Every quantifier-free formula $\varphi$ has complexity $\leq \ell$ for some $\ell$. 
}
\end{lem}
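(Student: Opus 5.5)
The plan is to flatten the terms occurring in $\varphi$ by introducing a new variable for every occurrence of $\texp$, and then to convert the resulting atomic and negated atomic conditions into single polynomial equations, using the fact that models of $\Tres$ are ordered fields.

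First put $\varphi$ in disjunctive normal form. It then suffices to treat a conjunction of atomic and negated atomic formulas $t_1 = t_2$ and $t_1 \le t_2$, because disjunctions of formulas of complexity $\le \ell$ have complexity $\le \ell$. Negations are rewritten as positive formulas: $\lnot(t_1 = t_2)$ becomes $t_1<t_2 \lor t_2<t_1$, and $\lnot(t_1\le t_2)$ becomes $t_2<t_1$. After redistributing the disjunctions, we are left with conjunctions of conditions of the form $t=0$, $t\ge 0$ and $t>0$, where $t$ is a term.

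Next eliminate the nested and non-restricted occurrences of $\texp$. Proceed by induction on terms, innermost occurrence first. For each subterm $\texp(s)$, introduce a new variable $w$ together with the disjunction
\[
\bigl(w \in (-1,1) \land v = \texp(w) \land w = s\bigr) \lor \bigl((w \le -1 \lor w \ge 1) \land v = 0 \land w = s\bigr),
\]
with $v$ a further new variable replacing $\texp(s)$. In the second disjunct, $\texp$ no longer appears applied to $w$, and it is equivalent in $\Tres$ to the original clause because $\texp$ vanishes outside $(-1,1)$. Repeating this, every atomic piece becomes polynomial in the variables together with terms $\texp(z)$, where $z$ is a variable constrained to $(-1,1)$. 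The equations $v=\texp(w)$ and $w=s$ are themselves polynomial in the variables and $\texp(w)$, so $v$ can simply be substituted.

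Then turn everything into a single equation. Each $t\ge 0$ becomes $\exists u\,(t-u^2=0)$, each $t>0$ becomes $\exists u\,(tu^2-1=0)$, and conditions like $w\ge1$ are handled the same way. This is legitimate because models of $\Tres$ are real closed: they are definably complete ordered fields. A conjunction of equations $g_1=\dots=g_r=0$ is replaced by $\sum g_i^2=0$. The result is $\exists \bar y\,(F_q=0 \land \bigwedge z_i\in(-1,1))$, where $F_q$ is a polynomial in the variables and the $\texp(z_i)$, with $z_i$ exactly the variables in the scope of $\texp$. After permuting variables so that the $z_i$ come first, $F_q$ is an $(\ell,n)$-$\texp$-polynomial, where $\ell$ is the number of occurrences introduced. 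We may also take a single $\ell$ as the maximum over all disjuncts.

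The main point to check carefully is the case split for $\texp(s)$ with $s\notin(-1,1)$. Every $\texp$ appearing in $F_q$ must be applied to a variable that is explicitly constrained to $(-1,1)$, as Definition \ref{def:complexity} requires; the case split above ensures exactly this. The rest is bookkeeping.
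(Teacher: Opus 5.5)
Your proof is correct and is essentially the paper's argument: fresh existentially quantified variables flatten nested $\texp$, and a case split decides whether each argument of $\texp$ lies in $(-1,1)$, with $\texp$ replaced by $0$ outside that interval. The real-closed-field rewritings $\exists u\,(tu^2-1=0)$, $\exists u\,(t-u^2=0)$ and sums of squares then collapse each disjunct to one equation plus the constraints $z_i\in(-1,1)$; the only difference is step order (you split as each occurrence is flattened, the paper collapses conjunctions first, then flattens, then splits), which is immaterial.
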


\begin{proof}\added{
     We may assume that the language includes the subtraction sign as we may write $u-v = c$ as $u = v+c$. Granted this, $\varphi$ is equivalent to a disjunction of conjunctions of formulas of the form $f=0$ or $g>0$ where $f,g$ are terms. We may eliminate the conjunctions using the equivalence $(f_1=0 \land f_2 = 0) \liff f_1^2+f_2^2 = 0$ and $f=0 \land g>0 \liff \exists x (f=0 \land x^2g - 1 = 0)$. 
     Using the equivalence 
      $p(\bar x, \texp(t)) = 0 \liff \exists y (y = t \land p(\bar x, \texp(y)) = 0)$ we may assume that each term is an $\texp$-polynomial, namely $\texp$ is only applied to variables. Our $\varphi$ is thus equivalent to a disjunction of formulas of the form $\exists \bar y F_q = 0$ where $F_q$ is an $\texp$-polynomial. Let $\bar z=(z_1,\dots,z_k) \subset \bar x, \bar y$ be the variables that appear in the scope of $\texp$ in $F_q$. Then $\exists \bar y F_q=0$ is equivalent to a disjunction of formulas of the form $\exists \bar y (F_q=0 \land \bigwedge_{i \in I} z_i \in (-1,1) \land \bigwedge_{j \in J} z_j \notin (-1,1))$,  where $I \cup J$ is a partition of $\{1,\dots,k\}$. We may now replace, for every $j \in J$, every occurrence of $\texp(z_j)$ in $F_q$ with $0$ and eliminate the inequalities expressing $z_j \nin (-1,1)$ using additional existential quantifiers, and eliminate the resulting conjunctions as we did above. 
      }
\end{proof}

\begin{exa}
The formula $x+y+\texp(x)=0$ is equivalent to 
\[\left(x\in (-1,1) \land x+y+\texp(x)=0 \right) \lor (x \nin (-1,1) \land x+y=0)\] and the second disjunct can be rewritten as
\[ \exists u((u^2(x+1)(x-1)-1)^2+(x+y)^2=0).\]
\end{exa}

\deleted{A quantifier-free formula $\varphi(x_1, \ldots, x_n)$ with parameters from $M$ has complexity $\leq \ell$ if, up to a permutation of the variables, it has the form $\left( \bigwedge_{i=1}^\ell -1 < x_i < 1 \right) \land  \psi(\bar x)$ where $\psi(\bar x)$ is a boolean combination of expressions of the form $F_p > 0$ or $F_q = 0$ where $F_p$ and $F_q$ are $\texp$-polynomials over $M$ of complexity $\leq \ell$.} 

\deleted{
\begin{rem}

Note that every quantifier-free formula without nested occurrences of $\texp$ is equivalent to a disjunction of formulas of complexity $\leq \ell$ for some $\ell$. 
For example $x+y>2 \land \texp(x)=z$ is equivalent to 
$$\left(x\in (-1,1) \land x+y>2 \land \texp(x)=z \right) \lor \left(x \nin (-1,1) \land x+y>2 \land 0 =z\right).$$
\end{rem}
}

\begin{defn}\label{A-B-conditions}
    We will denote by $A_{\ell}$ and $B_{\ell}$ the following statements.

    \begin{itemize}
        \item[$A_\ell$:]  Every Khovanskii point over $M$ of complexity $\leq \ell$ has coordinates in $M$.
        \item[$B_\ell$:]  Every Khovanskii point over $M$ of complexity $\leq \ell$ is $M$-bounded, namely it is bounded in norm by an element of $M$.
    \end{itemize}
For $\ell>0$, we will also write $A_{<\ell}$ and $B_{<\ell}$ for $A_{\ell-1}$ and $B_{\ell-1}$.
\end{defn}

\begin{prop}\label{prop:al-implies-complexity}
    Assume $A_\ell$. Then every quantifier-free formula over $M$ of complexity $\leq \ell$ is satisfiable in $M$ if and only if it is satisfiable in $N$. 
\end{prop}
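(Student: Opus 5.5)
The direction from $M$ to $N$ is immediate. Since $M\subseteq N$ is a substructure, a quantifier-free formula with parameters in $M$ that holds of a tuple from $M$ in $M$ also holds of it in $N$. The work is the converse: assuming $A_\ell$, a quantifier-free formula of complexity $\leq \ell$ satisfiable in $N$ is satisfiable in $M$.

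By Definition \ref{def:complexity}, the formula is equivalent in $\Tres$ to a disjunction $\bigvee_i \psi_i$. Each $\psi_i$ has, up to a permutation of the variables, the form $\exists \bar y\,(F_q=0\wedge \bigwedge z_j\in(-1,1))$, with $F_q$ an $\texp$-polynomial over $M$ of complexity $\leq\ell$. The equivalence holds in both $M$ and $N$, since both are models of $\Tres$. So it suffices to treat a single $\psi_i$. Satisfiability of $\psi_i$ in a model is equivalent to the existence of a point $(\bar x,\bar y)$ in the $\E$-variety $V_{\ell',n}(F_q)$ for some $\ell'\le\ell$. After reordering, the variables inside the scope of $\texp$ come first and are restricted to $(-1,1)$, which is exactly the set $U_{\ell',n}$.

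Suppose now that $N$ satisfies $\psi_i$, so that $V_{\ell',n}(F_q)(N)$ is non-empty. By Proposition \ref{prop:servi}, applied in $N$ with parameters from $M$, this variety contains the projection of an $\E$-Khovanskii point $(\alpha,\beta)$ over $M$ of complexity $\leq \ell'\le \ell$. That is, some $(\alpha,\beta)\in N^{m}$ with $m\ge n$ satisfies a Khovanskii system over $M$, and a projection of it lies in the variety. By $A_\ell$, all coordinates of $(\alpha,\beta)$ lie in $M$. The projection is then a tuple of $M$ lying in $V_{\ell',n}(F_q)(N)$. The variety is defined by a quantifier-free condition: polynomial equations in the $x_j$ and $\texp(x_j)$ together with the conditions $x_j\in(-1,1)$. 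Such a condition is absolute between $M$ and $N$ for tuples of $M$, so the point lies in $V_{\ell',n}(F_q)(M)$, and $\psi_i$ is satisfied in $M$.

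The only delicate point is bookkeeping. Proposition \ref{prop:servi} must produce a Khovanskii point whose complexity does not exceed $\ell$, and whose restricted coordinates project onto the restricted coordinates of the variety. This is guaranteed by the statement ``of complexity $\leq \ell$, defined over the same parameters''. With that in place, the argument reduces to the observation that quantifier-free truth is preserved between a substructure and an extension.
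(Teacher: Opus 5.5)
Your proof is correct and follows the paper's own argument. You reduce via Definition \ref{def:complexity} to a single formula $F_q=0\wedge\bigwedge_i z_i\in(-1,1)$ defining an $\texp$-variety over $M$, apply Proposition \ref{prop:servi} to obtain a Khovanskii point over $M$ of complexity $\leq \ell$ whose projection lies in that variety, and invoke $A_\ell$ to put it in $M$. You also spell out the trivial direction and the transfer of quantifier-free truth between $M$ and $N$, which the paper leaves implicit.
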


\begin{proof}
    \deleted{Given a quantifier-free formula, we may eliminate the inequality symbols at the expense of introducing existential quantifiers and new variables, for example by writing $x>0$ as $\exists y (xy^2=1)$. It follows that any quantifier-free formula $\phi$ over $M$ of complexity $\leq \ell$ defines, up to a permutation of the variables, the projection of an $\texp$-variety over $M$ of the same complexity. Moreover, every $\texp$-variety over $M$ contains the projection of an $\texp$-Khovanskii point $P\in N^n$ over $M$ of the same complexity by Proposition \ref{prop:servi}. By $A_{\ell}$, the $\texp$-Khovanskii point has coordinates in $M$, and hence its projection satisfies $\phi$ in $M$.}

    \added{By Definition \ref{def:complexity}, it suffices to consider formulas of the form $F_q=0 \land \bigwedge_i z_i \in (-1,1)$, where $F_q$ is an $\texp$-polynomial over $M$ of complexity $\leq \ell$. Such a formula defines an $\texp$-variety $V$ over $M$ of complexity $\leq \ell$. By Proposition \ref{prop:servi}, the $\texp$-variety $V$ contains the projection of an $\texp$-Khovanskii point $P\in N^n$ over $M$ of the same complexity. By $A_{\ell}$, the point $P$ has coordinates in $M$.}
\end{proof}

To prove model completeness of $\Tres$ we will prove that $A_{\ell}$ holds for every $\ell \in \N$ by an inductive argument: in Proposition \ref{prop:M-bounded} we show that $A_{<\ell}$ implies $B_\ell$, and in Proposition \ref{prop:a<l-al} we use $B_{\ell}$ in the proof that $A_{<\ell}$ implies $A_\ell$. The model completeness result will then follow by Proposition \ref{prop:al-implies-complexity}.

\begin{lem}\label{lem:eliminate-dependences-pre}
Let $M \subseteq N$ be restricted exponential fields, and $(\alpha,\beta) \in N^n$ be an $(\ell,n)$-$\texp$-Khovanskii point over $M$. Suppose that $\alpha_\ell$ lies in the $\Q$-linear span of $\alpha_1, \ldots, \alpha_{\ell-1}$ modulo $M$. Then there is a non-zero integer $d\in \Z$ such that, letting $\alpha' = (\alpha_1/d, \ldots, \alpha_{\ell -1}/d)$,  we have that $(\alpha',\beta)$ is an $(\ell-1, n-1)$-Khovanskii point over $M$.
\end{lem}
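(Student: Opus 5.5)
The plan is to eliminate the variable $x_\ell$ by substitution. We then check that the resulting system in the remaining $n-1$ variables has an invertible Jacobian at $(\alpha',\beta)$, after discarding one equation.

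\textbf{Setup.} By hypothesis $\alpha_\ell = \sum_{i<\ell} q_i\alpha_i + m$ with $q_i\in\Q$ and $m\in M$. Choose a non-zero integer $d$ with $k_i := d q_i\in\Z$ for all $i$. Then $m=\alpha_\ell-\sum q_i\alpha_i$ is finite, since $|m|<1+\sum|q_i|$, so $c:=\Exp(m)=\texp(m/n_0)^{n_0}$ (for a suitable $n_0\in\N$) is a non-zero element of $M$, as $M$ is closed under $\texp$. Put $\alpha'_i=\alpha_i/d\in(-1,1)$. Using the functional equation (and Proposition \ref{prop:exp-finite} to handle sums leaving $(-1,1)$), we get
\[
\texp(\alpha_i)=\texp(\alpha'_i)^d \quad (i<\ell), \qquad \texp(\alpha_\ell)=c\prod_{i<\ell}\texp(\alpha'_i)^{k_i}.
\]

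\textbf{Substituted system.} Let $(\alpha,\beta)$ be a regular zero of $f_j=F_{p_j}$, $j=1,\dots,n$. Define the substitution $\phi(x',y)=(dx',\ \sum k_i x'_i+m,\ y)$. In each $p_j$, replace $\texp(x_i)$ by $\texp(x'_i)^d$ and $\texp(x_\ell)$ by $c\prod\texp(x'_i)^{k_i}$. Then multiply by $\prod_{i<\ell}\texp(x'_i)^{N}$ with $N$ large, to clear negative exponents. This gives $(\ell-1,n-1)$-$\texp$-polynomials $g_j$ over $M$ with
\[
g_j = u\cdot (f_j\circ\phi), \qquad u=\prod_{i<\ell}\texp(x'_i)^N,
\]
where $u$ is a unit near $(\alpha',\beta)$. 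Clearly $(\alpha',\beta)$ is a common zero of all the $g_j$.

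\textbf{Jacobian computation.} At a common zero the product rule gives $dg_j=u\,d(f_j\circ\phi)$, since the term $f_j\circ\phi\cdot du$ vanishes. By the chain rule, $d(f\circ\phi)=df_{(\alpha,\beta)}\circ d\phi$. Here $d\phi$ is an injective linear map whose image is the hyperplane $H=\{x_\ell=\sum q_ix_i\}$ (the translation by $m$ is irrelevant). These identities are compatible with the formal rule $\partial\texp(x)/\partial x=\texp(x)$, because $\partial(\texp(x'_i)^d)/\partial x'_i=d\,\texp(x'_i)^d$, matching the chain rule for $x_i=dx'_i$. So they hold for the formal Jacobians used in Khovanskii systems, and no definable completeness is needed. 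Since $df_{(\alpha,\beta)}$ is an isomorphism $N^n\to N^n$, its restriction to $H$ has rank $n-1$. Hence the $n\times(n-1)$ Jacobian of $(g_1,\dots,g_n)$ at $(\alpha',\beta)$ has rank $n-1$, and some $(n-1)\times(n-1)$ minor, say the one obtained by omitting row $j_0$, is non-zero.

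\textbf{Conclusion and main obstacle.} Dropping $g_{j_0}$, the point $(\alpha',\beta)$ is a regular zero of $n-1$ functions in $n-1$ variables, i.e.\ an $(\ell-1,n-1)$-Khovanskii point over $M$. The only delicate steps are the bookkeeping of the exponential identities (the constant $c$ must lie in $M$, and the negative powers must be cleared) and the rank argument for discarding one equation. I expect the rank argument to be the main point, but the restriction-to-a-hyperplane observation above handles it directly.
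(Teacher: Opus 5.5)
Your proposal is correct and is essentially the paper's argument: the paper proves this lemma as the special case $t=\emptyset$ of Lemma \ref{lem:eliminate-dependences}(\ref{item:el-dep-2}). It uses the same linear substitution $x_i=dX_i$, $x_\ell=\sum_i k_iX_i+g/d$, the same rewriting of $\texp(x_\ell)$ via the constant $e^{g/d}\in M$, the same clearing of denominators by a non-vanishing product of exponentials, and the same rank-$(n-1)$ argument to discard one equation. Your explicit check that the chain rule agrees with the formal rule $\partial\texp(x)/\partial x=\texp(x)$ is a detail the paper leaves implicit, and it is needed because $N$ need not be definably complete.
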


\begin{proof}
    We will later prove a more general result, see Lemma \ref{lem:eliminate-dependences}.
\end{proof}

\begin{defn}
    Given $x \in N$, an element $y \in M$ is the \emph{standard part in $M$} of $x$ if $|x-y|$ is smaller than every positive element of $M$. The standard part in $M$ of an element $x$, if it exists, is necessarily unique, and we denote it by $\st(x)$. For $x=(x_1,\dots,x_n) \in N^n$ the standard part of $x$, if it exists, is $(\st(x_1),\dots,\st(x_n)) \in M^n$. 
\end{defn} 

\begin{lem}\label{lem:standard} Assume $A_{<\ell}$. Let $(\alpha,\beta) \in N^n$ be an $(\ell,n)$-Khovanskii point over $M$ that is not $M$-bounded. Then $\alpha$ has a standard part in $M$. 
\end{lem}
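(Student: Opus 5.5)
The plan is to realise $\alpha$ as the value at an $N$-infinite parameter of a curve defined over $M$. I then take limits in $M$, and transfer to $N$ the facts I need about the curve. For the transfer I will use $A_{<\ell}$ through Proposition \ref{prop:al-implies-complexity}, on formulas in which one $\texp$-variable is fixed to a constant from $M$.

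\textbf{Setting up the curve.} Let $(\alpha,\beta)$ be a regular zero in $U_{\ell,n}$ of $(\ell,n)$-$\texp$-polynomials $f_1,\dots,f_n$ over $M$. Since $\alpha\in(-1,1)^\ell$ and $(\alpha,\beta)$ is not $M$-bounded, some coordinate $\beta_j$ satisfies $|\beta_j|>M$; say $\beta_j>M$. Drop one equation $f_k$ so that the remaining Jacobian has a nonzero $(n-1)\times(n-1)$ minor in the variables other than $x_{\ell+j}$; this is possible because $\det J\neq 0$ at the point. Let $C$ be the set of points of $U_{\ell,n}$ that are zeros of the remaining $n-1$ polynomials and at which this minor does not vanish. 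By the implicit function theorem, valid in the o-minimal structure $N$, $C$ is locally the graph of a definable $C^1$ function of $x_{\ell+j}$.
\begin{itemize}
\item Using cell decomposition in $N$ applied to an $M$-formula, I would choose finitely many continuous $M$-definable branches $\gamma^{(1)},\dots,\gamma^{(r)}$ of $C$ over an interval $(s_0,+\infty)$ with $s_0\in M$. Then $(\alpha,\beta)=\gamma^{(p)}(\beta_j)$ for some $p$.
\item Each coordinate $\gamma^{(p)}_{i}$ with $i\le \ell$ takes values in $(-1,1)$.
\item By o-minimality of $M$, interpreting the same formula in $M$ shows that $\gamma^{(p)}_i$ has a limit $c_i\in[-1,1]$ at $+\infty$ computed in $M$. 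I claim $c_i=\st(\alpha_i)$.
\end{itemize}

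\textbf{Transfer of the limit behaviour.} Fix $\eps\in M^{>0}$. In $M$, for large $t$ the branch avoids the values $c_i\pm\eps$ and stays in $(c_i-\eps,c_i+\eps)$. I want the same in $N$ for $t=\beta_j>M$. It suffices to show that the set
\[
\{\,t>s : \gamma^{(p)}_i(t)=c_i\pm\eps\,\}
\]
is empty in $N$ whenever it is empty in $M$, for suitable $s\in M$. Continuity of the branch on $(s,+\infty)$ then gives $\gamma^{(p)}_i(\beta_j)\in(c_i-\eps,c_i+\eps)$.

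The crossing condition is a quantifier-free condition on the zero set of $f_1,\dots,\widehat{f_k},\dots,f_n$ with $x_i=c_i\pm\eps$ substituted. After the substitution, $\texp(x_i)$ becomes the constant $\texp(c_i\pm\eps)\in M$, so the formula has complexity $\leq\ell-1$. Hence by $A_{<\ell}$ and Proposition \ref{prop:al-implies-complexity} it is satisfiable in $M$ iff it is satisfiable in $N$. The boundary case $c_i=\pm1$ is handled the same way with the values $\pm(1-\eps)$.

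\textbf{Main obstacle.} The difficulty is the bookkeeping that makes "the branch $\gamma^{(p)}$" expressible by a formula of complexity $<\ell$, uniformly in $M$ and $N$. Branch selection involves the minor condition, the ordering among branches and the domain restrictions $x_i\in(-1,1)$, and each of these must be written via Lemma \ref{rem:complexity} without reintroducing $x_i$ under $\texp$.

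My first attempt would avoid selecting branches. I would instead transfer the single statement that no point of $C$ with $x_{\ell+j}>s$ has $x_i=c_i\pm\eps$, for every $i$, every limit value $c_i$ of every branch, and every $\eps$. I would then use the fact that $\alpha_i$, lying on a branch, must stay within $\eps$ of one of the finitely many $M$-limits. This requires first proving, in $M$, that near infinity $C$ has only finitely many branches with limits $c^{(1)},\dots,c^{(r)}$. Then, for each sufficiently small $\eps\in M^{>0}$, I would pick a limit $c^{(p)}_i$ with $|\alpha_i-c^{(p)}_i|<\eps$; since there are finitely many limits, a single $c^{(p)}_i$ works for all $\eps$, and it is $\st(\alpha_i)$.
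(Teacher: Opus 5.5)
\textbf{There is a genuine gap}, in your last step: ``$\alpha_i$, lying on a branch, must stay within $\eps$ of one of the finitely many $M$-limits''.

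Your curve $C$ is cut out by $\texp$-polynomials of complexity $\ell$. Under $A_{<\ell}$, the facts about $C$ you can transfer are those expressible with complexity $<\ell$, such as your crossing formulas, where a restricted coordinate is frozen at a constant of $M$. These give: on $C(N)\cap\{x_{\ell+j}>s\}$, $x_i$ never equals any $c^{(p)}_i\pm\eps$. So along the $N$-branch through $(\alpha,\beta)$, $x_i$ stays in one component of $(-1,1)\setminus\{c^{(p)}_i\pm\eps\}_p$. They do not tell you whether that component is a window $(c^{(p)}_i-\eps,c^{(p)}_i+\eps)$ or a gap between windows.

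To decide this you would have to follow the branch from height $\beta_j$ down to heights in $M$ where you know its position. Nothing you transferred prevents the branch from ending in $N$ at a height above $M$: where your minor vanishes, where some $x_m$ with $m\le\ell$ tends to $\pm1$, or where another coordinate blows up. Nor do you control the fibres of $C$ over heights $t\in M$. Adjoining $x_{\ell+j}-t$ to your $n-1$ equations turns the minor into a full Jacobian, so these fibre points are $(\ell,n)$-Khovanskii points over $M$, and transferring them is $A_\ell$ itself.

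The same objection hits your first version: the $N$-interpretations of the $M$-definable branches $\gamma^{(p)}$ need not be continuous on all of $(s,+\infty)$, need not pass through $(\alpha,\beta)$, and need not agree with the $M$-branches at $M$-heights. A priori $\alpha_i$ could realize a cut of $M$ without standard part, lying in a gap. Excluding this is the whole content of the lemma, and the crossing information does not do it.

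The missing idea is to lower the complexity of the \emph{whole curve} rather than of individual slices. Keep all $n$ equations, but replace $\texp(x_\ell)$ by a new free variable $y$. If $f_i=p_i(\bar x,\bar u,\texp(x_1),\dots,\texp(x_\ell))$ and $g_i$ is obtained by this replacement, then $\partial f_i/\partial x_\ell=\partial g_i/\partial x_\ell+\texp(x_\ell)\,\partial g_i/\partial y$ and the other columns agree. Hence $(\alpha,\beta,\texp(\alpha_\ell))$ is a regular zero of $n$ functions of complexity $\le\ell-1$ in $n+1$ variables. The paper does the same via Lemma~\ref{lem:exists-variety}, taking $W$ to be the regular points of $V\cap G^{n,\ell,-}_{\exp}$. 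This gives a one-dimensional $W$ of complexity $<\ell$ through the lifted point, and every quantifier-free statement about $W$ now transfers.

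Let $\pi$ be the projection to the first $\ell$ coordinates and $W_t=\{x\in W:|x|=t\}$. In $M$, for large $t$ the finite sets $\pi(W_t)$ are traced by finitely many definable functions with limits $a_1,\dots,a_k\in[-1,1]^\ell$. So for each $\eps\in M^{>0}$ there is $r\in M$ with $x\in W\wedge|x|>r\Rightarrow\bigvee_i|\pi(x)-a_i|\le\eps$. Its negation has complexity $<\ell$, so the implication holds in $N$. Applying it to the lifted point, whose norm exceeds every element of $M$, gives $\st(\alpha)\in\{a_1,\dots,a_k\}$, with no branch-following in $N$.
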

\begin{pf}
    By Lemma \ref{lem:exists-variety} we have that $(\alpha,\beta,\texp(\alpha))$ lies in a transversal intersection between $V$ and $G^{n,\ell}_{\exp}$ where $V$ is an algebraic variety over $M$ of dimension $\ell$, which is locally defined around $(\alpha,\beta,\texp(\alpha))$ by polynomial equations $p_1=\dots=p_n=0$ in $n+\ell$ variables. Let 
    $$
    W =V^{\reg}_{\ell,n+\ell}(p_1,\dots,p_n,y_1-\texp(x_1),\dots,y_{\ell-1}-\texp(x_{\ell-1}))
    $$ 
    be the set of regular points of $V \cap G_{\exp}^{n,\ell,-}$ in $U_{\ell,n+\ell}=(-1,1)^\ell \times N^n$. Then $W$ contains $(\alpha,\beta,\texp(\alpha))$ by Remark \ref{rem:meet-transversally}, it has dimension one by regularity, and it is defined by a formula of complexity $<\ell$ over $M$. Let $W_t = \{x\in W : |x| = t\}$. 
    By dimension considerations, $W_t$ is finite for all but finitely many $t$ (in both models $M$ and $N$). Work in the small model $M$. Let $\pi:M^{n+\ell}\to M^\ell$ be the projection to the first $\ell$ coordinates. Then there are $t_0 \in M$ and $k \in \N$ such that for all $t \in M$ with $t \geq t_0$ the set $D_t:= \pi(W_t) \subset M^\ell \cap (-1,1)^\ell$ has exactly $k$ elements. Moreover, $k >0$, as otherwise the quantifier-free formula $(x \in W \wedge |x|>t_0)$, which is satisfied in $N$ by $(\alpha,\beta,\texp(\alpha))$, would not be satisfied in $M$, contradicting $A_{\ell}$ and Proposition \ref{prop:al-implies-complexity}.  It follows that there are $M$-definable functions $f_1, \ldots, f_k$ ($k\neq 0$) such that, for $t \geq t_0$ in $M$, $D_t = \{f_1(t), \ldots, f_k(t)\} \subseteq (-1,1)^\ell$.  By o-minimality the limits $a_i = \lim_{t\to +\infty} f_i(t)$ exist in $M^\ell \cap [-1,1]^\ell$. Let $D_\infty = \{a_1, \ldots, a_k\} \subseteq [-1,1]^\ell$. Then for every $\eps \in M$ there is $r \in M$ such that the implication \[ \left(x \in W \wedge |x|>r\right) \implies \bigvee_{i=1}^k |\pi(x) - a_i| \leq \eps   \] holds for every $x\in M^{n+\ell}$. By $A_{<\ell}$ and Proposition \ref{prop:al-implies-complexity} the implication holds for all $x \in N^{n+\ell}$, since it is expressible by a quantifier-free formula of complexity $< \ell$. 
   Since $(\alpha, \beta)$ is not $M$-bounded, we have $|(\alpha,\beta,\texp(\alpha))|>r $ for every $r \in M$. Since $\pi(\alpha, \beta, \texp(\alpha)) = \alpha$, by the above implication $\bigvee_{i=1}^k |\alpha - a_i| \leq \eps$ holds for all positive $\eps \in M$. It follows that $\alpha$ has a standard part in $\{a_1, \ldots, a_k\} \subset M$.  
\end{pf}

\begin{prop}\label{prop:M-bounded} 
 $A_{<\ell} \implies B_\ell$. 
\end{prop}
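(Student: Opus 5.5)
We argue by contradiction. Assume $A_{<\ell}$ and suppose that $(\alpha,\beta)\in N^n$ is an $(\ell,n)$-Khovanskii point over $M$ that is not $M$-bounded. We may take $\ell$ minimal with this property. First we reduce to the case where $\alpha_1,\dots,\alpha_\ell$ are $\Q$-linearly independent modulo $M$. If they are not, then after a permutation $\alpha_\ell$ lies in the $\Q$-span of the others modulo $M$. Lemma \ref{lem:eliminate-dependences-pre} then produces an $(\ell-1,n-1)$-Khovanskii point $(\alpha',\beta)$ over $M$, which is $M$-bounded by $A_{<\ell}$; in fact its coordinates lie in $M$. Since $\alpha$ is already bounded (it lies in $(-1,1)^\ell$) and $\beta$ is bounded, we get a contradiction. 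Next, by Lemma \ref{lem:standard}, $\alpha$ has a standard part $a\in M$. We keep the one-dimensional set $W$ of complexity $<\ell$ from the proof of that lemma. Recall that $W$ is the set of regular points of $V\cap G^{n,\ell,-}_{\exp}$, and that it contains $(\alpha,\beta,\texp(\alpha))$.

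Working in $M$, o-minimality lets us parametrize the unbounded part of $W$ by finitely many $M$-definable branches $\gamma_1,\dots,\gamma_k$ with $|\gamma_i(t)|=t$ for $t\ge t_0$. Each branch has coordinates $(x^{(i)}(t),z^{(i)}(t),y^{(i)}(t))$, where the $x$-part lies in $(-1,1)^\ell$ and has a limit in $M$. All these coordinate functions are polynomially bounded: the $x$- and $y$-parts are bounded, and the remaining coordinates are bounded by $t$. Hence their germs define elements of the residue field $R=\mathcal O/\mathfrak m$ of Definition \ref{defn:germs}, which carries the derivation $\delta$ of Lemma \ref{lem:germs}. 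On each branch we have $y_j=\texp(x_j)$ for $j<\ell$, so Lemma \ref{lem:germs}(\ref{diff-eq}) gives $\delta y_j/y_j=\delta x_j$ for $j<\ell$.

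We then split into two cases for each branch $i$, according to whether $[y^{(i)}_\ell]=[\texp(x^{(i)}_\ell)]$ in $R$.

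\emph{Case of equality.} Here all $\ell$ Ax relations hold in $R$. By Lemma \ref{lem:germs}(\ref{item:semi-closure}), the residue of the branch lies in the closure of the projection of $V(R)$ to the $(x,y)$-coordinates, a semialgebraic set of dimension $\le\ell$ defined over $M$. Corollary \ref{cor:uniform-ax} then gives a nonzero $u$ in a fixed finite set $\mathcal U\subseteq\Z^\ell$ with $\delta(u\cdot x)=0$. By Lemma \ref{lem:germs}(2), this means that $u\cdot x^{(i)}(t)-c_i$ is flat at $+\infty$ for some $c_i\in M$.

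\emph{Case of inequality.} Here there is $N_i\in\N$ with $|y^{(i)}_\ell-\texp(x^{(i)}_\ell)|>t^{-N_i}$ for all large $t$.

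Combining the cases, $M$ satisfies the following for every $m\in\N$: for every $x\in W$ with $|x|>r$, either $|u\cdot\pi(x)-c_i|<|x|^{-m}$ for some of the finitely many pairs $(u,c_i)$, or $|y_\ell-\texp(x_\ell)|>|x|^{-N}$. We now transfer this to $N$ and evaluate it at $(\alpha,\beta,\texp(\alpha))$, which has norm larger than every element of $M$. The second alternative fails there, since $y_\ell=\texp(\alpha_\ell)$ exactly. Hence $u\cdot\alpha-c$ is smaller than every positive element of $M$. To upgrade this to an exact relation, we intersect $W$ with the hypersurface $u\cdot x=c$; this is again of complexity $<\ell$. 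We would show that, in $M$, the branches of $W$ lie eventually on that hypersurface. That is again a statement of complexity $<\ell$, so it transfers to $N$ and yields $u\cdot\alpha=c\in M$. This contradicts the $\Q$-linear independence of $\alpha$ modulo $M$.

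The main obstacle is the transfer step. The statement that on $W$, for $|x|>r$, either $u\cdot\pi(x)$ is close to some $c_i$ or $|y_\ell-\texp(x_\ell)|>|x|^{-N}$ involves $\texp(x_\ell)$, so it has complexity $\ell$, and $A_{<\ell}$ does not apply to it directly. I would try to remove $\texp(x_\ell)$ using the standard part $a_\ell$. Writing $x_\ell=a_\ell+\varepsilon$, we have $\texp(x_\ell)=\Exp(a_\ell)\,\Exp(\varepsilon)$ with $\Exp(a_\ell)\in M$. By Proposition \ref{prop:Taylor}, $\Exp(\varepsilon)$ equals a polynomial in $\varepsilon$ up to an error controlled by $|\varepsilon|^{K}$. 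This reduces the condition to a semialgebraic one, of complexity $<\ell$, along $W$, provided $\varepsilon$ tends to $0$ at least polynomially in $|x|$ along each branch. Proving that polynomial decay is exactly where I expect the real difficulty, since polynomial boundedness of $\Tres$ is not available. If the decay is not polynomial, the branch gives $[x^{(i)}_\ell]=a_\ell$ in $R$, hence $\delta x^{(i)}_\ell=0$. That is itself a dependence over the constants, and the Ax step would then apply to the first $\ell-1$ coordinates.
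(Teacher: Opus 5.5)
\textbf{There is a genuine gap.} It sits where you suspect, but it cannot be closed along the lines you suggest. Both alternatives of your dichotomy measure smallness against the norm $|x|$ of points of $W$, and that parameter is unrelated to how fast the $x$-part approaches $a$.

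First, your fallback for the transfer is backwards. Replacing $\texp(x_\ell)$ by a Taylor polynomial at $a_\ell$ is harmless when $x^{(i)}_\ell-a_\ell$ decays at least polynomially in $t=|x|$, in particular when it is flat. The case that blocks the replacement is when $x^{(i)}_\ell-a_\ell$ tends to $0$ more slowly than every $t^{-c}$. But then $x^{(i)}_\ell-a_\ell$ is not flat, so $[x^{(i)}_\ell]\notin M=\ker\delta$, and no dependence arises. You get $\delta[x^{(i)}_\ell]=0$ only in the flat case, where you did not need it.

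Second, even if the transfer went through, the first alternative only says that $u\cdot\alpha-c$ is infinitesimal over $M$. That is fully compatible with $\ldim_\Q(\alpha/M)=\ell$: it just says $c=u\cdot\st(\alpha)$. Your upgrade, that the $M$-branches eventually lie on $u\cdot x=c$, needs flat $M$-definable functions to vanish eventually. That is polynomial boundedness again, so the Ax step as you use it never yields a contradiction.

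\textbf{The missing idea is to let $\mathcal U$ choose the parameter.} The paper proceeds as follows.
\begin{enumerate}
\item It first fixes the finite set $\mathcal U$ of Corollary~\ref{cor:uniform-ax} for $Z=\overline{\pi(V)}$. It then picks $d\in\mathcal U$ minimizing $|u\cdot\alpha-u\cdot a|$; this is non-zero by linear independence, and after permuting, $d_\ell\neq 0$.
\item Writing points as $(x,y,z)$ with $z$ the exponential coordinates, it considers the formulas $\chi_t$: $(x,y,z)\in W$, $|x-a|\le t^{-1}$, $|z-e^a|\le t^{-1}$, and $0<|d\cdot x-d\cdot a|\le|u\cdot x-u\cdot a|$ for all $u\in\mathcal U$. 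These have complexity $<\ell$ and hold at $(\alpha,\beta,e^\alpha)$, so by $A_{<\ell}$ they are satisfiable in $M$.
\item The $M$-solutions are parametrized by $t=|d\cdot x-d\cdot a|^{-1}$, not by the norm. Unboundedness is used only to obtain $a$ from Lemma~\ref{lem:standard}.
\item Along a branch $t\mapsto(\Phi(t),\Theta(t),\Psi(t))$ of these solutions, your equality case cannot occur. Indeed, $\delta(u\cdot[\Phi])=0$ would make $u\cdot\Phi-u\cdot a$ flat, hence by minimality of $d$ also $d\cdot\Phi-d\cdot a$, whose absolute value is $t^{-1}$.
\item Hence $[e^{\phi_\ell}]\neq[\psi_\ell]$, and so $|e^{d\cdot\Phi(t)}-\Psi(t)^d|\ge t^{-m}=|d\cdot\Phi(t)-d\cdot a|^m$ for some $m$. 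This is automatically on the scale of the Taylor error of $e^s$ at $s=d\cdot a$.
\item Let $h$ be the Taylor polynomial with error $|s-d\cdot a|^{m+1}$, with $m$ and $h$ depending on the branch. The semialgebraic set $S=\bigcup_f\{|h(d\cdot x)-z^d|>|d\cdot x-d\cdot a|^{m+1}\}$, over the finitely many branches $f$ and with $z^d=\prod_i z_i^{d_i}$, contains $\chi_t(M)$ for large $t$. It does not contain $(\alpha,\beta,e^\alpha)$, since there $z^d=e^{d\cdot\alpha}$ exactly.
\item So $\chi_t\wedge(x,y,z)\notin S$ has complexity $<\ell$, holds in $N$ and fails in $M$, contradicting $A_{<\ell}$.
\end{enumerate}
No exact relation $u\cdot\alpha=c$ is ever needed, and no polynomial decay has to be proved, because the decay is measured in the parameter itself.
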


\begin{proof} Assume $A_{<\ell}$ and let $P=(\alpha,\beta) \in N^n$ be a $(\ell,n)$-$\texp$-Khovanskii point over $M$. We need to prove that $P$ is $M$-bounded.

    If $\ell = 0$, $P$ is algebraic over $M$, so it belongs to $M$. So assume $\ell > 0$. By Lemma \ref{lem:eliminate-dependences-pre} we may assume $\ldim_{\mathbb{Q}}(\alpha/M)=\ell$. By Lemma \ref{lem:exists-variety}, $(\alpha,\beta,\texp(\alpha))$ lies in a transversal intersection between $G_{\exp}^{n,\ell}$ and an algebraic variety $V \subseteq N^{n+\ell}$ of dimension $\ell$. Assume for a contradiction that $(\alpha,\beta)$ is not $M$-bounded. Then, by Lemma \ref{lem:standard},  $\alpha$ has a standard part $a = \st(\alpha)$ in $M^\ell$. Moreover, since $\alpha \in (-1,1)^\ell$, we have $a=(a_1,\ldots,a_\ell)\in [-1,1]^\ell$. 
    
    Let $\pi$ denote the projection map that sends $(x_1,\dots,x_\ell,y_1,\dots,y_{n-\ell},z_1,\dots,z_{\ell})$ to $(x_1,\dots,x_{\ell},z_1,\dots,z_{\ell})$. So $\pi(\alpha, \beta, \texp(\alpha)) = (\alpha, \texp(\alpha))$. Let $Z:=\overline{\pi(V)}$ be the topological closure of $\pi(V)$. We can define $Z$ by a semialgebraic formula with parameters from $M$:
    $$(x,z) \in Z :\iff \forall \eps>0 \;\exists (x',y,z') \in V :  |(x,z) - (x',z')| < \eps$$
    where $x, z, x', z'$ are $\ell$-tuples and $y$ is a tuple of length $n-\ell$. 
    
    Let $\mathcal G_M$, $\mathcal O \subseteq \mathcal G_M$ and $R = \mathcal O/\mathfrak m$ be as in Definition \ref{defn:germs}. Since $\mathcal G_M$ and $R$ are real closed fields containing $M$, and $Z$ is a semialgebraic set defined over $M$, we can interpret $Z$ both in $\mathcal G_M$ and $R$. In any real closed field (and more generally in any o-minimal structure) the topological closure of a semialgebraic set has the same dimension as the set \cite[Chapter IV, (1.18)]{vdDries1998}. Moreover the dimension cannot increase under a projection, so $\dim Z \leq \dim V = \ell$.

    Consider the derivation $\delta:R\to R$ given by Lemma \ref{lem:germs}. Since $\dim Z \leq \ell$, 
    by Corollary \ref{cor:uniform-ax} there is a finite set $\mathcal{U} \subseteq \mathbb{Z}^d \setminus \{0\}$ such that, in $R$, for every $(x,z) = (x_1, \ldots, x_\ell, z_1, \ldots, z_\ell) \in Z$ with $\bigwedge_{i=1}^\ell \delta(z_i)/z_i=\delta(x_i)$, there is $u \in \mathcal{U}$ such that $\delta(u \cdot x)=0$, where $u\cdot x = \sum_{i=1}^\ell u_i x_i$ is the scalar product. 

Now recall that $a = \st(\alpha)\in M^\ell$. Since $\mathcal U$ is finite, there is $d = (d_1, \ldots, d_\ell) \in \mathcal U$ such that for all $u\in \mathcal U$ we have $|d \cdot \alpha - d \cdot a| \leq |u \cdot \alpha - u \cdot a|$. Since $\ldim_{\mathbb{Q}}(\alpha/M)=\ell$, we have $|d \cdot \alpha - d \cdot a| \neq 0$. By permuting the coordinates of $\alpha$, we may assume that $d_\ell \neq 0$. 

Since $a \in [-1,1]^\ell$ and $d\in \Z^\ell$, there is some $n_0\in \N$ such that $|d\cdot a| < n_0$ and $|d_i a_i| <n_0$ for $i=1, \ldots, \ell$. We can then define $e^x$ as $\texp(x/n_0)^{n_0}$, so that the function $x \mapsto e^x$ is smooth on $(-n_0,n_0)$ and satisfies $e^{x+y}=e^xe^y$ whenever $x,y,x+y$ are all in $(-n_0,n_0)$ (Propositions \ref{prop:exp-finite} and \ref{prop:exp-finite-2}).

Let \[W \subseteq V \cap G_{\exp}^{n,\ell,-}\] denote the set of points of transversal intersection between $G_{\exp}^{n,\ell,-}$ and $V$, which is a $1$-dimensional definable smooth manifold containing $(\alpha,\beta,\texp(\alpha))$. 
    For every $t \in M^{>0}$, consider the quantifier-free formula $\chi_t(x,y,z)$ given by the conjunction of the following:
    \begin{itemize}
        \item[(i)] $(x,y,z) \in W$,
        \item[(ii)] $|x-a| \leq t^{-1} \wedge |z-e^a|\leq t^{-1}$,
        \item[(iii)] $\bigwedge_{u \in \mathcal{U}} 0<|d\cdot x - d\cdot a| \leq |u \cdot x - u \cdot a|$,
    \end{itemize}
    where $x,z$ are $\ell$-tuples and $y$ is a tuple of length $n-\ell$. 
    
    The Khovanskii point $(\alpha,\beta,e^\alpha) \in N^{n+\ell}$ satisfies $\chi_t$ for every $t \in M^{>0}$ (for point (iii) use $\ldim_\Q(\alpha/M) = \ell > 0$). 
    Since $\chi_t(x,y,z)$ has complexity $<\ell$ and we are assuming $A_{<\ell}$, it follows that the formula $\chi_t(x,y,z)$ is satisfiable in $M$ for every $t \in M^{>0}$. Since $\chi_{t'}(M) \subseteq \chi_{t}(M)$ for $t'>t$, in each $\chi_t(M)$ there are points $(x,y,z)$ with $x$ arbitrarily close to $a$ (by point (ii)), but different from $a$ (by point (iii)). It follows that, given $t_0\in M^{>0}$, there is $t_1 \in M^{>0}$ such that, for each $t \in M$ with $t\geq t_1$, there is $(x,y,z) \in \chi_{t_0}(M)$ such that $|d\cdot x - d\cdot a| = t^{-1}$. 
    
    The sets 
   \begin{equation}\label{e:Ltdef}
        L_t = \{(x,y,z) \in M^{n+\ell} : |d \cdot x - d\cdot a| = t^{-1}\}
   \end{equation}
    are all disjoint as $t$ varies, so for all $t$ outside a finite set the intersection $L_t \cap \chi_{t_0}(M)$ is finite (because by point (i) $\chi_{t_0}(M) \subseteq W$, which has dimension one). By o-minimality we can assume (increasing $t_1$ if necessary), that there is $k\in \N$ such that 
    \begin{equation}\label{e:Lcapchi}
    L_t \cap \chi_{t_0}(M) = \{f_1(t), \ldots, f_k(t)\}
    \end{equation}
    for each $t\geq t_1$, where $k\neq 0$ and $f_1, \ldots, f_k$ are continuous $M$-definable functions from $[t_1,+\infty)$ to $M^{n+\ell}$. Consider one of the functions 
    $$f\in \{f_1, \ldots, f_k\}.$$
    Since $f(t) \in \chi_{t_0}(M) \subseteq G^{n,\ell,-}_{\exp}$, we can write the $n+\ell$ components of $f(t)$ in the form  
    \begin{align*}f(t) &=  (\phi_1(t), \ldots, \phi_\ell(t), \theta_1(t), \ldots, \theta_{n-\ell}(t), \psi_1(t), \ldots, \psi_\ell(t)) \\
    & = (\Phi(t), \Theta(t), \Psi(t))
    \end{align*}
where $\phi_i(t) \in (-1,1)$ for $i=1, \ldots, \ell$ and $\psi_i(t) = e^{\phi_i(t)}$ for $i = 1, \ldots, \ell-1$. 

    By construction, 
    \begin{equation}\label{e:Phi}
    (\Phi(t), \Theta(t), \Psi(t)) \in L_t \cap \chi_{t_0}(M)
    \end{equation}
    for every sufficiently large $t\in M$. 
    By (i), for each $t\in M$ with $t\geq t_1$ we have $(\Phi(t), \Theta(t), \Psi(t)) \in V$, and hence \[(\germ(\Phi),\germ(\Theta),\germ(\Psi)) \in V(\mathcal G_M)\] so we have 
$$(\germ(\Phi), \germ(\Psi)) \in \pi(V)(\mathcal G_M).$$

 The components of $\Phi$ and $\Psi$ are bounded by (ii) (hence polynomially bounded), so $(\germ(\Phi),\germ(\Psi)) \in \mathcal{O}^{2\ell}$. 
 We may then take their residues $[\Phi] = \germ(\Phi)+\mathfrak{m}$ and $[\Psi] = \germ(\Psi) + \mathfrak{m}$ in $R = \mathcal O/\mathfrak{m}$.
By Lemma \ref{lem:germs}(\ref{item:semi-closure}), since $Z=\overline{\pi(V)}$, it follows that 
$$([\Phi],[\Psi]) \in Z(R).$$
Recall that $e^{\phi_i(t)} = \psi_i(t)$ for $i=1, \ldots, \ell-1$. 
   \begin{claim}\label{claim:diferent-residues}
        We have $[e^{\phi_\ell}] \neq [\psi_\ell]$.
    \end{claim} 
    \begin{pf}
    For a contradiction, assume $[e^{\phi_\ell}]=[\psi_\ell]$. Then $([\Phi],[e^{\Phi}]) = ([\Phi],[\Psi]) \in Z(R)$ and $\delta([\psi_i])/ [\psi_i]=\delta([\phi_i])$ for $i=1,\dots,\ell$ by Lemma \ref{lem:germs}(\ref{diff-eq}). By definition of $\mathcal{U}$, there is $u \in \mathcal{U}$ such that $\delta(u \cdot [\Phi])=0$, so there is a constant $c\in \ker(\delta) = M$ such that $u\cdot [\Phi] = c$. 
   This means that $\germ(u\cdot \Phi - c)$ is in the maximal ideal $\mathfrak m$, and in particular $u \cdot \Phi(t) - c$ tends to $0$ when $t\to +\infty$. 
    By (ii) $ \Phi(t) \to  a$, so we have that $c=u \cdot a$. By (iii) it follows that 
    \begin{align*}
        0&=|u \cdot [\Phi]-u \cdot a| \\
    &\geq |d \cdot [\Phi]-d \cdot a|, 
    \end{align*}
    and therefore $\germ(d \cdot \Phi - d \cdot a)$ is in the maximal ideal $\mathfrak{m}$. By (\ref{e:Phi}), $f(t) = (\Phi(t), \Theta(t), \Psi(t)) \in L_t$, so $|d \cdot \Phi(t)-d \cdot a|=t^{-1}$ by (\ref{e:Ltdef}), which is absurd since the germ of $t \mapsto t^{-1}$ is not in the maximal ideal.
    This proves the claim. 
    \end{pf}
    
        Let $r = \lim_{t\to +\infty} \prod_{i=1}^{\ell-1} \psi_i(t)^{d_i} \in M$.

        Then $r>0$ and for all sufficiently large $t\in M$ 
    \begin{align*}
        |e^{d \cdot \Phi(t)}-\Psi(t)^{d}| &= \left| e^{d_{1}\phi_1(t)}\cdots e^{d_{\ell}\phi_\ell(t)}-\psi_1(t)^{d_{1}} \cdots \psi_\ell(t)^{d_{\ell}} \right|\\
        &=\psi_1(t)^{d_1} \cdots \psi_{\ell-1}(t)^{d_{\ell-1}}
        \left|e^{d_{\ell}\phi_\ell(t)}-\psi_\ell(t)^{d_{\ell}}\right| \\
        & \geq \tfrac r 2 |e^{d_{\ell}\phi_\ell(t)}-\psi_\ell(t)^{d_{\ell}}|.
    \end{align*}
By Claim \ref{claim:diferent-residues} $[e^{\phi_\ell}] \neq [\psi_\ell]$ and since by assumption $d_\ell \neq 0$, we have $[e^{\phi_\ell}]^{d_\ell} \neq [\psi_\ell]^{d_\ell}$, namely the germ of $e^{d_\ell \phi_\ell} - \psi_\ell^{d_\ell}$ is not in the maximal ideal $\mathfrak m$. This implies that there is some $m\in \N$ such that for all sufficiently large $t\in M$ we have 
\[
\tfrac r 2 |e^{d_{\ell}\phi_\ell(t)}-\psi_\ell(t)^{d_{\ell}}| \geq t^{-m} = |d \cdot \Phi(t) - d \cdot a|^{m}.
\]
where the equality follows from (\ref{e:Ltdef}). 
Combining this with the previous inequalities we obtain  
\begin{equation}\label{e:psi-ineq}
    |e^{d \cdot \Phi(t)}-\Psi(t)^{d}| \geq |d \cdot \Phi(t) - d \cdot a|^{m}.
\end{equation}

    By the Taylor formula for $e^t=\texp(t/n_0)^{n_0}$ centered at $d \cdot a\in M \cap (-n_0,n_0)$ there is a polynomial $h$ over $M$ and some $\eps\in M^{>0}$ such that
    \begin{equation}\label{e:Taylor}
        |h(t)-e^t| \leq |t-d \cdot a|^{m+1} 
    \end{equation}
    whenever $|t- d\cdot a| < \eps$. This inequality is also valid for $t$ in the big model $N \supseteq M$. Since $\eps \in M$ and $a = \st(\alpha)$, the inequality holds for $t = d\cdot \alpha$, that is
\begin{equation}\label{e:khov-ineq}
    |h(d\cdot \alpha)-e^{d\cdot \alpha}| \leq |d\cdot \alpha - d \cdot a|^{m+1}.
\end{equation}
Recall that $m$, hence $h$, depend on the choice of $f \in \{f_1,\dots,f_k\}$. Define \[S_f:=\{ (x,y,z) : |h(d \cdot x) - z^{d}|>|d \cdot x - d \cdot a|^{m+1} \} \]
where $z^d = \prod_{i=1}^{\ell}z_i^{d_i}$. 
By (\ref{e:khov-ineq}) we have \[(\alpha,\beta,e^{\alpha}) \notin S_f.\] On the other hand
    for any sufficiently large $t\in M$ we have $0<|d\cdot \Phi(t) - d\cdot a| < \eps$ and, by (\ref{e:psi-ineq}) and (\ref{e:Taylor}),
    \begin{align*}
        |h(d \cdot \Phi(t)) - \Psi(t)^{d}| & = |h(d \cdot \Phi(t)) - e^{d \cdot \Phi(t)} +e^{d \cdot \Phi(t)} -\Psi(t)^{d}|\\
        & \geq |e^{d \cdot \Phi(t)} -\Psi(t)^{d}| - |h(d \cdot \Phi(t)) - e^{d \cdot \Phi(t)}| \\
        & \geq |d \cdot \Phi(t)- d \cdot a|^{m} - |d \cdot \Phi(t)- d \cdot a|^{m+1} \\
        &>|d \cdot \Phi(t)- d \cdot a|^{m+1}.
    \end{align*}
    So for every sufficiently large $t$ we have: \begin{equation}\label{e:f-in-sf}
        f(t)=(\Phi(t),\Theta(t),\Psi(t)) \in S_f.
    \end{equation}

    Now consider the union $S=\bigcup_{f} S_f$ where $f$ ranges in $\{f_1, \ldots, f_k\}$. Then $S$ does not contain $(\alpha,\beta,e^\alpha)$ and is semialgebraic over $M$. 
    
    \begin{claim} For all $t\in M$ sufficiently large, $\chi_t(M) \subseteq S$. 
    \end{claim}
    \begin{proof}
    By (\ref{e:Lcapchi}) and (\ref{e:f-in-sf}), $L_t \cap \chi_{t_0}(M) = \{f_1(t), \ldots, f_k(t)\} \subseteq S$ for all $t\in M$ sufficiently large, say $t\geq t_2$. Hence it suffices to show that, for all sufficiently large $t'$, we have $\chi_{t'}(M) \subseteq \bigcup_{t \geq t_2} L_t \cap \chi_{t_0}(M)$. To this end, we observe that if $t'$ is sufficiently large then $\forall x(0<|x-a| \leq t'^{-1} \implies |d\cdot x - d \cdot a|^{-1} \geq t_2)$ holds, hence if $x \in \chi_{t'}(M)$ then $x \in L_{t} \cap \chi_{t_0}(M)$ with $t=|d \cdot x -d \cdot a|^{-1} \geq t_2$.
    \end{proof}
    
    We have thus proved that for all sufficiently large $t$ the quantifier-free formula $\chi_t(x,y,z) \wedge (x,y,z) \notin S$ is not satisfiable in $M$. However it is satisfied in $N$ by $(\alpha,\beta,e^\alpha)$, and since it has complexity $<\ell$ we have a contradiction with the hypothesis $A_{<\ell}$ and Proposition \ref{prop:al-implies-complexity}.
\end{proof}

\begin{cor}\label{cor:no-st-part}
    Assume $A_{<\ell}$. Let $Q=(q_1,\dots,q_n)$ be an $(\ell,n)$-$\texp$-Khovanskii point over $M$. If one of the coordinates $q_i$ has a standard part in $M$, then $q_i \in M$. 
\end{cor}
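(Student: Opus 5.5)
The idea is to use the scaling trick. Assume $q_i$ has standard part $c=\st(q_i)\in M$ but $q_i\notin M$. Then $q_i-c\neq 0$ is smaller in absolute value than every positive element of $M$. So $\gamma := 1/(q_i-c)$ is not $M$-bounded. We will exhibit $\gamma$ as a coordinate of an $(\ell,n+1)$-Khovanskii point over $M$. Proposition \ref{prop:M-bounded} then gives $B_\ell$ under $A_{<\ell}$, and this contradicts the unboundedness of $\gamma$.

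Concretely, let $Q$ be a regular zero in $U_{\ell,n}$ of $(\ell,n)$-$\E$-polynomials $f_1,\dots,f_n$ over $M$. Add a new unrestricted variable $w$, placed after the existing ones so that the first $\ell$ variables are still the restricted ones. Add the polynomial equation
\[
f_{n+1}(x,w) := w\,(x_i - c) - 1 = 0 .
\]
This is an $(\ell,n+1)$-$\E$-polynomial over $M$, since it does not involve $\texp$, and $(Q,\gamma)\in U_{\ell,n+1}$ is a common zero of $f_1,\dots,f_{n+1}$. For regularity, note that the Jacobian of the enlarged system is block lower-triangular. Its upper-left block is $J(f_1,\dots,f_n)(Q)$, and the new column for $\partial/\partial w$ is zero in the first $n$ rows and has entry $q_i-c$ in the last row. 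Hence its determinant equals $(q_i-c)\det J(f_1,\dots,f_n)(Q)$, which is nonzero. So $(Q,\gamma)$ is an $(\ell,n+1)$-Khovanskii point over $M$, of complexity $\le\ell$.

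By Proposition \ref{prop:M-bounded}, $A_{<\ell}$ implies $B_\ell$. Therefore $(Q,\gamma)$ is bounded in norm by some element of $M$, and in particular $|\gamma|\le r$ for some $r\in M$. This gives $|q_i-c|\ge 1/r$, which contradicts $|q_i-c|$ being below every positive element of $M$. Hence $q_i=c\in M$.

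There is no real obstacle. The only points to check are that the variable ordering convention is respected, with the new variable unrestricted and appended after the $\beta$'s, and that the regularity computation is right. Both are routine.
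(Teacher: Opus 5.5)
Your proposal is correct and is essentially the paper's own proof: append the equation $x_{n+1}(x_i-\st(q_i))-1=0$ to obtain an $(\ell,n+1)$-Khovanskii point over $M$ that is not $M$-bounded, contradicting $B_\ell$, which follows from $A_{<\ell}$ by Proposition~\ref{prop:M-bounded}. Your explicit block-triangular Jacobian computation supplies the regularity check that the paper leaves implicit.
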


\begin{proof}
    Assume $Q$ solves a Khovanskii system of $(\ell,n)$-$\texp$-polynomials $g_1,\dots,g_n$ and that some coordinate $q_i$ has a standard part $\st(q_i) \neq q_i$ in $M$. Consider the polynomial $g_{n+1}:=x_{n+1}(x_i-\st(q_i))-1$. Then $(q_1,\dots,q_n,1/(q_i-\st(q_i))$ solves the Khovanskii system given by the $(\ell,n+1)$-$\texp$-polynomials $g_1,\dots,g_{n+1}$, so it is a Khovanskii point of complexity $\leq \ell$ which is not $M$-bounded. This contradicts the fact that $A_{<\ell}$ implies $B_\ell$ (Proposition \ref{prop:M-bounded}).
\end{proof}

\section{Model completeness} \label{sec:model-complete}

The next two propositions are a reformulation of \cite[Lemma 2.8 and Lemma 6.3]{Wilkie1996}, although the hypotheses are different. We include proofs to keep the paper self-contained. As in the previous section, we fix models $M\subset N$ of $\Tres$. 

\begin{prop}\label{prop:enumerate-multifunctions} Assume $A_{<\ell}$. 
    Let $Q \in N^n$ be an $(\ell,n)$-Khovanskii point given by a system of $(\ell,n)$-$\texp$-polynomials $g_1, \ldots, g_{n}$ over $M$.  Suppose that: 
    \begin{itemize}
        \item[(i)] $g_1, \ldots, g_{n-1}$ have complexity $\leq \ell-1$. 
        \item[(ii)]  There is $i \in \{1,\dots,n\}$ such that   $\det\left(\frac{\partial(g_1, \ldots, g_{n-1})}{\partial(x_1, \ldots,x_{i-1},x_{i+1}, \dots, x_n)}\right)(x) \neq 0$ for all $x\in V_{\ell,n}(g_1, \ldots, g_{n-1})$.
        \item[(iii)] $\det\left(\frac{\partial(g_1, \ldots, g_{n})}{\partial(x_1, \ldots, x_n)}\right)(x) > 0$ for all $x \in V_{\ell,n}(g_1,\dots,g_n)$.
    \end{itemize}
     Then $Q \in M^n$.
\end{prop}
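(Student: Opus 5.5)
The plan is to reduce the statement to the inductive hypothesis $A_{<\ell}$ by cutting the curve $C := V_{\ell,n}(g_1,\dots,g_{n-1})$ with a hyperplane $x_i = c$, $c \in M$. Write $Q = (q_1,\dots,q_n)$ and let $i$ be the index from (ii).

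\emph{Reduction.} By (i) and (ii), $C$ is a definable $1$-dimensional $C^1$-manifold in both $M$ and $N$. It is defined by equations of complexity $\leq \ell-1$, and the projection $x \mapsto x_i$ restricted to $C$ is a local diffeomorphism. I claim it suffices to show that $q_i \in M$. Indeed, if $q_i \in M$, consider the system $g_1,\dots,g_{n-1}, x_i - q_i$. Its Jacobian determinant equals, up to sign, the minor in (ii), so it is non-zero at $Q$. Every polynomial in it has complexity $\leq \ell-1$: we reorder so that the variable $x_\ell$, which no longer occurs inside $\texp$, is treated as unrestricted, and we add the harmless condition $x_\ell \in (-1,1)$. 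Hence $Q$ is a Khovanskii point of complexity $<\ell$, and $A_{<\ell}$ gives $Q \in M^n$. By Proposition \ref{prop:M-bounded}, $Q$ is $M$-bounded. By Corollary \ref{cor:no-st-part}, it is enough to show that $q_i$ has a standard part in $M$. So I will assume for a contradiction that $q_i$ realises a bounded cut of $M$ with no standard part.

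\emph{Monotonicity along $C$.} Orient $C$ by the vector field $v(x)$ whose $j$-th component is the signed $(n-1)$-minor of $J(g_1,\dots,g_{n-1})(x)$ obtained by deleting column $j$ (the cofactor/cross-product vector). Then $v$ is tangent to $C$, its $i$-th component is non-zero everywhere on $C$ by (ii), and the derivative of $g_n$ along $v$ equals $\det J(g_1,\dots,g_n)$. By (iii), every zero of $g_n$ on $C$ is a crossing from negative to positive in the direction of $v$. Consequently, on each arc of $C$ parametrised by $x_i$, $g_n$ has at most one zero, and its sign at the two ends of such an arc determines whether a zero exists.

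\emph{Transfer and contradiction (main obstacle).} For $r \in M$, the fibre $C_r = C \cap \{x_i = r\}$ consists of Khovanskii points of complexity $<\ell$, by the same Jacobian computation as in the reduction. So $C_r(N) = C_r(M)$ is finite, and the number of its points, together with the signs of $g_n$ at them, can be read in $M$. By o-minimality in $M$, there is an interval $(c,c')$ of $M$ containing the cut of $q_i$ on which $C$ is a union of $k$ graphs $x_i \mapsto \gamma_j(x_i)$. On each such graph, $g_n \circ \gamma_j$ has constant non-zero sign in $M$, since there are no $M$-zeros over $(c,c')$ apart from finitely many that we avoid by shrinking the interval. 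The branch through $Q$ in $N$ passes over $q_i$. Taking $r < q_i < r'$ in $(c,c')\cap M$, the points of $C$ over $r$ and over $r'$ lying on this branch are in $M$, so the signs of $g_n$ at them are the same on both sides. By the monotonicity just proved, this forces $g_n$ to have no zero on this branch between $r$ and $r'$, contradicting $g_n(Q)=0$.

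The delicate point is identifying the branch through $Q$ in $N$ with one of the $M$-branches $\gamma_j$ over all of $(r,r')$. This amounts to showing that the number of points of $C$ over each $x_i \in (r,r')$ is still $k$ in $N$, i.e.\ that no new component of $C$ appears in $N$. I would obtain this from $A_{<\ell}$ via Proposition \ref{prop:al-implies-complexity} applied to the quantifier-free, complexity $<\ell$ formula asserting the existence of a $(k+1)$-st point of $C$ over some $x_i \in (r,r')$. If that does not suffice, I would use an $N$-definable continuity argument along the branch.
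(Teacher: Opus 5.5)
There is a genuine gap, at exactly the step you flag, and the remedy you propose does not work. The sentence ``some fibre of $C$ over $(r,r')$ has $k+1$ distinct points'' is the existential closure of a quantifier-free formula in $k+1$ independent copies of the point variables, with $\texp$ applied to the restricted coordinates of every copy. Its complexity therefore grows with $k$ (up to about $(k+1)(\ell-1)$), so in general it is not $<\ell$, and $A_{<\ell}$ together with Proposition~\ref{prop:al-implies-complexity} cannot transfer it. These tools transfer statements about one point at a time, which is why fibres over elements of $M$ agree in $M$ and $N$; they do not transfer counting statements.

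Without control of the fibres in $N$, nothing prevents the $N$-branch through $Q$ from running off to infinity, or to the boundary of $(-1,1)^\ell$, over some $x_i$ in the same cut of $M$ as $q_i$. Your interval $(c,c')$ was chosen by looking at $M$ alone. Even a constant fibre count would not suffice. Since $M\preceq N$ is not yet known, the $N$-branches are related to the $M$-definable $\gamma_j$ only through their common points over elements of $M$. A priori the pairing of points over $r$ with points over $r'$ induced by the $N$-branches can differ from the pairing induced by the $\gamma_j$; the paper explicitly allows two different permutations $\sigma\neq\sigma'$. So your claim that $g_n$ has the same sign at the two ends of the branch through $Q$ is unjustified.

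The paper supplies the two ideas you are missing.

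\emph{Trapping the branches.} It works in $N$. Take $c\in M$ larger than the norms of the $M$-bounded points of $C$ over $q_i$ (among them $Q$), and such that $V(c)=\{x\in C: |y|=c\}$ is finite, where $y$ denotes the coordinates other than $x_i$. Then $V(c)\subseteq M^n$ by $A_{<\ell}$, and its projection $D$ is a finite subset of $M$ not containing $q_i$. Corollary~\ref{cor:no-st-part} then yields $a<q_i<b$ in $M$ with $[a,b]\cap D=\emptyset$. In the box $[a,b]\times\{|y|\le c\}$ no point of $C$ lies on the sphere $|y|=c$. Hence the fibre cardinality is open and closed on $[a,b]$, so it is constantly $k$ in $N$. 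It is also $k$ in $M$, because fibres over elements of $M$ coincide.

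\emph{Avoiding branch matching.} It counts the zeros of $g_n$ in the box as $C_1-D_1+C_2-D_2$. This number is determined by the signs of $g_n$ and of the minor in (ii) at the points over $a$ and $b$, all of which lie in $M$, and it does not depend on how the branches pair those points. It is therefore the same in $M$ and in $N$, so every zero in $N$, in particular $Q$, lies in $M^n$.

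Your monotonicity analysis is exactly the paper's Cramer's-rule computation. What your argument lacks is this box and the pairing-independent count.
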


\begin{proof}  Let $V:=V_{\ell,n}(g_1,\dots,g_{n-1})$. By assumption (ii), every point of $V$ is regular, so $V$ has dimension $1$. For simplicity we assume the index $i$ in (ii) is $1$, the other cases being similar. Thus (ii) becomes $\det\left(\frac{\partial(g_1, \ldots, g_{n-1})}{\partial(x_2, \dots, x_n)}\right)(x) \neq 0$ for all $x\in V$. This implies that 
the projection $\pi:V\to N$ on the first coordinate is a local diffeomorphism and therefore it has finite fibers by o-minimality.

     By $A_{<\ell}$ and Proposition \ref{prop:M-bounded}, every Khovanskii point over $M$ of complexity $\leq \ell$ is $M$-bounded, so the coordinates $(q_1,\dots,q_n)$ of $Q$ are $M$-bounded.

\begin{claim} \label{claim:box}
    There are $k \in \N$, $a<b$ in $M$ and a positive $c \in M$ such that, letting $X = [a,b]$ and $Y = \{y\in N^{n-1}: |y|\leq c\}$, the set $F = (X\times Y) \cap V$ contains $Q$ and it can be written as a disjoint union $\bigcup_{i=1}^k f_i$ where each $f_i$ is an $N$-definable continuous differentiable function from $X$ to $Y$. 
    
    Similarly, $F \cap M^n$ can be written as a disjoint union $\bigcup_{i=1}^k f'_i$ where each $f'_i$ is an $M$-definable continuous differentiable function from $X$ to $Y$ (interpreted in $M$). 
\end{claim}
Note that we are not claiming that $f_i'$ is the trace of $f_i$ in the small model $M$, and in fact a priori the trace may not be definable.  

\begin{proof}[Proof of claim.] Work in the big model $N$. Since $V$ has dimension $1$, for all but finitely many $r \in N$ the set \[V(r) = \{(x,y) \in V: x\in N, y \in N^{n-1}, |y|=r \}\] is finite.
Let $\xi_1, \ldots, \xi_k\in N^{n-1}$ be pairwise distinct points such that  the $M$-bounded points of $\pi^{-1}(q_1)$ are exactly $(q_1,\xi_1),\dots,(q_1,\xi_k)$.

Let $c\in M$ be strictly larger than $\max_{i=1}^k |\xi_i|$ and such that $V(c)$ is finite. Since $V(c)$ is defined by a formula of complexity $<\ell$ with parameters in $M$, it follows by $A_{<\ell}$ and Proposition \ref{prop:al-implies-complexity} that $V(c) \subseteq M^n$. 

Consider the projection $D = \pi(V(c))$ on the first coordinate. Then $D$ is a finite subset of $(-1,1)\cap M$, and it does not contain $q_1$ by the choice of $c$. Let $I$ be the maximal open sub-interval of $(-1,1)$ containing $q_1$ and disjoint from $D$. If $q_1$ has standard part in $M$ equal to one of the endpoints of $I$, then by Corollary \ref{cor:no-st-part} it coincides with that endpoint, which is absurd since $I$ is open and $q_1 \in I$. It follows that there is a closed sub-interval 
$$X=[a,b] \subseteq I,$$ 
with endpoints in $M$, such that $q_1 \in X$. 
 Define 
$$Y = \{y\in N^{n-1}: |y|\leq c\}, \qquad F = (X\times Y) \cap V.$$ Since $X \cap D = \emptyset$, there are no points $(x,y)\in F$ with $|y| = c$.

For $m\in \N$, let $X_{m} \subseteq X$ be the set of all $x\in X$ such that the fiber $\pi^{-1}(x) \cap F$ has cardinality $ \geq m$. 
Note that $\pi: F \to X$ is a local diffeomorphism by (ii), so $X_{m}$ is open in $X$ for all $m$. 

We will show that $X_{m}$ is closed. To this end consider a point $x_0\in X$ in the closure of $X_{m}$. Then there are an open interval $U\subset X_m$ having $x_0$ in its boundary and definable continuous functions $h_1, \ldots, h_m$ from $U$ to $Y$ such that for every $x\in U$, the fiber $\pi^{-1}(x) \cap F$ contains $\{(x,h_1(x)), \ldots, (x,h_m(x))\}$ and these $m$ points are distinct. Since $Y$ is closed and bounded, the limits $a_i = \lim_{x\to x_0, x\in U} h_i(x) \in N^{n-1}$ exist and are in $Y$. Moreover these limits must be distinct because if, say, $a_i = a_j$ for some $j\neq i$, then $\pi:F \to X$ would not be a local diffeomorphism at $(x_0,a_i)$. 

We have thus proved that each $X_m$ is clopen in $X$, so it is either $X$ or empty. Since $q_1 \in X_k \setminus X_{k+1}$, all the fibers $\pi^{-1}(x) \cap F$ have cardinality $k$. It follows that there are $N$-definable functions $h_1,\dots,h_k:X \to Y$ such that for every $x \in X$ we have $\pi^{-1}(x) \cap F=\{(x,h_1(x)),\dots,(x,h_k(x))\}$. These functions can be chosen to be continuous on each cell of a definable cell decomposition of the interval $X =[a,b]$. By a gluing argument, it follows that there are $N$-definable continuous functions $f_1,\dots,f_k:X \to Y$ such that each $f_i$ coincides on any given cell of the decomposition with one of the $h_j$ and $\bigcup_{i=1}^k h_i = \bigcup_{i=1}^k f_i = F$. 

Granted the continuity, $f_1, \ldots, f_k$ are also differentiable by the o-minimal implicit function theorem \cite[Chapter V, (2.11)]{vdDries1998} since they are sections of the local diffeomorphism $\pi:F\to X$.

To prove the second part, we first observe that for $x \in M$ every fiber $\pi^{-1}(x) \cap F$ is a finite set defined by a quantifier-free formula over $M$ of complexity $<\ell$, so by $A_{<\ell}$ and Proposition \ref{prop:al-implies-complexity} we have $\pi^{-1}(x) \cap F \subseteq M^n$. It follows that all the fibers of $F \cap M^n$ also have cardinality $k$. Repeating the argument in the previous paragraph working in $M$, we obtain the $M$-definable functions $f_1',\dots,f_k'$.
\end{proof}
\begin{claim}
    $(X\times Y) \cap V_{\ell,n}(g_1, \ldots, g_n) \subset M^n$.
\end{claim}

\begin{proof} Recall that $X = [a,b]$ with $a,b\in M$. Since $q_1 \in (a,b)$, by Corollary \ref{cor:no-st-part} it does not have standard part in $M$ equal to $a$ or $b$. Hence we may shrink the interval if necessary, preserving the conclusions of Claim \ref{claim:box}, to assume that $g_n$ has no zeros on $F$ with first coordinate $a$ or $b$. Let \[\{\alpha_1,\dots,\alpha_k\} =\{x \in Y \mid (a,x) \in F\}\] \[\{\beta_1,\dots,\beta_k\}=\{x \in Y \mid (b,x) \in F\}.\] Since $a,b\in M$, by $A_{<\ell}$ and Proposition \ref{prop:al-implies-complexity}, $\alpha_1,\dots,\alpha_k,\beta_1,\dots,\beta_k$ lie in $M^{n-1}$. Let $J(x_1,\dots,x_n)$ be the $\texp$-polynomial $(-1)^{n+1}\det \left( \frac{\partial (g_1,\dots,g_{n-1})}{\partial(x_2,\dots,x_n)}\right)(x_1,\dots,x_n)$. Note that $J$ has complexity $\leq \ell-1$ by (i).  

    We claim that the number of zeros of $g_n$ on $F$  is given, both in $M$ and $N$, by the quantity $C_1 - D_1+ C_2-D_2$ where:
    \begin{itemize}
        \item $C_1$ is the number of indices $i$ such that $g_n(a,\alpha_i) <0$ and $J(a,\alpha_i)>0$. 
        \item $C_2$ is the number of indices $i$ such that $g_n(a,\alpha_i)>0$ and $J(a,\alpha_i)<0$.
        \item $D_1$ is the number of indices $i$ such that $g_n(b,\beta_i)<0$ and $J(b,\beta_i)>0$. 
        \item $D_2$ is the number of indices $i$ such that $g_n(b,\beta_i)>0$ and $J(b,\beta_i)<0$. 
    \end{itemize}
Consider first the model $N$. 
   Let $\{f_1,\dots,f_k\}$ be the $N$-definable functions obtained in Claim \ref{claim:box}. Denote by $\sigma$ the permutation on $\{1,\dots,k\}$ defined by $\sigma(i)=j$ if there is $\phi \in \{f_1,\dots,f_k\}$ such that $\alpha_i=\phi(a)$ and $\beta_j=\phi(b)$. Now fix $\phi \in\{f_1,\dots,f_k\}$. Consider the function sending $t \in [a,b]$ to $ J(t,\phi(t))$. By (ii), $J$ has no zeros on $V$, so $J(t,\phi(t))$ has the same sign for every $t \in [a,b]$.
    
    Write $\phi(t)=(\phi_2(t),\dots,\phi_n(t)).$ Then for every $i \in \{1,\dots,n\}$ we have \begin{align*}
        \frac{d}{dt} (g_i(t,\phi(t))) &=\frac{\partial}{\partial x_1}g_i (t,\phi(t)) + \sum_{j=2}^n \frac{\partial}{\partial x_j}g_i(t,\phi(t)) \frac{d}{dt} \phi_{j}(t).
    \end{align*}

    For $i=1,\dots,n-1$ we have that $g_i(t,\phi(t))$ is identically $0$ since every point of the form $(t,\phi(t))$ lies in $V=V_{\ell,n}(g_1,\dots,g_{n-1})$, so the expression above vanishes. Hence multiplying the matrix $\left(\frac{\partial (g_1,\dots,g_n)}{\partial(x_1,\dots,x_n)} \right) (t, \phi(t))$ by the vector $\left(1,\frac{d}{dt} \phi_{2}(t), \dots, \frac{d}{dt} \phi_{n}(t)\right)$ we obtain $\left(0,\dots,0, \frac{d}{dt}g_n(t,\phi(t)) \right).$ By Cramer's rule it follows that, for every $t$ such that $\det\left(\frac{\partial (g_1, \ldots, g_{n})}{\partial (x_1, \ldots, x_n)}\right)(t,\phi(t)) \neq 0$, the equality
   \begin{align*}
        1 &= (-1)^{n+1} \frac{d}{dt}g_n(t,\phi(t)) \left|\frac{\partial (g_1, \ldots, g_{n-1})}{\partial (x_2, \ldots, x_n)}(t,\phi(t))\right|\left|\frac{\partial (g_1, \ldots, g_{n})}{\partial (x_1, \ldots, x_n)}(t,\phi(t))\right|^{-1}
    \end{align*}
    holds, where $|A|= \det(A)$. 

    Hence by (iii), if $(t,\phi(t)) \in V_{\ell,n}(g_1,\dots,g_n)$ then $\frac{d}{dt}g_n(t,\phi(t))$ is non-zero and has the same sign as $J(t,\phi(t))$. This implies that $g_n(t,\phi(t))$ has at most one zero on $[a,b]$. More precisely, $g_n(t,\phi(t))$ has a zero on $[a,b]$ if and only if one of the following holds:
    \begin{itemize}
        \item $g_n(a,\alpha_i)<0$ and $g_n(b,\beta_{\sigma(i)})>0$, or
        \item $g_n(a,\alpha_i) >0$ and $g_n(b,\beta_{\sigma(i)})<0$.
    \end{itemize}
    Note that in the first case $J(a,\alpha_i)$ and $J(b,\beta_{\sigma(i)})$ are positive, giving a positive contribution to $C_1-D_1$, while in the second case $J(a,\alpha_i)$ and $J(b,\beta_{\sigma(i)})$ are negative, giving a positive contribution to $C_2-D_2$.
    
    Repeating the argument as $\phi$ varies in $\{f_1,\dots,f_k\}$, we see that the number of indices $i$ for which the first item holds is equal to $C_1-D_1$, while the number of indices for which the second item holds is equal to $C_2-D_2$. This shows that the number of zeros of $g_n$ in $(X \times Y) \cap V$ in the model $N$ is given by $C_1-D_1+C_2-D_2$. 
    
    To conclude the proof of the claim, we repeat the argument working in $M$. Now $\phi$ varies among the $M$-definable functions $\{f_1',\dots,f_k'\}$, and we obtain a different permutation $\sigma'$ on $\{1,\dots,k\}$. However it follows from the definitions that $C_1,C_2,D_1,D_2$ do not depend on the permutation, hence the number of zeros of $g_n$ in $(X \times Y) \cap V$ is the same in the two models.
    \end{proof}
    To conclude the proof it suffices to recall that $Q \in (X \times Y) \cap V_{\ell,n}(g_1, \ldots, g_n)$.
\end{proof}

\begin{prop}\label{prop:a<l-al}
    $A_{<\ell} \implies A_\ell$.
\end{prop}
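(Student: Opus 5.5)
The plan is to follow Wilkie's strategy (\cite[Lemma 6.3]{Wilkie1996}): reduce an arbitrary $(\ell,n)$-Khovanskii point to one satisfying the hypotheses of Proposition~\ref{prop:enumerate-multifunctions}, and then invoke that proposition. Let $Q=(\alpha,\beta)\in N^n$ be an $(\ell,n)$-Khovanskii point over $M$, regular for $g_1,\dots,g_n$. We want $Q\in M^n$.

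First, if $\alpha_1,\dots,\alpha_\ell$ are $\Q$-linearly dependent modulo $M$, Lemma~\ref{lem:eliminate-dependences-pre} (with a suitable permutation) gives an $(\ell-1,n-1)$-Khovanskii point $(\alpha',\beta)$ over $M$. By $A_{<\ell}$ it lies in $M$, so $\alpha_1,\dots,\alpha_{\ell-1}\in M$. Then $\alpha_\ell$ is a $\Q$-combination of these plus an element of $M$, hence also in $M$. So we may assume $\ldim_\Q(\alpha/M)=\ell$, although this reduction may not even be needed.

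Next, pass to the $1$-dimensional set obtained by forgetting the last equation $y_\ell=\texp(x_\ell)$, as in Lemma~\ref{lem:standard}. Work with $P=(\alpha,\beta,\texp(\alpha))\in N^{n+\ell}$, which is a regular zero of $p_1,\dots,p_n$ together with $y_i-\texp(x_i)$ for $i\le \ell$. Reorder the equations so that the first $n+\ell-1$ functions are $p_1,\dots,p_n$ and $y_i-\texp(x_i)$ for $i<\ell$; these have complexity $\le \ell-1$ in the variables $(x,y)$, provided we declare only $x_1,\dots,x_{\ell-1}$ as restricted and add the side condition $x_\ell\in(-1,1)$ separately. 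The last function is $g=y_\ell-\texp(x_\ell)$.

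Since the full Jacobian at $P$ is nonsingular, some $(n+\ell-1)$-minor of the first $n+\ell-1$ rows is nonzero at $P$; fix a column index $i$ with this property. To get condition (ii) of Proposition~\ref{prop:enumerate-multifunctions} on the whole variety, not just at $P$, add a new variable $w$ and the equation $w\cdot\det(\text{minor})-1=0$. This keeps complexity $\le\ell-1$ and keeps regularity at the extended point; the minor in the old variables, bordered by the $w$-column, is then nonzero everywhere on the new variety. Similarly, for condition (iii), add a variable $v$ with $v^2\cdot\det(\text{full Jacobian})-1=0$ (or $v\cdot\det-1$ with sign adjustments), so that the Jacobian is invertible on the zero set. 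Then fix its sign by replacing $g$ by $-g$ if needed, or by multiplying by the square of an auxiliary invertible quantity. The sign may vary between components, so I would instead restrict attention to the open set where $\det>0$ (or $<0$) via a variable $u$ with $u^2\det-1=0$. Afterwards I would check that the determinant of the enlarged system still has constant sign, computing it by cofactor expansion along the new rows.

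The main obstacle is carrying out these augmentations so that all three hypotheses hold simultaneously while the first $n-1$ functions keep complexity $\le\ell-1$ and the domain restriction $x_\ell\in(-1,1)$ is respected. I would handle the restriction by substituting $x_\ell=$ a semialgebraic diffeomorphic image of an unrestricted variable, but then $g$ involves $\texp$ of that expression. Once this is done, Proposition~\ref{prop:enumerate-multifunctions} gives the augmented point in $M$, and projecting yields $Q\in M^n$.
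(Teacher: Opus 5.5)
Your overall route is the paper's. You replace $\texp(x_\ell)$ by a new variable so that every equation except the last has complexity $\le\ell-1$. You adjoin auxiliary variables inverting a maximal minor (for (ii)) and the full Jacobian determinant (for (iii)). Then you apply Proposition~\ref{prop:enumerate-multifunctions} and project. The reduction to $\ldim_\Q(\alpha/M)=\ell$ is unnecessary, as you suspect. Your (ii)-step is correct: take a maximal minor $\mu$ of the first $n+\ell-1$ equations nonzero at $P$, and border it by $w$ via $w\mu-1$, which gives $\pm\mu^2$.

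However, the proposal stops exactly where the work is, and the devices you suggest for finishing do not work. First, the ``main obstacle'' is not one. Proposition~\ref{prop:enumerate-multifunctions} is stated for $V_{\ell,m}(\cdot)$, i.e.\ zero sets inside $U_{\ell,m}=(-1,1)^\ell\times N^{m-\ell}$. Hypothesis (i) only asks that $\texp(x_\ell)$ not occur in $g_1,\dots,g_{m-1}$. So $x_\ell$ simply stays among the first $\ell$ variables. The semialgebraic condition $x_\ell\in(-1,1)$ costs no complexity when Proposition~\ref{prop:al-implies-complexity} is invoked inside that proof. Your reparametrization $x_\ell=\sigma(s)$ would actually break the argument, since $y_\ell-\texp(\sigma(s))$ is not an $\texp$-polynomial.

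The real gap is (iii). An equation $v^2D-1$ (your $v$ and your $u$ alike) adds a column whose only nonzero entry is $2vD$, so the enlarged determinant is $2vD$ times the old one. Both $v=\pm D^{-1/2}$ lie on the zero set, so the sign is not constant, and negating $g$ flips every sign at once. (The paper's own sketch writes $x_{n+1}^2\det(\cdots)-1$ and needs the same correction.)

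The fix is to make the new equation linear in the new variable. Let $G$ be your first $n+\ell-1$ equations, $g=y_\ell-\texp(x_\ell)$, and $G_w=w\mu-1$. Let $\Delta=\det J(G,G_w,g)$. Let $\tilde\Delta$ be $\Delta$ with $\texp(x_\ell)$, which enters through the row of $g$, replaced by $y_\ell$; then $\tilde\Delta$ has complexity $\le\ell-1$ and equals $\Delta$ on $V(g)$. Adjoin $G_v=\pm(v\tilde\Delta-1)$ and order the system as $(G,G_w,G_v,g)$. The $v$-column has the single entry $\pm\tilde\Delta$, so the full determinant is $\epsilon\tilde\Delta\Delta$. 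On the whole zero set this equals $\epsilon\Delta^2$, with $\epsilon=\pm1$ depending only on positions and the chosen sign of $G_v$, so one choice of sign gives (iii). Expanding the (ii)-minor (omitting your column $i$) along the $v$-column and then the $w$-column (single entry $\mu$) gives $\pm\tilde\Delta\mu^2$, which is nonzero on $V(G,G_w,G_v)$. At the lifted point ($w=1/\mu$, $v=1/\Delta$) one has $\Delta=\pm\mu\det J(G,g)(P)\ne0$. So it is a Khovanskii point over $M$ projecting to $Q$. Proposition~\ref{prop:enumerate-multifunctions} then puts it in $M^{n+\ell+2}$, hence $Q\in M^n$.
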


\begin{proof}
    By Proposition \ref{prop:enumerate-multifunctions}, it suffices to show that for every $(\ell,n)$-$\texp$-Khovanskii point $Q=(q_1,\dots,q_n) \in  N^n$ there are $(\ell,m)$-$\texp$-polynomials $g_1,\dots,g_{m}$ and a $\texp$-Khovanskii point $Q' \in V_{\ell,m}^{\reg}(g_1,\dots,g_{m})$, for some $m\geq n$, such that $Q$ is a projection of $Q'$ and the following hold: 
    \begin{itemize}
        \item[(i)] $g_1, \ldots, g_{m-1}$ have complexity $\leq \ell-1$. 
        \item[(ii)] There is $i \in {1,\dots,m}$ such that    $\det\left(\frac{\partial(g_1, \ldots, g_{m-1})}{\partial(x_1, \ldots, x_{i-1},x_{i+1},\dots x_{m})}\right)(x) \neq 0$ for all $x\in V_{\ell,m}(g_1, \ldots, g_{m-1})$.
        \item[(iii)] $\det\left(\frac{\partial(g_1, \ldots, g_{m})}{\partial(x_1, \ldots, x_{m})}\right)(x) > 0$ for all $x \in V_{\ell,m}(g_1,\dots,g_{m})$.
    \end{itemize}

    So assume that $Q \in V_{\ell,n}^{\reg}(h_1,\dots,h_n)$ for $(\ell,n)$-$\texp$-polynomials $h_1,\dots,h_n$. Replacing $h_1$ by $-h_1$ if necessary, we may assume that $\det(\frac{\partial (h_1, \ldots, h_{n})}{\partial (x_1, \ldots, x_n)})(Q) > 0$. Let $h_{n+1}$ be the $(\ell,n+1)$-$\texp$-polynomial 
    \[x_{n+1}^2 \det\left(\frac{\partial (h_1, \ldots, h_{n})}{\partial (x_1, \ldots, x_n)}\right)(x_1,\dots,x_n) -1\]
    and let $q_{n+1}=\det(\frac{\partial (h_1, \ldots, h_{n})}{\partial (x_1, \ldots, x_n)})(Q)^{-1}$.
    
     Since $Q \in V_{\ell,n}^{\reg}(h_1,\dots,h_n)$, there is a coordinate $x_i$ for which the inequality $\det\left(\frac{\partial (h_1, \ldots, h_{n-1})}{\partial (x_1, \ldots,x_{i-1},x_{i+1},\dots, x_n)}\right)(Q) \neq 0$ holds. Let $h_{n+2}$ be the $(\ell,n+2)$-$\texp$-polynomial 
    \[ x_{n+2} \det\left(\frac{\partial (h_1, \ldots, h_{n-1})}{\partial (x_1, \ldots, x_{i-1},x_{i+1},\dots x_n)}\right)(x_1,\dots,x_n)-1\]
    and let $q_{n+2}=\det\left(\frac{\partial (h_1, \ldots, h_{n-1})}{\partial (x_1, \ldots,x_{i-1},x_{i+1},\dots, x_n)}\right)(Q)^{-1}$.
    
    Then $(Q,q_{n+1},q_{n+2}) \in V_{\ell,n+2}^{\reg}(h_1,\dots,h_{n+2})$. Now for each $i=1,\dots,n+2$, let $g_i$ be the $(\ell-1,n+3)$-$\texp$-polynomial obtained by replacing each occurrence of $\texp(x_\ell)$ in $h_i$ by a new variable $x_{n+3}$, let $g_{n+3}:=x_{n+3}-\texp(x_{\ell})$, and let $q_{n+3}=\texp(q_{\ell})$. It is now easy to check that $Q'=(Q,q_{n+1},q_{n+2},q_{n+3})$ and $g_1,\dots,g_{n+3}$ satisfy the requirements.
\end{proof}

\begin{cor}\label{cor:al-for-every-l}
For all $\ell\in \N$, $A_\ell$ holds.     
\end{cor}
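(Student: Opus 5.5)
The plan is to argue by induction on $\ell$, combining the two implications already established: Proposition \ref{prop:a<l-al} gives $A_{<\ell}\implies A_\ell$ for $\ell>0$, so it remains to verify the base case $A_0$.

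For the base case, let $Q\in N^n$ be a $(0,n)$-Khovanskii point over $M$. Then $Q$ is a regular solution of a system $f_1=\dots=f_n=0$, where each $f_i$ is an ordinary polynomial over $M$, since no $\texp$ occurs when $\ell=0$. The Jacobian is nonzero at $Q$, so $Q$ is an isolated point of the zero set. Hence $Q$ is algebraic over $M$, as in the $\ell=0$ case in the proof of Proposition \ref{prop:M-bounded}. Concretely, $V^{\reg}_{0,n}(f_1,\dots,f_n)$ is a finite semialgebraic set over $M$ by Proposition \ref{prop:dimension}. Since $M$ and $N$ are real closed and $\textsf{RCF}$ is model complete, $M\preceq N$ as ordered fields. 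The finite set defined over $M$ therefore has the same number of points in $M$ as in $N$, and all of its points in $N$ lie in $M^n$. Thus $Q\in M^n$ and $A_0$ holds.

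The inductive step is then immediate. If $A_{\ell-1}=A_{<\ell}$ holds, Proposition \ref{prop:a<l-al} yields $A_\ell$, and induction on $\ell$ gives $A_\ell$ for all $\ell\in\N$.

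There is no real obstacle here. The only point needing care is that the base case is not covered by Proposition \ref{prop:a<l-al}, since $A_{<0}$ is not defined, so it has to be handled separately using real closedness as above.
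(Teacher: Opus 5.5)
Your proposal is correct and follows the paper's own proof: you get the base case $A_0$ from model completeness of $\textsf{RCF}$, and the inductive step $A_{<\ell}\implies A_\ell$ from Proposition \ref{prop:a<l-al}. Your use of Proposition \ref{prop:dimension} to get a finite regular zero set, which then transfers by $M\preceq N$ as ordered fields, is a fine way to spell out the one-line base case in the paper.
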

\begin{proof}
    $A_0$ holds by model completeness of the theory of real closed fields. The inductive step $A_{<\ell} \implies A_{\ell}$ is Proposition \ref{prop:a<l-al}.
\end{proof}

\begin{thm}\label{thm:model-complete}
    The theory $\Tres$ of definably complete restricted exponential fields is model complete. 
\end{thm}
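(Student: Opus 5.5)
We will use the characterization of model completeness recalled in Section~\ref{sec:preliminaries}: it suffices to show that for every pair of models $M\subseteq N$ of $\Tres$, $M$ is existentially closed in $N$. That is, every existential formula $\exists \bar x\,\theta(\bar x,\bar a)$ with $\theta$ quantifier-free and parameters $\bar a$ from $M$ that holds in $N$ already holds in $M$. Equivalently, every quantifier-free formula with parameters in $M$ that is satisfiable in $N$ is satisfiable in $M$. The converse direction, from $M$ to $N$, is trivial, since $M$ is a substructure of $N$ and quantifier-free formulas are preserved.

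So fix $M\subseteq N$ models of $\Tres$ and a quantifier-free formula $\theta(\bar x)$ over $M$ that is satisfied in $N$. By Lemma~\ref{rem:complexity}, $\theta$ has complexity $\leq \ell$ for some $\ell\in\N$. Concretely, modulo $\Tres$ it is equivalent to a finite disjunction of formulas of the form $\exists\bar y\,(F_q=0\wedge\bigwedge_i z_i\in(-1,1))$ with $F_q$ an $\texp$-polynomial over $M$ of complexity $\leq \ell$. One of these disjuncts holds in $N$ at some tuple. We then drop the outer existential quantifier $\exists \bar y$, treating $\bar y$ as additional free variables. The resulting quantifier-free formula of complexity $\leq \ell$ is satisfiable in $N$.

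By Corollary~\ref{cor:al-for-every-l}, $A_\ell$ holds for the pair $M\subseteq N$. Proposition~\ref{prop:al-implies-complexity} then gives that this formula is satisfiable in $M$. Projecting away $\bar y$, the original disjunct, and hence $\theta$, is satisfiable in $M$. Here we use that the equivalence from Lemma~\ref{rem:complexity} holds in every model of $\Tres$, in particular in $M$.

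There is essentially no obstacle left, since all the work is in Corollary~\ref{cor:al-for-every-l}. The only point to check carefully is bookkeeping: the equivalence in Definition~\ref{def:complexity} is uniform in all models of $\Tres$, and the parameters introduced in the reduction stay in $M$. Both hold by construction, because the rewriting in Lemma~\ref{rem:complexity} introduces only new variables and no new constants.
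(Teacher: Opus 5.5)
Your proposal is correct and follows the paper's proof exactly: you reduce to showing that $M$ is existentially closed in $N$, use Lemma~\ref{rem:complexity} to see that the quantifier-free formula has complexity $\leq \ell$ for some $\ell$, and conclude by Corollary~\ref{cor:al-for-every-l} and Proposition~\ref{prop:al-implies-complexity}. Your extra bookkeeping about dropping $\exists \bar y$ and treating $\bar y$ as free variables only unfolds what the paper already does inside the proof of Proposition~\ref{prop:al-implies-complexity}.
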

\begin{pf}
 It suffices to show that whenever $M\subset N$ are models of $\Tres$, every quantifier-free formula over $M$ which is satisfiable in $N$ is satisfiable in $M$.   \deleted{It is enough to consider quantifier-free formulas without nested occurrences of $\texp$.} Any such formula \replaced{has }{is a disjunction of formulas of} complexity $\leq \ell$ for some $\ell$ by Lemma \ref{rem:complexity}, so we conclude by Proposition \ref{prop:al-implies-complexity} and Corollary \ref{cor:al-for-every-l}. 
\end{pf}

We conclude this section with the following corollary relating the pregeometry $\ecl$ with definable closure.

\begin{cor}\label{cor:definable-closure}
   Let $N\models \Tres$ and $C$ a subset of $N$. Then the definable closure of $C$ in $N$ coincides with $\ecl^N(C)$. 
\end{cor}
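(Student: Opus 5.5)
We prove the two inclusions separately; we write $\operatorname{dcl}$ for definable closure.

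\emph{The inclusion $\ecl^N(C)\subseteq \operatorname{dcl}(C)$.} Let $a\in\ecl^N(C)$. Then $a$ is a coordinate of a Khovanskii point $Q$ over the restricted exponential subfield $K$ generated by $C$. The elements of $K$ are values of terms in $C$, so $K\subseteq\operatorname{dcl}(C)$.
\begin{enumerate}
\item Say $Q\in V^{\reg}_{\ell,n}(f_1,\dots,f_n)$, where the $f_i$ are $\texp$-polynomials with coefficients in $K$.
\item By Proposition \ref{prop:dimension}, this set of regular zeros is finite.
\item The set is definable over $K$, hence over $C$.
\item A point of a finite set definable over $C$ can be isolated using the order: take the lexicographically $j$-th element. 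So every point of the set, in particular $Q$, lies in $\operatorname{dcl}(C)$.
\end{enumerate}
Hence $a\in\operatorname{dcl}(C)$.

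\emph{The inclusion $\operatorname{dcl}(C)\subseteq \ecl^N(C)$.} This is the substantive direction, and we plan to obtain it from model completeness (Theorem \ref{thm:model-complete}). Let $b\in\operatorname{dcl}(C)$, defined by $\varphi(x,\bar c)$ with $\bar c\in C$ and $N\models\exists!x\,\varphi(x,\bar c)$.
\begin{enumerate}
\item By Theorem \ref{thm:model-complete}, we may take $\varphi$ existential.
\item By Lemma \ref{rem:complexity} and Definition \ref{def:complexity}, $\varphi$ is a disjunction of formulas of the form $\exists\bar y\,(F_q=0\land\bigwedge z_i\in(-1,1))$.
\item So $b$ is the first coordinate of some point of an $\texp$-variety $V$ over $K$.
\item We cannot simply apply Proposition \ref{prop:servi} to $V$. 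It gives some Khovanskii point projecting into $V$, but not necessarily one whose first coordinate is $b$.
\item We therefore fix the first coordinate: consider the variety $V_b$ obtained by adding the equation $x=b$. Since $b$ is the unique witness, $V_b$ is just the fibre of $V$ over $b$. But $V_b$ is defined over $K(b)$, not over $K$, so this does not directly help.
\end{enumerate}

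\emph{Reduction to $\ecl$-closed subsets.} To get around this we use a model-theoretic argument. Let $E=\ecl^N(C)$; we aim to show $E=\operatorname{dcl}(E)$.
\begin{enumerate}
\item Show that $E$ is a substructure: it is a real closed subfield closed under $\texp$. This holds because $\ecl$-closed sets are closed under field operations, algebraic closure, and $\texp$ on $(-1,1)$, each being given by a simple Khovanskii system.
\item Show that $E$ is in fact a model of $\Tres$. It then follows from model completeness that $E\preceq N$. An elementary substructure is definably closed, so $\operatorname{dcl}(C)\subseteq E$.
\item The key step is to show that $E$ is existentially closed in $N$, which makes it a model. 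By Proposition \ref{prop:servi}, every $\texp$-variety over $E$ that is nonempty in $N$ contains the projection of a Khovanskii point over $E$. That point has coordinates in $\ecl^N(E)=E$ by Proposition \ref{prop:pregeometry}.
\item Hence every quantifier-free formula over $E$ that is satisfiable in $N$ is satisfiable in $E$. The reduction through Definition \ref{def:complexity} is used exactly as in Proposition \ref{prop:al-implies-complexity}, with $E$ in place of $M$.
\end{enumerate}

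\emph{Main obstacle.} The hard part is passing from ``$E$ is existentially closed in $N$'' to ``$E\models\Tres$'', since existential closedness alone does not give the universal axioms of definable completeness. We plan two steps:
\begin{enumerate}
\item Use model completeness to obtain that every formula over $E$ is equivalent in $N$ to an existential one. Combined with existential closedness, this lets us transfer each formula between $E$ and $N$ by induction on formulas, giving $E\preceq N$ by the Tarski--Vaught test.
\item For the Tarski--Vaught test we must check that every nonempty set definable over $E$ has a point in $E$. By model completeness such a set is the projection of an existentially defined set, so it contains the projection of a Khovanskii point over $E$, which lies in $E$.
\end{enumerate}
This yields $E\preceq N$ and completes the proof.
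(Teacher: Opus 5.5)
Your argument is correct, but the detour through $E=\ecl^N(C)\preceq N$ is prompted by an obstacle that does not exist. Removing that obstacle gives the paper's proof. In your step (4), $b$ is the \emph{unique} realization of $\varphi(x,\bar c)$, so each disjunct $\exists\bar y\,(F_q=0\wedge\bigwedge z_i\in(-1,1))$ defines either $\emptyset$ or $\{b\}$. Hence every point of the $\texp$-variety $V$ of a non-empty disjunct already has $x$-coordinate $b$, that is, $V=V_b$. Proposition \ref{prop:servi} applied to $V$ over $\Q(\bar c)$ then gives a Khovanskii point $P$ with $\pi(P)\in V$. So $b$ is a coordinate of $P$ and $b\in\ecl^N(C)$. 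This, together with your finite-set argument for the easy inclusion, is the whole of the paper's proof.

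Your alternative route is also valid. It shows that $E$ is a substructure, that $E$ is existentially closed in $N$ by Lemma \ref{rem:complexity}, Proposition \ref{prop:servi} and idempotence of $\ecl$, and hence that $E\preceq N$ by Tarski--Vaught together with model completeness in $N$. Only step (2) of your ``main obstacle'' is actually needed. The Tarski--Vaught criterion refers only to truth in $N$, so the definable completeness axioms never have to be checked in $E$, and the existential equivalents given by model completeness are only used inside $N$. What this route buys is more than the corollary: every $\ecl$-closed subset of $N$ is an elementary substructure, hence a model of $\Tres$. The paper needs exactly this for $\ecl^{\R_{\texp}}(\emptyset)$ in the proof of Theorem \ref{thm:complete-restricted}. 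There it obtains it from the corollary together with the implicitly used o-minimal fact that definable closures are elementary substructures. Your argument proves it directly, at the cost of length.
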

\begin{proof}
Let $a\in N$ be in the definable closure of $C$. Then there are a tuple $\bar c \subset C$ and a $\emptyset$-definable function $f$ with $a = f(\bar c)$. By model completeness, $\{a\}$ is existentially definable with parameter $\bar c$, so it is a projection of an $\texp$-variety $V$ defined over $\bar c$. By Proposition \ref{prop:servi}, $V$ contains the projection of a Khovanskii point $P$ over $\Q(\bar c)$, so $a$ must be a component of $P$. This shows that the definable closure of $C$ is contained in $\ecl^N(C)$ and the other inclusion is easy. 
\end{proof}

\section{Lifting Khovanskii points from the residue field}\label{sec:lifting}
In the rest of the paper we prove that, under a suitable hypothesis, the residue field of a convex subring of a model $N$ of $\Tres$ can be embedded in $N$. This will be used to prove the completeness of $\Tres$ and $\Tglob$ assuming $\SC$. 

Fix $N\models \Tres$ and a convex subring $A\subset N$.  

\begin{lem}\label{lem:newton}
Let $F \subseteq A$ be a subfield of $N$ and let $\underline F = \res(F) \subset \rf$ be its image under the residue map.

Let $(a,b) \in (\rf)^n$ be an $(\ell,n)$-Khovanskii point over $\underline F$ satisfying the $(\ell,n)$-Khovanskii system $(f_1, \ldots, f_n)$ in $\rf$. Then there is a unique $(\alpha,\beta) \in A^m$ with $\res(\alpha, \beta)=(a,b)$ that satisfies the corresponding system over $F$ (obtained by identifying $F$ with $\rs F$ via the residue map). 
\end{lem}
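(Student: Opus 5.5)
We argue by a Hensel/Newton-type lifting in the definably complete field $N$. Let $p_1,\dots,p_n$ be the polynomials over $F$ corresponding to the polynomials over $\rs F$ defining $f_1,\dots,f_n$, and let $g_i=F_{p_i}$ be the associated $(\ell,n)$-$\texp$-polynomials over $F$, regarded as smooth maps $g=(g_1,\dots,g_n)$ on $U_{\ell,n}\subseteq N^n$. Pick any lift $(\alpha_0,\beta_0)\in A^n$ of $(a,b)$. Since $a\in(-1,1)^\ell$ in $\rf$, the standard part of each $a_i$ is strictly inside $(-1,1)$ modulo $\mathfrak m_A$, so $\alpha_0\in(-1,1)^\ell$ and indeed a whole $\mathfrak m_A$-neighbourhood of $(\alpha_0,\beta_0)$ lies in $U_{\ell,n}$. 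By Corollary \ref{cor:induced-texp}, $\res(\texp(x))=\overline{\texp}(\res x)$ for $x$ in this region, so $\res(g(\alpha_0,\beta_0))=f(a,b)=0$ and $\res(\det J(g)(\alpha_0,\beta_0))=\det J(f)(a,b)\neq 0$. Here we use that the formal derivative rule $\partial\texp(x)/\partial x=\texp(x)$ commutes with taking residues, and that $g$ and $J(g)$ have coefficients in $F\subseteq A$ and arguments in $A$, hence values in $A$.

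For existence, we apply an inverse function argument rather than literally iterating Newton, since the iteration need not converge in a non-archimedean field. Set $u:=\det J(g)(\alpha_0,\beta_0)\in A^\times$. The Jacobian $J(g)$ is definable and continuous near $(\alpha_0,\beta_0)$, and its entries vary by elements of $\mathfrak m_A$ on the box $B=\{x:|x-(\alpha_0,\beta_0)|\le\eps\}$ for any $\eps\in\mathfrak m_A$ (second derivatives lie in $A$ by the Taylor formula, Proposition \ref{prop:Taylor}, applied along segments). Consider the definable map $T(x)=x-J(g)(\alpha_0,\beta_0)^{-1}g(x)$. By the mean value/Taylor estimate, $T$ is a contraction on $B$ with Lipschitz constant in $\mathfrak m_A$, and it maps $B$ into $B$ provided $|g(\alpha_0,\beta_0)|$ is sufficiently small compared to $\eps$. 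We may choose $\eps\in\mathfrak m_A$ with $|J^{-1}g(\alpha_0,\beta_0)|<\eps/2$, for instance $\eps=2|J^{-1}g(\alpha_0,\beta_0)|+$ something tiny; if that quantity is $0$ we are done.

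The main obstacle is obtaining a fixed point of $T$ in $B$ without metric completeness. For this we use definable completeness, which gives definable compactness of closed bounded boxes in the o-minimal structure $N$ (Theorem \ref{thm:FSH}). The definable function $x\mapsto|T(x)-x|$ attains a minimum on $B$ at some point $x^*$. If $T(x^*)\ne x^*$, then $|T(T(x^*))-T(x^*)|<|T(x^*)-x^*|$ by the contraction property, contradicting minimality. Hence $x^*$ is a zero of $g$ with $\res(x^*)=(a,b)$. At $x^*$ the residue of $\det J(g)$ is nonzero, so $x^*$ satisfies the Khovanskii system over $F$.

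For uniqueness, suppose $x,x'\in A^n$ are both zeros with residue $(a,b)$. Then they lie in a common convex region where $J(g)$ is congruent to $J(g)(\alpha_0,\beta_0)$ modulo $\mathfrak m_A$. Writing $0=g(x)-g(x')=\bigl(\int\text{-free}\bigr)$, via the mean value theorem applied coordinatewise, we get $0=M(x-x')$ with $M\equiv J(g)(\alpha_0,\beta_0)$ mod $\mathfrak m_A$, so $M$ is invertible and therefore $x=x'$. We note that the exponent $m$ in $A^m$ in the statement should read $n$.
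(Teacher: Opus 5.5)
Your proof is correct. It follows the same Hensel/Newton lifting idea as the paper, but it obtains existence by a different, self-contained route.

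The paper takes an arbitrary lift $(\alpha',\beta')$ and checks three things: $f_i(\alpha',\beta')\in\mathfrak m_A$, the Jacobian determinant there is a unit of $A$, and the second partial derivatives lie in $A$. It then invokes Servi's Newton's method for definably complete expansions of fields \cite[Theorem 7]{Servi2008} as a black box.

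You instead build the simplified Newton operator $T(x)=x-J_0^{-1}g(x)$ on a closed box of radius $\eps\in\mathfrak m_A$. You show it maps the box into itself with contraction constant in $\mathfrak m_A$, and get a fixed point by minimizing the definable continuous function $|T(x)-x|$ on the definably compact box. This replaces the cited theorem by an argument using only the mean value theorem and the extreme value theorem. In your version the second-derivative bound is not even needed. Every point of the box is itself a lift of $(a,b)$, so the residue computation you did at $(\alpha_0,\beta_0)$ applies there too: $J(g)$ is congruent to $J(f)(a,b)$ modulo $\mathfrak m_A$ on the whole box, with no Taylor estimate required.

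For uniqueness, you use the row-wise mean value theorem: the matrix whose $i$-th row is $\nabla g_i(\xi_i)$, for intermediate points $\xi_i$, has residue $J(f)(a,b)$ and is therefore invertible. This is the precise form of the paper's one-line remark that the Jacobian determinant would have to vanish somewhere on the segment joining two zeros. That remark is not a valid several-variable Rolle theorem as literally stated. For example, $(x,y)\mapsto(e^x\cos y-1,\,e^x\sin y)$ has Jacobian determinant $e^{2x}\neq 0$ everywhere but vanishes at both $(0,0)$ and $(0,2\pi)$. So your formulation is the one that actually works.

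Two small remarks. Your phrase about the ``standard part'' of $a_i$ would be cleaner as: every lift of $a_i$ lies at distance outside $\mathfrak m_A$ from $\pm 1$. You are right that $A^m$ in the statement should read $A^n$.
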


\begin{proof} Since $F$ is contained in $A$, the restriction of the residue map to $F$ is injective, so we can identify $F$ with $\underline F$ and consider $F$ as a common subfield of $N$ and $\rf$. Hence we may consider $f_1, \ldots, f_n$ as $\texp$-polynomials over $F$ and look for solutions in $N$. 

Let $(\alpha', \beta') \in A^m$ be an arbitrary point with $\res(\alpha',\beta')=(a,b)$. Points (1)--(3) below are all consequences of the fact that taking residues commutes with the ring operations and $\texp$ (Corollary \ref{cor:induced-texp}). 
   \begin{enumerate}
      \item $f_i(\alpha',\beta') \in \mathfrak{m}_A$ for $i=1,\dots,n$
      \item \label{point:Jac} For each $i,j$, $\res \left(\frac{\partial f_i}{\partial x_j}  (\alpha', \beta')\right)=\frac{\partial f_i}{\partial x_j} (a,b)$, hence the Jacobian determinant of $(f_1, \dots, f_n)$ computed at $(\alpha',\beta')$ does not lie in $\mathfrak{m}_A$.       
      \item For each $i,j,k$, $\frac {\partial f_i}{\partial x_j \partial x_k}(\alpha', \beta')\in A$ (since $(\alpha',\beta') \in A^n$ and the coefficients of the $\texp$-polynomials $\frac {\partial f_i}{\partial x_j \partial x_k}$ lie in $F$). 
\end{enumerate}
Intuitively this says that $(f_1, \ldots, f_n)(\alpha',\beta')$ is very small, the Jacobian determinant is not too small, and the second derivatives are not too large. It is easy to see that (1)--(3) ensure that the hypotheses of Newton's method for definably complete expansions of a field (see \cite[Theorem 7]{Servi2008}) are satisfied, so there is a zero $(\alpha, \beta)$ of $f_1,\dots,f_n$ with $\res(\alpha, \beta)=\res(\alpha',\beta') = (a,b)$. Moreover $(\alpha,\beta)$ must be a regular zero since its residue $(a,b)$ is a regular zero. Uniqueness follows from the fact that if there are two such zeros, then the Jacobian determinant must vanish at some point of the segment that joins them, but this would contradict point (\ref{point:Jac}) above.
\end{proof}
  
The next lemma is well known, see for instance \cite[Exercise 13.6]{Eisenbud1995}, so we omit the proof. The same result was used in the proof of Theorem 1.1 of \cite{Macintyre1996a}.   

\begin{lem}\label{lem:quasi-poly-division}
    Let $F$ be a field. 
		Suppose that $\mathfrak p$ is a prime ideal of the polynomial ring $F[x_1, \ldots, x_m]$ and that the field of fractions of $F[x_1, ..., x_m]/\mathfrak p$ has transcendence degree $\ell$ over $F$. Then there is $h\in  F[x_1, \ldots, x_m]$ with $h\nin \mathfrak p$ and $m-\ell$ polynomials $h_1, \ldots, h_{m-\ell}$ in $\mathfrak{p}$, such that $h\mathfrak p \subseteq (h_1, \ldots, h_{m-\ell})$. 
\end{lem}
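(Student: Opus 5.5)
The plan is to reduce to the regular-sequence/complete-intersection structure at the generic point of $\mathfrak p$, i.e.\ localize at $\mathfrak p$ and then clear denominators. Put $P = F[x_1,\ldots,x_m]$ and $B = P_{\mathfrak p}$. Since $P$ is a finitely generated $F$-algebra that is a domain, $\dim P/\mathfrak p = \ell$ and $\operatorname{ht}\mathfrak p = m-\ell$. This follows from the dimension formula for affine domains, since the fraction field of $P/\mathfrak p$ has transcendence degree $\ell$. So $B$ is a regular local ring of dimension $m-\ell$: it is a localization of a regular ring, or one can argue directly, since $F$ has characteristic $0$ in our application, so $P/\mathfrak p$ is generically smooth.

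The key step is to choose $h_1,\ldots,h_{m-\ell}\in\mathfrak p$ whose images generate $\mathfrak pB$. Because $B$ is regular local of dimension $m-\ell$, its maximal ideal $\mathfrak pB$ is generated by $m-\ell$ elements. Each such generator has the form $g/s$ with $s\notin\mathfrak p$, so after multiplying by units of $B$ we may take the generators to be polynomials $h_1,\ldots,h_{m-\ell}\in\mathfrak p$. (Alternatively, in characteristic $0$ one can use the Jacobian criterion. Pick generators $p_1,\ldots,p_r$ of $\mathfrak p$, choose an $(m-\ell)\times(m-\ell)$ minor of the Jacobian not lying in $\mathfrak p$, and take the corresponding $p_i$'s. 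By Nakayama these generate $\mathfrak pB$.)

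It then remains to clear denominators. Write $\mathfrak p=(p_1,\ldots,p_r)$, which is possible by Noetherianity. Each $p_j$ lies in $\mathfrak pB=(h_1,\ldots,h_{m-\ell})B$, so there are $s_j\notin\mathfrak p$ with $s_jp_j\in(h_1,\ldots,h_{m-\ell})$. Put $h=\prod_j s_j$. Then $h\notin\mathfrak p$ because $\mathfrak p$ is prime, and $h\mathfrak p\subseteq(h_1,\ldots,h_{m-\ell})$, which proves the lemma.

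The only substantive input is that $B$ is regular of dimension exactly $m-\ell$. Equivalently, the minimal number of generators of $\mathfrak pB$ equals $\operatorname{ht}\mathfrak p$. This rests on the regularity of polynomial rings being preserved under localization and on the dimension formula $\operatorname{ht}\mathfrak p+\dim P/\mathfrak p=m$. I expect this to be the main point to cite carefully; everything else is bookkeeping.
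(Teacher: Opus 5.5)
Your argument is correct. The dimension theory of affine domains gives $\dim P/\mathfrak p=\ell$ and $\operatorname{ht}\mathfrak p=m-\ell$. So $B=P_{\mathfrak p}$ is regular local of dimension $m-\ell$, and $\mathfrak pB$ has $m-\ell$ generators, which can be taken in $\mathfrak p$ because every element of $\mathfrak pB$ has the form $a/s$ with $a\in\mathfrak p$. Your denominator-clearing step is also sound, since $P$ is a domain: $s_jp_j\in(h_1,\ldots,h_{m-\ell})B$ really gives $s_jp_j\in(h_1,\ldots,h_{m-\ell})$ in $P$.

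The paper does not prove this lemma. It calls it well known and refers to an exercise in Eisenbud, so there is no in-paper argument to match. Your route relies on two black boxes: the formula $\operatorname{ht}\mathfrak p+\dim P/\mathfrak p=m$, and regularity of a polynomial ring at a non-maximal prime (Serre's localization theorem, or the fact that $F[x]$ is regular at every prime). A more elementary route avoids both:
\begin{itemize}
\item By Noether normalization, choose coordinates with $P=F[y_1,\ldots,y_m]$, $\mathfrak p\cap F[y_1,\ldots,y_\ell]=0$ and $P/\mathfrak p$ finite over $F[y_1,\ldots,y_\ell]$. The number of normalizing variables is automatically the transcendence degree $\ell$.
\item Put $K=F(y_1,\ldots,y_\ell)$. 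Then $\mathfrak pK[y_{\ell+1},\ldots,y_m]$ is a maximal ideal of a polynomial ring in $m-\ell$ variables over a field, because the quotient is a domain integral over $K$.
\item Such an ideal is generated by $m-\ell$ triangular elements, by induction on the number of variables.
\item Clearing denominators gives $h\in F[y_1,\ldots,y_\ell]\setminus\{0\}$, which is automatically outside $\mathfrak p$.
\end{itemize}
Both routes reach the same conclusion over an arbitrary field. Yours is shorter given standard commutative algebra. The elementary one is self-contained and gives extra structure: triangular generators, and an $h$ that depends only on the normalizing variables.

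Two remarks on your asides. First, no characteristic assumption is needed. The lemma is stated for an arbitrary $F$ and your main argument works verbatim in any characteristic. Generic smoothness of $P/\mathfrak p$ is not what makes $P_{\mathfrak p}$ regular; that comes from the ambient polynomial ring. Second, the parenthetical Jacobian alternative is not really independent of the main route. A nonvanishing $(m-\ell)\times(m-\ell)$ minor shows that the chosen $p_i$ are linearly independent in $\mathfrak pB/\mathfrak p^2B$. To apply Nakayama you also need them to span, which requires $\dim_{k(\mathfrak p)}\mathfrak pB/\mathfrak p^2B\le m-\ell$, and that is again the regularity of $B$.
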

   
The next lemma is similar to Lemma \ref{lem:newton}, with the difference that we show, under a transcendence assumption, how to uniquely lift a Khovanskii point, without specifying the system.

\begin{lem} \label{lem:unique} 
Let $F \subseteq A$ be a subfield of $N$. Let $(a,b) \in (\rf)^n$ be an $(\ell,n)$-Khovanskii point over $\rs F = \res(F)$. Suppose that $\td(a,b, \texp(a)/\rs F) = \ell$. Then there is 
 a unique $(\alpha,\beta) \in A^n$ with $\res(\alpha, \beta)=(a,b)$ such that $F(\alpha, \beta, \texp(\alpha)) \subset A$.  
\end{lem}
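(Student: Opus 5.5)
Existence comes from Lemma \ref{lem:newton}. Fix an $(\ell,n)$-Khovanskii system $(f_1,\dots,f_n)$ over $\rs F$ solved by $(a,b)$ in $\rf$, and identify $F$ with $\rs F$ through the residue map. Lemma \ref{lem:newton} then gives $(\alpha,\beta)\in A^n$ with $\res(\alpha,\beta)=(a,b)$ that solves the same system in $N$. We must also show $F(\alpha,\beta,\texp(\alpha))\subset A$. Since $A$ is a convex subring, it is enough to show that every nonzero element of $F[\alpha,\beta,\texp(\alpha)]$ is a unit of $A$, i.e.\ has nonzero residue. Consider $p\in F[\bar x,\bar y]$ with $p(\alpha,\beta,\texp(\alpha))\neq 0$. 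By Corollary \ref{cor:induced-texp}, $\res(\texp(\alpha))=\texp(a)$, so $\res p(\alpha,\beta,\texp(\alpha))=p(a,b,\texp(a))$. By Lemma \ref{lem:exists-variety} applied in $N$, $\td(\alpha,\beta,\texp(\alpha)/F)\le\ell$. The residue map sends $F[\alpha,\beta,\texp(\alpha)]$ onto $\rs F[a,b,\texp(a)]$, and the hypothesis says the target has transcendence degree $\ell$. A surjection between domains that preserves transcendence degree $\ell$ has zero kernel. Hence $p(a,b,\texp(a))\neq0$, and the inclusion follows. One subtlety: a priori the residue map is defined only on $F[\alpha,\beta,\texp(\alpha)]\cap A$. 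But $\alpha,\beta\in A$, and $\texp(\alpha)\in A$ by Proposition \ref{prop:induced-texp}, so the whole polynomial ring lies in $A$ and the ring map is well defined.

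Uniqueness is the step I expect to be the main obstacle. The difficulty is that a second lift $(\alpha',\beta')$ need not satisfy our chosen system. Set $m=n+\ell$ and let $\mathfrak p\subset F[x_1,\dots,x_m]$ be the ideal of polynomials vanishing at $P=(a,b,\texp(a))$. It is prime of transcendence degree $\ell$. Apply Lemma \ref{lem:quasi-poly-division} to obtain $h\notin\mathfrak p$ and $h_1,\dots,h_{n}\in\mathfrak p$ with $h\mathfrak p\subseteq(h_1,\dots,h_n)$.

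Now let $(\alpha',\beta')$ be any lift with $F(\alpha',\beta',\texp(\alpha'))\subset A$, and write $P'=(\alpha',\beta',\texp(\alpha'))$. The argument of the previous paragraph, run in reverse, shows that the residue map is injective on $F[P']$. Therefore every $g\in\mathfrak p$ satisfies $g(P')=0$, because $g(P')$ has residue $g(P)=0$. So $P'$ lies in the zero set of $\mathfrak p$, which is contained in $V(h_1,\dots,h_n)$; the same holds for $(\alpha,\beta,\texp(\alpha))$.

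It remains to check that $P$ is a regular point of the system $h_1(\bar x,\texp(\bar x),\bar y)=\dots=h_n(\bar x,\texp(\bar x),\bar y)=0$, viewed as $\texp$-polynomials in $n$ variables. The system $f_i$ has nonsingular Jacobian at $(a,b)$. Each $f_i$ corresponds to a polynomial $p_i\in\mathfrak p$, so near $P$ we have $h p_i\in(h_1,\dots,h_n)$, and $h(P)\neq0$. Differentiating $hp_i=\sum_j c_{ij}h_j$ at $P$, where all $h_j$ and $p_i$ vanish, gives $h(P)\,dp_i=\sum_j c_{ij}(P)\,dh_j$. Restricted to the graph of $\texp$, the differentials of the $f_i$ are therefore combinations of those of the $h_j$. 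Hence the $h_j$-system also has invertible Jacobian at $(a,b)$, so $(a,b)$ is a Khovanskii point for it. Both lifts solve this $h$-system over $F$ in $A^n$ and have residue $(a,b)$, so the uniqueness clause of Lemma \ref{lem:newton} gives $(\alpha',\beta')=(\alpha,\beta)$.
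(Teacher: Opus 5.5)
Your proof is correct. The uniqueness half is the paper's argument: the ideal $\mathfrak p$, the polynomials $h,h_1,\dots,h_n$ from Lemma \ref{lem:quasi-poly-division}, regularity of $(a,b)$ for the $h$-system, and the uniqueness clause of Lemma \ref{lem:newton}. Your existence half, however, takes a different route.

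The paper does not start from an arbitrary Khovanskii system. It applies Lemma \ref{lem:newton} directly to the $h$-system and notes that $h(\alpha,\beta,\texp(\alpha))\neq 0$, because its residue $h(a,b,\texp(a))$ is nonzero. Then $h\mathfrak p\subseteq(h_1,\dots,h_n)$ forces every $p\in\mathfrak p$ to vanish at $(\alpha,\beta,\texp(\alpha))$. This gives a ring section of the residue map, hence $F(\alpha,\beta,\texp(\alpha))\subset A$.

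You instead compare transcendence degrees. You use the upper bound $\td(\alpha,\beta,\texp(\alpha)/F)\le\ell$ from Lemma \ref{lem:exists-variety} in $N$; it applies because $\res(\alpha_i)=a_i\in(-1,1)$ forces $\alpha_i\in(-1,1)$. You combine this with the fact that a surjective homomorphism over $F\cong\rs F$ between domains of the same finite transcendence degree is injective. That fact is standard but deserves a line of proof. Lift a transcendence basis of the target; the lifts are algebraically independent, hence a transcendence basis of the source. For a nonzero $q$ in the kernel, take a minimal polynomial relation of $q$ over the polynomial ring in the lifts. Its constant term is nonzero (since $q\neq 0$ in a domain) and lies in the kernel. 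Yet it maps to a nonzero polynomial in the algebraically independent basis of the target, a contradiction.

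Your route gives slightly more: the Newton lift of \emph{any} Khovanskii system satisfied by $(a,b)$ has the required property, so the lift does not depend on the chosen system. The paper's route is more economical. The $h$-system is needed for uniqueness anyway and handles existence as well, without Lemma \ref{lem:exists-variety} or any dimension-theoretic fact.
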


\begin{proof} As in the proof of Lemma \ref{lem:newton} we identify $F$ and $\underline F$. 
    Let $\mathfrak p$ be the ideal of all polynomials in $F [x_1, \ldots, x_n, y_1, \ldots, y_\ell]$ that vanish at $(a,b, \texp(a))\in (\rf)^{n+\ell}$. Since $\td(a,b,\texp(a)/ F) =  \ell$, the fraction field of $ F [\bar x, \bar y]/\mathfrak p$ has transcendence degree $\ell$ over $F$, hence by Lemma \ref{lem:quasi-poly-division}, there are $h_1, \ldots, h_n\in \mathfrak p$ and $h\in F[\bar x, \bar y]$ not in $\mathfrak p$, such that $h\mathfrak p \subseteq (h_1, \ldots, h_n)$. 
    
    Since $(a,b)$ is an $(\ell,n)$-Khovanskii point over $F$, there are $p_1, \ldots, p_n \in \mathfrak p$ such that $(a,b)$ is a regular zero of $F_{p_1}, \ldots, F_{p_n}$. We claim that $(a,b)$ is a regular zero also of the system $F_{h_1}, \ldots, F_{h_n}$.
   
    By construction, $hp_1, \ldots, hp_n$ belong to $(h_1, \ldots, h_n)$. We can then write polynomial equations of the form $hp_i = \sum_{j=1}^n q_j h_j$. Substituting $\texp(x_i)$ for $y_i$ for $i=1, \ldots, \ell$ and differentiating, we obtain that $\nabla F_{p_1} (a,b), \ldots, \nabla F_{p_n} (a, b)$ lie in the $F$-linear span of $\{\nabla F_{h_1}(a,b),\dots, \nabla F_{h_n}(a,b) \}$. Since $(a,b)$ is a regular zero of $(F_{p_1}, \ldots, F_{p_n})$, the vectors $\nabla F_{p_i}(a,b)$ are $F$-linearly independent, and therefore the vectors $\nabla F_{h_i}(a,b)$ must also be $F$-linearly independent, namely $(a,b)$ is a regular zero of $F_{h_1}, \ldots, F_{h_n}$, as desired. 
    
    By Lemma \ref{lem:newton}, we can lift $(a,b) \in (\rf)^n$ to a unique (necessarily regular) zero $(\alpha,\beta) \in A^n$ of the system $F_{{h_1}},\ldots, F_{{h_n}}$, so in particular $(\alpha, \beta, \texp(\alpha))$ is a zero of $(h_1, \ldots, h_n)$. Let us also observe that $h(\alpha,\beta,\texp(\alpha)) \neq 0$ because its residue $h(a,b,\texp(a))$ is not zero (as $h\nin \mathfrak p$). 
    
    We claim that there is section
    $s: F[a,b,\texp(a)] \to F[\alpha,\beta,\texp(\alpha)]$ of the residue map that is a ring homomorphism. To this end, we must show that if $p$ is a polynomial over $F$ that vanishes at $(a,b,\texp(a))$, then $p$ also vanishes at $(\alpha, \beta, \texp(\alpha))$ when evaluated in $N$. Assuming the antecedent, we have $hp \in (h_1, \ldots, h_n)$. Since $h_i(\alpha, \beta,\texp(\alpha)) = 0$ for all $i$ and $h(\alpha, \beta, \texp(\alpha)) \neq 0$, it follows that $p(\alpha, \beta, \texp(\alpha)) = 0$, as desired. The existence of the section $s$ implies that 
    $F(\alpha,\beta,\texp(\alpha)) \subset A$.
    
    \deleted{It remains to observe that this inclusion implies that $(\alpha,\beta)$ is the unique lifting to $A^n$ of any $(\ell,n)$-Khovanskii system realized by $(a,b)$ in $\rf$, so $(\alpha,\beta)$ is uniquely determined.} \added{For the uniqueness, suppose $F(\alpha',\beta',\texp(\alpha')) \subset A$ and $\res(\alpha',\beta')=(a,b)$. It follows that, for every $i=1,\dots,n$, we have $\res(F_{h_i}(\alpha',\beta'))=F_{h_i}(a,b)=0$ and since $F(\alpha',\beta',\texp(\alpha')) \subset A$ this implies that $F_{h_i}(\alpha',\beta')=0$. Since the Jacobian of the system $F_{h_1}=\dots=F_{h_n}=0$ computed at $(a,b)$ is non-zero, it is also non-zero when computed at $(\alpha',\beta')$. Hence $(\alpha',\beta')$ is a solution of the same Khovanskii system as $(\alpha,\beta)$ with the same residue, so $(\alpha',\beta')=(\alpha,\beta)$ by Lemma \ref{lem:newton}.} 
\end{proof}

\begin{conj}[Weak Schanuel's Conjecture \cite{Macintyre1996a}, Section 5]
\added{    There is a computable function $q$ that, given a Khovanskii system $H$ over $\Q$ and an $\texp$-polynomial $g$ over $\Q$, both in $n$ variables, yields a rational number $q = q(n,H,g) >0$, such that for any solution $P \in \R^n$ of $H$, either $g(P) = 0$, or $|g(P)|>q$.} 
\end{conj}

\added{We abbreviate Weak Schanuel's Conjecture by WSC. Note that we state it for the restricted exponential function while in \cite{Macintyre1996a} it is stated for the total exponential, but the statements are easily seen to be equivalent since we are working in the reals.}         

\begin{defn}\label{def:Twsc}
    \added{Assume WSC and let $q$ be a computable function as given by the conjecture. Let $\Twsc$ denote an axiom scheme which says that for every $H,g$ and $n$ as in the conjecture and every solution $P$ of $H$, either $g(P)=0$ or $|g(P)|>q(n,H,g)$.}
\end{defn}

\added{Assuming WSC, we can relax the transcendence assumption in Lemma \ref{lem:unique} as follows.}

\begin{lem}\label{lem:uniquewsc}
    \added{Assume WSC. Let $A = \Fin(N)$, and let $(a,b)\in (\rf)^n$ be an $(\ell, n)$-Khovanskii point over $\Q$. If $N\models \Twsc$, then there is a unique $(\alpha,\beta)\in A^n$ with $\res(\alpha,\beta) = (a,b)$ such that $\Q(\alpha, \beta, \texp(\alpha)) \subset A$.} 
\end{lem}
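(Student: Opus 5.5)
The plan is to reduce to Lemma \ref{lem:newton} for existence and to use $\Twsc$ to replace the transcendence hypothesis of Lemma \ref{lem:unique} when proving that the lifted point generates a field inside $A$. Here $A=\Fin(N)$, so $\rf$ is archimedean and, by Proposition \ref{prop:archimedean-residue}, embeds uniquely into $\Rrexp$ as a restricted exponential field. We therefore regard $(a,b)$ as a real $(\ell,n)$-Khovanskii point over $\Q$, satisfying some Khovanskii system $H=(f_1,\dots,f_n)$ over $\Q$.

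For existence, apply Lemma \ref{lem:newton} with $F=\Q\subseteq A$. This gives a unique $(\alpha,\beta)\in A^n$ with $\res(\alpha,\beta)=(a,b)$ solving $H$ in $N$. It remains to check that $\Q(\alpha,\beta,\texp(\alpha))\subset A$. Since the coordinates are finite, this amounts to showing that every nonzero element $p(\alpha,\beta,\texp(\alpha))$, with $p$ a polynomial over $\Q$, is not infinitesimal; its inverse then lies in $A$. Put $g=F_p$, an $\texp$-polynomial over $\Q$ in $n$ variables. There are two cases.

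\emph{Case $g(a,b)=0$ in $\R$.} I want to show $g(\alpha,\beta)=0$. Here WSC is applied in $\R$: every real solution $P$ of $H$ satisfies $g(P)=0$ or $|g(P)|>q(n,H,g)$. This is expressed by an instance of $\Twsc$, which holds in $N$, so the same dichotomy holds for $(\alpha,\beta)$. Because $\res(g(\alpha,\beta))=g(a,b)=0$ (residues commute with the operations, Corollary \ref{cor:induced-texp}), $g(\alpha,\beta)$ is infinitesimal. It therefore cannot exceed the positive rational $q$ in absolute value, and so $g(\alpha,\beta)=0$.

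\emph{Case $g(a,b)\neq0$.} Then $\res(g(\alpha,\beta))\neq 0$, so $g(\alpha,\beta)$ is a unit of $A$.

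Together the two cases show that $p\mapsto p(a,b,\texp(a))$ and $p\mapsto p(\alpha,\beta,\texp(\alpha))$ have the same kernel, and every nonzero value of the second is a unit of $A$. Hence $\Q(\alpha,\beta,\texp(\alpha))\subset A$.

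For uniqueness, argue as at the end of Lemma \ref{lem:unique}. Suppose $(\alpha',\beta')\in A^n$ has residue $(a,b)$ and $\Q(\alpha',\beta',\texp(\alpha'))\subset A$. Each $f_i(\alpha',\beta')$ lies in this field, hence in $A$, and its residue is $f_i(a,b)=0$. A nonzero element of a subfield of $A$ has its inverse in $A$, so it cannot lie in $\mathfrak m_A$; therefore $f_i(\alpha',\beta')=0$. The Jacobian of $H$ at $(\alpha',\beta')$ has nonzero residue, so $(\alpha',\beta')$ solves the same Khovanskii system with the same residue. By the uniqueness in Lemma \ref{lem:newton}, $(\alpha',\beta')=(\alpha,\beta)$.

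The main obstacle is the first case: turning the real statement WSC into a first-order fact about the lifted point in $N$. This requires checking that the instance of $\Twsc$ for the pair $(H,g)$ is exactly a sentence that applies to any solution of $H$ in $N$. It also requires the compatibility of residues with $\texp$ on $(-1,1)$, since the first $\ell$ coordinates of $\alpha$ must stay in $(-1,1)$. That holds because $a\in(-1,1)^\ell$ is a regular solution in $U_{\ell,n}$, and the Newton lift stays near it.
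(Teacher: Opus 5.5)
Your proposal is correct and follows essentially the same route as the paper. The paper also lifts $(a,b)$ by Lemma \ref{lem:newton}, uses $\Twsc$ to rule out any polynomial over $\Q$ taking a non-zero infinitesimal value at $(\alpha,\beta,\texp(\alpha))$, and gets uniqueness as in the last paragraph of Lemma \ref{lem:unique}. Your two-case argument is just an expanded version of that one-line contradiction.
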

\begin{proof}
\added{By Lemma \ref{lem:newton} there is a Khovanskii point $(\alpha,\beta)\in  N$ over $\Q$ whose residue is $(a,b)$. If $\Q(\alpha,\beta,\texp(\alpha)) \not\subseteq A$ there is a polynomial $g$ over $\Q$ such that $g(\alpha, \beta,\texp(\alpha))$ is a non-zero infinitesimal. This contradicts the fact that $N \models \Twsc$. For uniqueness we argue as in the last paragraph of the proof of Lemma \ref{lem:unique}.}
\end{proof}

\section{Completeness}\label{sec:completeness}

Point (1) of the following Lemma will be used in the proof of completeness of $\Tres$. Point (2) will be needed in the next section. 

Given a tuple $t=(t_1,\dots,t_k)$, we write $\texp(t)^{\mathbb{Q}}$ for the infinite set $\{\texp(t_i)^{q}|i=1,\dots,k, \, q \in \Q\}$.

\begin{lem}\label{lem:eliminate-dependences}
Let $N$ be a restricted exponential field, not necessarily definably complete. 
\begin{enumerate}
    \item Let $(a,b)\in N^n$ be an $(\ell,n)$-Khovanskii point over $\Q$. Suppose that $a_\ell$ is in the $\Q$-linear span of $a_1,\dots,a_{\ell-1}$. Then there is a non-zero $d \in \Z$ such that, letting $a'=(a_1/d, \ldots, a_{\ell -1}/d)$,  we have that $(a',b)$ is an $(\ell-1, n-1)$-Khovanskii point over $\Q$. \label{item:el-dep-1}
    \item Let $t = (t_1, \ldots, t_k)$ be a finite tuple from $N$ (possibly empty), $M$ a restricted exponential subfield of $N$, and $(a,b) \in N^n$ an $(\ell,n)$-Khovanskii point over $M(t,\texp(t)^\Q)$. Suppose that $a_\ell$ is contained in the $\Q$-linear span of $a_1, \ldots, a_{\ell-1}$ modulo $\spn_\Q(M \cup t)$. Then there is a non-zero $d\in \Z$ such that, letting $a' = (a_1/d, \ldots, a_{\ell -1}/d)$,  we have that $(a',b)$ is an $(\ell-1, n-1)$-Khovanskii point over $M(t,\texp(t)^{\Q})$. \label{item:el-dep-2}    \end{enumerate}
\end{lem}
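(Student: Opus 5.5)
We prove (\ref{item:el-dep-2}); part (\ref{item:el-dep-1}) is the special case $M=\Q$ with $t$ empty, after noting that the restricted exponential subfield generated by $\Q$ is harmless. The plan is to rewrite the given Khovanskii system in the new variables $x'=(x_1/d,\ldots,x_{\ell-1}/d)$. In these variables $\texp(x_\ell)$ becomes a monomial in the $\texp(x'_i)$ with coefficients in $M(t,\texp(t)^\Q)$. This removes the variable $x_\ell$ while keeping the Jacobian invertible.

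\textbf{Step 1: the linear relation.} By hypothesis we can write $a_\ell = \sum_{i<\ell} r_i a_i + c$ with $r_i\in\Q$ and $c = m + \sum_j s_j t_j$, where $m\in M$ and $s_j\in\Q$. Clearing denominators, we choose a nonzero $d\in\Z$ such that $d r_i=:k_i\in\Z$ for all $i$. We also want $|a_i/d|<1$ (automatic since $|a_i|<1$) and the $t_j$-coefficients to be absorbed into $\texp(t)^\Q$. We then have $a_\ell = \sum_i k_i a'_i + c$ with $a'_i=a_i/d$.

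\textbf{Step 2: the functional equation.} We would like to assert that
\[
\texp(a_\ell)=\prod_i \texp(a'_i)^{k_i}\cdot \Exp(c)
\]
via Proposition~\ref{prop:exp-finite}. The subtlety is that $c$ (and the $t_j$) need not lie in $(-1,1)$, and the partial sums of $\sum_i k_i a'_i$ may leave $(-1,1)$, so the relation $\texp(x+y)=\texp(x)\texp(y)$ is not available directly. I expect this to be the main obstacle. First I would use the fact that $a_\ell$ and all $a'_i$ lie in $(-1,1)$, so $c = a_\ell-\sum_i k_ia'_i$ is finite. Next, since $M$ is a restricted exponential subfield, $\Exp(m)\in M$ when $m$ is finite, and $\Exp(s_jt_j)\in\texp(t)^\Q$ is formally what the field $M(t,\texp(t)^\Q)$ provides. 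Handling the case where $m$ or the $t_j$ are not finite would need extra care. One possible route is to regroup $c$ as $a_\ell-\sum_i k_ia'_i$ and to enlarge $d$ so that every relevant quantity is a bounded integer multiple of elements of $(-1,1)$. If this can be arranged, $\Exp(c)$ equals a fixed constant $\gamma\in M(t,\texp(t)^\Q)^\times$, and the displayed identity holds in the partial exponential field $(N,\Exp)$. Moreover, since $\texp(a_i)=\texp(a'_i)^d$ by Proposition~\ref{prop:exp-finite}, the identity is an honest polynomial identity in $\texp(a'_i)^{\pm1}$.

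\textbf{Step 3: new system and regularity.} Let $f_1,\ldots,f_n$ be the original $(\ell,n)$-$\texp$-polynomials. In each $f_j$, substitute $x_i = d x'_i$ for $i<\ell$ and $\texp(x_i)=\texp(x'_i)^d$. Replace $\texp(x_\ell)$ by $\gamma\prod_i\texp(x'_i)^{k_i}$, and replace $x_\ell$ itself by $\sum_i k_i x'_i + c$. Multiply by a power of $\prod_i\texp(x'_i)$ to clear negative exponents; this is harmless because $\texp$ does not vanish on $(-1,1)$. The result is a family of $n$ functions in the $n-1$ variables $(x',y)$, so $(a',b)$ is a common zero, but we need $n-1$ equations with an invertible Jacobian. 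The map $(x',y)\mapsto (dx', \sum_i k_ix'_i+c, y)$ is an affine embedding $\iota$ of $N^{n-1}$ into $N^n$ whose image is the affine hyperplane $H$ defined by $x_\ell-\sum_i r_ix_i=c$. On $H$, the relation $\texp(x_\ell)=\gamma\prod_i\texp(x_i/d)^{k_i}$ holds identically. Hence the new functions are the compositions $f_j\circ\iota$, and their Jacobian at $(a',b)$ equals $J(f)(a,b)\cdot D\iota$, which has rank $n-1$ because $J(f)(a,b)$ is invertible. It follows that some $n-1$ of the compositions $f_j\circ\iota$ have an invertible $(n-1)\times(n-1)$ Jacobian at $(a',b)$. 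These $n-1$ functions form an $(\ell-1,n-1)$-Khovanskii system over $M(t,\texp(t)^\Q)$ satisfied by $(a',b)$.
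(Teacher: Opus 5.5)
Your Steps 1 and 3 coincide with the paper's proof: the affine substitution $x_i=dX_i$, $x_\ell=\sum_i k_iX_i+c$, rewriting $\texp(x_\ell)$ as a constant times a monomial in the $\texp(X_i)$, clearing denominators, and dropping one equation to get a non-zero $(n-1)\times(n-1)$ minor.

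The gap is in Step 2, which you leave conditional. You never show that $\gamma=\Exp(c)$ lies in $M(t,\texp(t)^\Q)$, and the remedy you sketch cannot work. Regrouping $c=a_\ell-\sum_i k_ia'_i$ only shows that $c$ is finite, so that $\Exp(c)$ is defined. It says nothing about which field $\Exp(c)$ belongs to, and changing $d$ does not touch $m$ or the $t_j$. In fact, if some $t_j$ may lie outside $(-1,1)$, the statement is false; there $\texp(t_j)=0$, so $\texp(t)^\Q$ carries no information. Take $M\preceq N\models\Tres$ with an infinite $m_0\in M$, pick $s\in(-1,1)\setminus M$, and put $t_1=s-m_0$. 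Then $(s,\texp(s))$ is a $(1,2)$-Khovanskii point over $M(t_1)$ via $x_1-m_0-t_1=0$, $u-\texp(x_1)=0$, and $s\in\spn_\Q(M\cup t)$. But $\texp(s)$ is transcendental over $M(t_1)=M(s)$ by Theorem~\ref{thm:kirby}, because $M$ is $\ecl$-closed (Corollary~\ref{cor:definable-closure}).

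The missing observation is that the lemma is meant with $t\subset(-1,1)$. This is implicit in the notation $\texp(t)^\Q$, is used explicitly in the paper's proof, and holds in the application in Theorem~\ref{thm:embedding}. Under it your obstacle disappears:
\begin{itemize}
\item $m=c-\sum_j s_jt_j$ is a difference of finite elements, hence finite.
\item So $\Exp(m)=\texp(m/p)^p\in M$ for a suitable $p\in\N$, because $M$ is closed under $\texp$.
\item Then $\Exp(c)=\Exp(m)\prod_j\texp(t_j)^{s_j}\in M(t,\texp(t)^\Q)$, by the homomorphism property of $\Exp$ on finite elements (Proposition~\ref{prop:exp-finite}, whose proof uses only the functional equation, not definable completeness).
\end{itemize}

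A smaller point: part (1) is not literally the case $M=\Q$ of part (2), since $\Q$ is not a restricted exponential subfield. Applying (2) to the restricted exponential subfield generated by $\Q$ would only give a Khovanskii point over that larger field. Instead, observe that in (1) you have $c=0$, so $\gamma=1$ and all coefficients stay in $\Q$.
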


\begin{pf}
Since (2) is more complicated, we prove (\ref{item:el-dep-2}) and we indicate how to modify the proof to obtain (1). 

By the assumptions we can write 
$$da_\ell = \sum_{j\leq k} r_j t_j +  \sum_{i<\ell} k_i a_i + g$$
for some $r_j, d,k_i\in \Z$ with $d\neq 0$ and some $g\in M$.  Since $t_1, \ldots, t_k, a_1, \ldots, a_\ell \in (-1,1)$, there is $m\in \N$ such that $g/dm\in (-1,1)$, so we can write $e^\frac{g}{d}$ for the element $\texp(g/dm)^m \in M$.  
By the assumptions $(a,b)$ is a regular zero of a system of $\texp$-polynomials $F_{p_1}(\bar x, \bar u), \ldots, F_{p_n}(\bar x, \bar u)$ over $M(t,\texp(t)^{\Q})$ in the variables $\bar x = (x_1, \ldots, x_\ell)$ and $\bar u = (u_1, \ldots, u_{n-\ell})$.  Let $\bar X = (X_1, \ldots, X_{\ell-1})$ be a new list of variables and consider the substitution
$x_i = dX_i$ for $i<\ell$ and 
$$x_\ell = \sum_{j\leq k} r_j \tfrac {t_j} d + \sum_{i<\ell} k_iX_i + \tfrac g d.$$
The differential of the map 
$$L(\bar X, \bar u) = (\bar x, \bar u)$$
has rank $n-1$ and $L(a',b)=(a,b)$, so the differential of $(F_{p_1}, \ldots, F_{p_n}) \circ L$ has rank $n-1$ at $(a',b)$. 
 
Under the substitution $L$,  
$$\texp(x_\ell) = e^{\frac gd} \prod_{j \leq \ell} \texp(t_j)^{r_j/d}\prod_{i<\ell}\texp(X_i)^{k_i}$$
and $\texp(x_i) = \texp(X_i)^{d}$ for $i<\ell$. Moreover, $e^{\frac{g}{d}} \in M$ (because $M$ is closed under $\texp$), so choosing $n_0\in \N$ sufficiently large one can find $\texp$-polynomials $f_i(\bar X, \bar u)$ over $M 
(t, \texp(t)^\Q)$ such that 
$$(F_{p_i} \circ L) (\bar X, \bar u) = \frac{f_i(\bar X, \bar u)}{\texp(t)^{n_0} \texp(X)^{n_0}}.$$ 
Since $\texp(t)$ and $\texp(X)$ are never zero, the rank of the differential of $(f_1, \ldots, f_n)$ at $(a',b)$ coincides with the rank of $(F_{p_1}, \ldots, F_{p_n})\circ L$ at $(a',b)$ and it is therefore equal to $n-1$. By omitting a suitable $f_i$ we obtain a non-zero minor of rank $n-1$, hence a system of $\texp$-polynomial witnessing the fact that $(a',b)$ is a $(\ell-1,n-1)$-Khovanskii point over $M 
(t, \texp(t)^\Q)$. This completes the proof of (2).

The proof of (1) follows the same strategy, but it is simpler: we write $da_\ell=\sum_{i <\ell} k_ia_i$ and obtain $\texp$-polynomial $F_{p_1},\dots,F_{p_n}$ over $\Q$. The rest of the modifications is clear.
\end{pf}

\begin{thm}\label{thm:complete-restricted}
Assume $\SC$. Then $\Tres$ is complete.  
\end{thm}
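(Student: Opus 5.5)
Since $\Tres$ is model complete (Theorem \ref{thm:model-complete}), it suffices to exhibit a single structure $P$ that embeds into every model of $\Tres$ and is itself a model of $\Tres$; equivalently, to show that the prime model $P := \ecl^{\R_{\texp}}(\emptyset)$ of $T_{\texp}$ (the definable closure of $\emptyset$ in $\Rrexp$, which is an elementary substructure of $\Rrexp$ and hence a model of $\Tres$, using Corollary \ref{cor:definable-closure}) embeds into every $N\models\Tres$. Indeed, if $P\subseteq N$ with both models of $\Tres$, then model completeness gives $P\preceq N$, so $N\equiv P\equiv \Rrexp$, and $\Tres$ is complete (equal to $T_{\texp}$).

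To build the embedding, let $N\models \Tres$, take $A=\Fin(N)$, and compose with the embedding $\iota$ of $\rf$ into $\Rrexp$ from Proposition \ref{prop:archimedean-residue}. I would show that $P\subseteq \iota(\rf)$ and that $P$ lifts into $N$. For the first point: every element of $P$ is a coordinate of a Khovanskii point $(a,b)$ over $\Q$ in $\Rrexp$; the corresponding system over $\Q$ has a regular solution in $\R$, and I would transfer it to $N$ by noting that, by Proposition \ref{prop:al-implies-complexity}-style arguments, solvability near a rational box is expressible and holds in $N$ via Newton's method and o-minimality. More robustly: I will build the embedding directly by an inductive lifting. Enumerate a maximal chain: choose $(a,b)$ a Khovanskii point over $\Q$ in $\R$, and, reducing via Lemma \ref{lem:eliminate-dependences}(1), assume $a_1,\dots,a_\ell$ are $\Q$-linearly independent. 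Then $\SC$ gives $\td(a,b,\texp(a)/\Q)\geq \ell$, while Lemma \ref{lem:exists-variety} gives $\leq \ell$, so equality holds. This is exactly the hypothesis of Lemma \ref{lem:unique} (with $F=\Q$), provided $(a,b)$ lies in $\iota(\rf)$.

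The key step, and main obstacle, is thus showing that every Khovanskii point over $\Q$ of $\Rrexp$ lies in the image of the residue field, i.e. that the real system has a solution in $N$ near it. I would argue: the regular real solution $(a,b)$ is isolated, so for a small rational box $B$ around it, $\Rrexp$ satisfies the sentence ``the system has exactly one zero in $B$, with nonzero Jacobian, and sign conditions of the system on $\partial B$'' — but to transfer to $N$ I need an argument valid in all models of $\Tres$. I would use a degree/Newton argument: approximate $(a,b)$ by rational points $c$ where $|f(c)|$ is tiny and the Jacobian bounded below with rational bounds computed from Taylor estimates (Proposition \ref{prop:Taylor}, Lemma \ref{lem:exp(-1)} pinning $\texp$ on rationals within rational error); these are quantifier-free statements with rational data true in every model, and Newton's method (\cite[Theorem 7]{Servi2008}) then yields a zero in $N$ with standard part $(a,b)$. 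Hence $(a,b)\in\iota(\rf)$.

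Finally, Lemma \ref{lem:unique} lifts each such point uniquely to $(\alpha,\beta)\in A^n$ with $\Q(\alpha,\beta,\texp(\alpha))\subseteq A$; uniqueness makes the liftings coherent (a point lying in two systems, or a tuple extending another, gets compatible lifts), and the ring section in the proof of Lemma \ref{lem:unique} shows the lift preserves field operations and $\texp$. Taking the union over all finite tuples from $P$ (using that $P=\ecl(\emptyset)$ and every finite tuple is contained in a single Khovanskii point) yields an embedding of restricted exponential fields $P\to N$, which concludes the proof.
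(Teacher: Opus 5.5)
Your proof is correct and follows the paper's skeleton: model completeness reduces completeness to embedding $\ecl^{\Rrexp}(\emptyset)$, $\SC$ with Lemma~\ref{lem:exists-variety} gives $\td(a,b,\texp(a)/\Q)=\ell$ after the reduction of Lemma~\ref{lem:eliminate-dependences}(1), Lemma~\ref{lem:unique} with $F=\Q$ does the lifting, and one takes the union over the directed family. It differs in one step.

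\textbf{The paper's route.} The paper assumes, without loss of generality, that $N$ is $\aleph_0$-saturated. Then $\st:\Fin(N)\to\R$ is surjective and $\Fin(N)/\mu$ \emph{is} $\Rrexp$. So every real Khovanskii point over $\Q$ is trivially a residue, and Lemma~\ref{lem:unique} applies at once.

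\textbf{Your route.} You work with an arbitrary $N$ and prove directly, by a Newton/Kantorovich argument with rational data, that every real Khovanskii point over $\Q$ is the standard part of a genuine zero in $N$. Hence it lies in $\iota(\Fin(N)/\mu)$. This gives the embedding of $\ecl^{\Rrexp}(\emptyset)$ into every model of $\Tres$ immediately. The paper only obtains that afterwards, in Corollary~\ref{cor:prime-model-embeds} via completeness and $\ecl=\mathrm{dcl}$, or via Krapp's result as noted in Remark~\ref{rem:Krapp}.

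\textbf{What the Newton step needs.} The price is a quantitative argument that you only sketch. To make it precise:
\begin{itemize}
\item The Kantorovich hypotheses include a bound on the second derivatives over a whole rational box. That is a universal statement, not a quantifier-free one as you say. It still holds in every model of $\Tres$ by crude estimates, e.g.\ $0<\texp(x)<3$ on $(-1,1)$, which follows from Lemma~\ref{lem:exp(-1)} and monotonicity.
\item The pointwise data ($|f(c)|$ small, $|\det J(c)|$ bounded below at a rational $c$) do transfer. They are strict inequalities whose standard parts are the corresponding real values, by Proposition~\ref{prop:archimedean-residue}.
\item You need uniqueness of the zero in a fixed small box. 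Then the zeros produced from successively better rational starting points coincide, giving a single zero with standard part $(a,b)$.
\end{itemize}

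Your first tentative idea, invoking Proposition~\ref{prop:al-implies-complexity}, would not apply, since that proposition concerns a pair of models $M\subseteq N$. You were right to replace it with the Newton argument.
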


\begin{pf} Since $\Tres$ is model complete (Theorem \ref{thm:model-complete}), it suffices to show that every $\aleph_0$-saturated model $N$ of $\Tres$ contains a substructure $M \subset N$ that is elementarily equivalent to $\R_{\texp}$. So let $N\models \Tres$ be $\aleph_0$-saturated. Let  $\Fin(N) \subset N$ be the subring of finite elements and let $\mu\subset \Fin(N)$ be its maximal ideal. For every $\alpha \in A$ there is a unique real number $a\in \R$ that determines the same Dedekind cut over $\Q$ and we say that $a := \st(\alpha)$ is the {\em standard part} of $\alpha$. By saturation $\st:\Fin(N) \to \R$ is surjective. 
By Proposition \ref{prop:archimedean-residue}, we can identify $\Fin(N)/\mu$ with $\R_{\texp}$ and the standard part map $\st$ with the residue map $\res:\Fin(N) \to \Fin(N)/\mu$. Let 
$\ecl^{\R_{\texp}}(\emptyset)$ be the $\ecl$-closure of $\emptyset$ in $\R_{\texp}$. In $\R_{\texp}$ the exponential closure coincides with the definable closure (see \cite[Theorem 4.2]{Jones2008} or Corollary \ref{cor:definable-closure}), so $\ecl^{\R_{\texp}}(\emptyset)$ is an elementary substructure of $\R_{\texp}$. Every element of $\ecl^{\R_{\texp}}(\emptyset)$ is a component of an $(\ell,n)$-Khovanskii point $(a,b)\in \R^n$ over $\Q$. Fix such a point $(a,b) \in \R^n$ and suppose that 
the components of $a \in \R^\ell$ are $\Q$-linearly independent. By $\SC$ \added{and Lemma \ref{lem:exists-variety}}, we have $\td(a,b,\texp(a)/\Q) = \ell$. By Lemma \ref{lem:unique} it then follows that there is a unique tuple $(\alpha,\beta) \in A^n$ with (componentwise) residue $(a,b) = \st(\alpha, \beta) \in \R^n$ such that $\Q(\alpha,\beta,\texp(\alpha))\subset A$. The same holds without the assumption that the components of $a$ are $\Q$-linearly independent because we can reduce to that case by a repeated application of Lemma \ref{lem:eliminate-dependences}(\ref{item:el-dep-1}). 
Letting $a,b$ vary, \added{the fields $\Q(\alpha, \beta, \texp(\alpha))$ form an upper directed family because the concatenation of two Khovanskii points is again a Khovanskii point.} The union $P$ of the \added{family} \deleted{fields $\Q(\alpha, \beta, \texp(\alpha))$} is \added{a substructure of $N$} contained in $\Fin(N)$, and it is isomorphic to $\ecl^{\R_{\exp}}(\emptyset)$ under the residue map, so in particular it is elementarily equivalent to $\R_{\texp}$.  
\end{pf}

\begin{cor}\label{cor:prime-model-embeds}
    \added{Assume $\SC$. Then $\ecl^{\Rrexp}(\emptyset)$ embeds elementarily in every model of $\Tres$.} 
\end{cor}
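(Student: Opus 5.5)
The plan is to combine the proof of Theorem \ref{thm:complete-restricted} with model completeness (Theorem \ref{thm:model-complete}). Let $M\models \Tres$ be arbitrary and fix an $\aleph_0$-saturated elementary extension $N\succeq M$. The proof of Theorem \ref{thm:complete-restricted} produces a substructure $P\subseteq \Fin(N)$ of $N$ isomorphic, via the standard part map, to $\ecl^{\Rrexp}(\emptyset)$. Recall that $P$ is the union of the fields $\Q(\alpha,\beta,\texp(\alpha))$, where $(\alpha,\beta)$ is the unique lift (Lemma \ref{lem:unique}, under $\SC$) of a Khovanskii point $(a,b)$ over $\Q$. Since $\ecl^{\Rrexp}(\emptyset)$ is the definable closure of $\emptyset$ in $\Rrexp$, it is an elementary substructure of $\Rrexp$, hence a model of $\Tres$. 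Therefore $P$ is a model of $\Tres$ contained in the model $N$, and model completeness gives $P\preceq N$.

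The remaining task is to move $P$ from $N$ into $M$. First I would check that each lifted point $(\alpha,\beta)$ is a Khovanskii point of $N$ over $\Q$. By the construction in Lemma \ref{lem:unique}, $(\alpha,\beta)$ is a regular zero of a Khovanskii system $F_{h_1},\dots,F_{h_n}$ over $\Q$, and when linear dependences have been eliminated it comes from such a point via Lemma \ref{lem:eliminate-dependences}(\ref{item:el-dep-1}). Consequently $P\subseteq \ecl^N(\emptyset)$, which equals $\operatorname{dcl}^N(\emptyset)$ by Corollary \ref{cor:definable-closure}. Since $M\preceq N$, every $\emptyset$-definable element of $N$ already lies in $M$, and so $P\subseteq M$.

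Finally, $P$ and $M$ are both models of $\Tres$ with $P\subseteq M$, so model completeness again yields $P\preceq M$. Composing the inverse of the residue isomorphism $\ecl^{\Rrexp}(\emptyset)\to P$ with the inclusion $P\subseteq M$ gives the required elementary embedding. The step needing the most care is verifying that the lifts really lie in $\operatorname{dcl}^N(\emptyset)$, i.e.\ that they are genuine Khovanskii points over $\Q$ in $N$ and not merely points defined over some larger field. This follows from the explicit system $F_{h_i}$ in Lemma \ref{lem:unique}; alternatively, one can use that the set of solutions of a Khovanskii system is finite and $\emptyset$-definable, so each of its points is definable over $\emptyset$.
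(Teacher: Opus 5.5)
Your argument is correct, but it takes a different route from the paper.

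\emph{The paper's route.} The proof there is two lines. By Theorem \ref{thm:complete-restricted} (where $\SC$ is used), $M\equiv\Rrexp$. Hence the definable closures of $\emptyset$ in $M$ and in $\Rrexp$ are isomorphic: you match the unique realizations of the same $\emptyset$-definable singletons. By Corollary \ref{cor:definable-closure} these closures are $\ecl^M(\emptyset)$ and $\ecl^{\Rrexp}(\emptyset)$. Elementarity then follows from $\ecl^{\Rrexp}(\emptyset)\preceq\Rrexp$ together with model completeness.

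\emph{Your route.} You do not use completeness as a black box. Instead you re-run the lifting construction from its proof inside an $\aleph_0$-saturated $N\succeq M$; the saturation is exactly what puts the real Khovanskii points into the residue field. Each lift lies in a finite $\emptyset$-definable solution set of a Khovanskii system over $\Q$, so it lies in $\operatorname{dcl}^N(\emptyset)\subseteq M$. You then conclude with model completeness applied to $P\subseteq M$.

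\emph{Comparison.} The paper's argument is shorter and purely formal once completeness and $\ecl=\operatorname{dcl}$ are available. Yours costs more but makes the embedding explicit. It lands in $\Fin(M)$ and is a section of the standard part map, given by the unique lifts of real Khovanskii points. Since $P\preceq M$ and $P\subseteq\operatorname{dcl}^M(\emptyset)$, its image is again $\operatorname{dcl}^M(\emptyset)$, the same copy as in the paper.

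Your intermediate step $P\preceq N$ is harmless but not needed for the conclusion.
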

\begin{proof}
    \added{Let $M\models \Tres$. It suffices to show that $\ecl^M(\emptyset)$ is isomorphic to $\ecl^{\Rrexp}(\emptyset)$. This follows from the completeness of $\Tres$ and the fact that the $\ecl$-closure coincides with the definable closure (Corollary \ref{cor:definable-closure}).} 
\end{proof}

\begin{thm} \label{thm:complete-unrestricted}
 Assume $\SC$. Then   $\Tglob$ is complete. 
\end{thm}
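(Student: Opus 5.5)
The plan is to reduce to Ressayre's axiomatization in Proposition \ref{prop:ressayre}. Given $M\models \Tglob$, we will check conditions (1)--(3) of that proposition for the ordered field $M$ with $E=\exp$; then $M\equiv \R_{\exp}$. Since this holds for every model, $\Tglob$ is complete, because all its models have the same theory $T_{\exp}$.

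Conditions (1) and (2) are exactly Proposition \ref{prop:growth}. For (1) it is worth recording why $\exp$ is a surjective increasing exponential onto $M^{>0}$. Proposition \ref{prop:Fratarcangeli} gives the functional equation, so $\exp(x)=\exp(x/2)^2>0$. Since $\exp'=\exp>0$, the function is increasing. Surjectivity onto $M^{>0}$ follows from definable completeness via the intermediate value property, using $\exp(x)\geq 1+x$ for growth at $+\infty$ and $\exp(-x)=1/\exp(x)$ for the behaviour at $-\infty$.

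The only real step is (3). Define $\texp$ on $M$ by $\texp(x)=\exp(x)$ for $x\in(-1,1)$ and $\texp(x)=0$ otherwise. This function is definable in $(M,\exp)$, so every $(M,\texp)$-definable set is $(M,\exp)$-definable. Definable completeness of $(M,\exp)$ therefore passes to the reduct $(M,\texp)$. The axioms $\texp(0)=1$ and $\texp'=\texp$ on $(-1,1)$ are inherited directly, so $(M,\texp)\models\Tres$. By Theorem \ref{thm:complete-restricted}, which uses $\SC$, the theory $\Tres$ is complete. Since $\R_{\texp}\models \Tres$, we get $(M,\texp)\equiv\R_{\texp}$, which is condition (3).

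I expect no serious obstacle. The one point needing care is the reduct observation: definable completeness is a scheme quantifying over definable sets, so it transfers to reducts. Everything else is a direct citation of Propositions \ref{prop:ressayre}, \ref{prop:growth} and Theorem \ref{thm:complete-restricted}.
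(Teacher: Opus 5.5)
Your proposal is correct and follows the same route as the paper. The paper also deduces completeness of $\Tglob$ from the completeness of $\Tres$ under $\SC$ (Theorem~\ref{thm:complete-restricted}) combined with the criterion of Proposition~\ref{prop:ressayre}, with conditions (1) and (2) supplied by Proposition~\ref{prop:growth}. Your explicit verification that restricting $\exp$ yields a model of $\Tres$, with definable completeness passing to the definable reduct, spells out a step the paper leaves implicit.
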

\begin{proof}
    \replaced{By}{This follows from} the completeness of $\Tres$ and Ressayre's theorem (Theorem \ref{prop:ressayre}). 
\end{proof}

\begin{rem}\mbox{} \label{rem:Krapp}
\begin{enumerate}
\item \added{The proof of Theorem \ref{thm:complete-restricted} can be modified, replacing the application of Lemma \ref{lem:unique} with one of Lemma \ref{lem:uniquewsc}, to show that WSC implies that $\Tres+\Twsc$ is complete. As in Theorem \ref{thm:complete-restricted} completeness can also be obtained for $\Tglob+\Twsc$.} \label{rem:WSC}
\item In an attempt to prove Theorem \ref{thm:complete-unrestricted}, Krapp proved, assuming $\SC$, that the prime  model of $T_{\exp}$ can be embedded into any  o-minimal exponential field \cite{Krapp2023}, but the embedding could not be proved to be elementary. 
\item The saturation hypothesis in the proof of Theorem \ref{thm:complete-restricted} was introduced to ensure that $\Fin(M)/\mu$ is a model of $T_{\exp}$, but this also follows without saturation by \cite[Theorem 3.3]{Krapp2019}. For the sake of the theorem, the saturation assumption can be assumed without loss of generality. 
\end{enumerate}

\end{rem}

\section{Embedding the residue restricted exponential field}\label{sec:embedding}

In this section we prove an embedding result for the residue field of a convex subring of a model of $\Tres$, similar to the one implicitly used in the proof of Theorem \ref{thm:complete-restricted}. The theorem is reminiscent of the results in \cite{DriesLewenberg1995}, but the assumptions are weaker. In particular, we do not assume that the convex subring $A\subset N$ is $T$-convex in the sense of that paper.

\begin{thm}\label{thm:embedding}
\replaced{Let $N \models \Tres$ and let $A \subseteq N$ be a convex subring with maximal ideal $\mathfrak{m}_A$. Assume that $A$ contains an elementary substructure $C$ of $N$. Then there is a restricted exponential subfield $R_A$ of $N$ containing $C$ such that $A=R_A+\mathfrak{m}_A$. }{
Let $N \models \Tres$ and let $A \subseteq N$ be a convex subring with maximal ideal $\mathfrak{m}_A$. Assume that $A$ contains a substructure $C$ of $N$ whose image under the residue map $\res:A\to A/\mathfrak{m}_A$ is $\ecl$-closed in $\rf$. Then there is a restricted exponential subfield $R_A$ of $N$ containing $C$ such that $A=R_A+\mathfrak{m}_A$. 
    
    In particular, if $\SC$ holds or if $A$ contains an elementary substructure of $N$, the residue field $A/\mathfrak{m}_A$ can be embedded in $N$ through a section of the residue map that preserves $\texp$. 
}
\end{thm}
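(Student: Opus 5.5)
We plan to construct $R_A$ by a maximality argument, lifting the residue field step by step along the pregeometry $\ecl$ of $\rf$. By Zorn's lemma, take a maximal restricted exponential subfield $R\subseteq A$ of $N$ containing $C$; the restriction of $\res$ to $R$ is injective since $R\subseteq A$ is a field. We will show $\res(R)=\rf$, which gives $A=R+\mathfrak m_A$. First we observe that $\res(R)$ is $\ecl$-closed in $\rf$; from this we deduce that if $\res(R)\neq \rf$ then a transcendental element can be adjoined, contradicting maximality.

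\emph{Step 1: $\res(C)$ is $\ecl$-closed in $\rf$.} Let $(a,b)$ be an $(\ell,n)$-Khovanskii point over $\res(C)$ in $\rf$. By Lemma~\ref{lem:newton} it lifts to a Khovanskii point $(\alpha,\beta)\in A^n$ over $C$ in $N$. Since $C\preceq N$, model completeness (Theorem~\ref{thm:model-complete}) and the finiteness of the zero set of a Khovanskii system (Proposition~\ref{prop:dimension}) give $(\alpha,\beta)\in C^n$, hence $(a,b)\in\res(C)^n$. The same argument shows more generally: if $R\supseteq C$ is a restricted exponential subfield of $A$ with $R=\operatorname{dcl}(R)$, then $\res(R)$ is $\ecl$-closed. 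Since the definable closure of a subset of $A$ need not lie in $A$, we will instead enlarge $R$ through Khovanskii points, as follows.

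\emph{Step 2: closing under Khovanskii points.} Suppose $R\subseteq A$ is a restricted exponential subfield containing $C$, and let $(a,b)$ be a Khovanskii point over $\res(R)$. We want to lift it into $A$ so that $R(\alpha,\beta,\texp(\alpha))\subseteq A$, which lets us enlarge $R$. First, by Lemma~\ref{lem:eliminate-dependences}(2), we reduce to the case where $a$ is $\Q$-linearly independent modulo $\res(R)$. Then Theorem~\ref{thm:kirby}, applied over the $\ecl$-closure of $\res(R)$ (using Step~1 and transitivity), together with Lemma~\ref{lem:exists-variety}, should give $\td(a,b,\texp(a)/\res(R))=\ell$. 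Lemma~\ref{lem:unique} then produces the desired lift. The lifted fields form a directed family, since concatenations of Khovanskii points are Khovanskii points. Moreover, the field generated over $R$ is closed under $\texp$ on $(-1,1)$ once we also adjoin $\texp$ of each new element: an element $x$ of the new field is definable, and adjoining the coordinate $x$ with its exponential yields again a Khovanskii point whose residue lies in $\ecl(\res R)$. Hence maximality forces $\res(R)$ to be $\ecl$-closed.

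\emph{Step 3: adjoining an $\ecl$-independent element.} If $c\in\rf\setminus\res(R)$, then $c$ is $\ecl$-independent over $\res(R)$. Choose any lift $\gamma\in A$ and $Q$-independent lifts of $\Q$-multiples, and form $R(\gamma,\texp(\gamma/k)\colon k)$; then close as in Step~2 over $R(\gamma)$. The point to verify is that $R(\gamma,\texp(\gamma/k):k\in\N)\subseteq A$ with residue map injective, i.e.\ no nonzero polynomial expression lands in $\mathfrak m_A$. This follows because the residues $c,\texp(c/k)$ are algebraically independent over $\res(R)$ modulo the multiplicative relations, by the predimension inequality with $\etd(c/\res R)=1$. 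The main obstacle is exactly this step together with the transcendence-degree bookkeeping in Step~2 when the base field is not $\ecl$-closed. We expect the cleanest route to be to always work over an $\ecl$-closed residue base, which maximality supplies. Combined with Step~1 and Zorn, we obtain $\res(R_A)=\rf$.
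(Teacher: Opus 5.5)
\textbf{There is a genuine gap in Step~2, and your Zorn argument depends on it.}

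To invoke Lemma~\ref{lem:unique} you need $\td(a,b,\texp(a)/\res(R))=\ell$. You propose to get this from Theorem~\ref{thm:kirby} applied over $\ecl(\res(R))$. But $a$ is a coordinate of a Khovanskii point over $\res(R)$, so $a\subset\ecl(\res(R))$. Over that base, $\td$, $\ldim_\Q$ and $\etd$ of $a$ are all zero, and the inequality says nothing. Over $\res(R)$ itself, Theorem~\ref{thm:kirby} does not apply. What you would actually need is that $\res(R)$ be self-sufficient in $\rf$, i.e.\ $\td(x,\texp(x)/\res(R))\geq\ldim_\Q(x/\res(R))$ for all $x$, and nothing guarantees this for an arbitrary maximal $R$.

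The conclusion of Step~2 is itself doubtful. Suppose $R\subseteq R'\subseteq A$ with $R'$ a restricted exponential field and $a\subset\res(R')$. Since $\res$ is injective on $R'$ and compatible with $\texp$ there, the preimage of $(a,b)$ in $R'$ satisfies every Khovanskii system over $R$ that $(a,b)$ satisfies over $\res(R)$. By Lemma~\ref{lem:newton} it must therefore be the Newton lift for each of those systems. When the transcendence degree drops, these lifts need not coincide, and then no extension of $R$ inside $A$ reaches $a$. So maximality among all restricted exponential subfields of $A$ containing $C$ does not force $\res(R)$ to be $\ecl$-closed. Your closing remark that ``maximality supplies an $\ecl$-closed base'' is circular.

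\textbf{The missing idea} is to compute predimensions only over the $\ecl$-closed base $\res(C)$ from Step~1, keeping the $\ecl$-independent generators inside the tuple being measured. This is what the paper does, and it needs no maximality argument:
\begin{enumerate}
\item Fix an $\ecl$-basis $\mathcal B$ of $\rf$ over $\res(C)$, with elements in $(-1,1)$, and lift a finite tuple $t\subset\mathcal B$ arbitrarily to $\tau$.
\item Apply Theorem~\ref{thm:kirby} over $\res(C)$ to $t$. This gives $\td(t,\texp(t)/\res(C))=2k$, hence $C(\tau,\texp(\tau)^{\Q})\subseteq A$.
\item Apply it again to $(a,t)$, with $a$ linearly independent modulo $\spn_\Q(\res(C)\cup t)$. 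This gives $\td(a,\texp(a)/\res(C)(t,\texp(t)^{\Q}))=\ell$.
\item Lemma~\ref{lem:unique} over $F=C(\tau,\texp(\tau)^{\Q})$ then produces the lifts, and $R_A$ is their directed union.
\end{enumerate}

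Your Step~3 is exactly this computation with $k=1$ and base $\res(R)$, so your approach can be repaired. Run Zorn only over those $R$ with $\res(R)$ $\ecl$-closed. This condition is preserved under unions of chains by finite character, and $C$ satisfies it by Step~1. Then, for any such $R$ with $\res(R)\neq\rf$, use Step~3 together with the paper's lifting of Khovanskii points over $\res(R)(c,\texp(c)^{\Q})$. This extends $R$ to a field whose residue field is $\ecl(\res(R)\cup\{c\})$, which is again $\ecl$-closed, so the extension stays in the poset.
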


\begin{pf} 
\deleted{First notice that if $\SC$ holds, then one can take for $C$ a copy of the prime model of $T_{\exp}$ and embed it in $N$ as in the proof of Theorem \ref{thm:complete-restricted}. Moreover, it is not difficult to see that the residue of an elementary substructure contained in $A$ is $\ecl$-closed. It is then sufficient to prove the first part of the statement.}
\added{By model completeness $C$ is $\ecl$-closed in $N$ (Theorem \ref{thm:model-complete}). By Lemma \ref{lem:newton}, it follows that $\rs{C} = \res(C)$ is $\ecl$-closed in $A/\mathfrak{m}_A$.}
By Proposition \ref{prop:pregeometry} the exponential closure defines a pregeometry on $A/\mathfrak{m}_A$, so we can fix a $\ecl$-basis $\mathcal B$ of $\rf$ over $\rs C = \res(C)$. We assume without loss of generality that each element of the basis is in $(-1,1)$ (we can reduce to this case taking the reciprocal of the elements not in $(-1,1)$). Now lift $\mathcal B$ to a family $\widehat {\mathcal B}$ of elements of $A$ that is mapped bijectively to $\mathcal B$ via the residue map. For each $c \in \rf$ there is a finite tuple $t = (t_1, \ldots, t_k)$ from $\mathcal B$ such that $c \in \ecl(\rs Ct)$. This implies that there are $\ell,n\in \N$ and an $(\ell, n)$-Khovanskii point $(a,b)\in (\rf)^n$ over $\rs C (t)$ such that $c$ is a component of $(a,b)$. 

In particular this shows that $\rf$ is a union of subfields of the form 
$$\rs C_{t,a,b} = \rs C(t, a, b, \texp(t)^{\Q}, \texp(a)) \subset \rf$$
where:
\begin{itemize}
    \item $t = (t_1, \ldots, t_k)$ is a $k$-tuple from $\mathcal B$ and $t \subset (-1,1)$.
    \item $(a,b)$ is an $(\ell,n)$-Khovanskii point over $\rs C (t,\texp(t)^{\Q})$.
    \item $\texp(a) = (\texp(a_1), \ldots, \texp(a_\ell))$ where $a_i$ is the $i$-th component of $a$. 
    \item $\texp(t)^{\Q}$ is the set of all elements of $\rf$ of the form $\texp(t_i)^{q}$ where $t_i\in \mathcal B$ is a component of $t$ and $q\in \Q$.
    \item $k,\ell,n \in \N$. 
\end{itemize}

Now fix $t,a,b$ as above and let
$\tau = (\tau_1, \ldots, \tau_k)$ be the unique $k$-tuple in $\widehat {\mathcal B}$ with $\res(\tau) = t$. 
We will prove that there is a unique $n$-tuple $(\alpha, \beta) \in A^n$ with $\res(\alpha,\beta) = (a,b)$ such that  
$$C_{\tau,\alpha,\beta} = C(\tau, \alpha, \beta, \texp(\tau)^{\Q}, \texp(\alpha)) \subset A$$
where the operations are performed in $N$. 

\begin{claim} \label{claim:td2k}
  $\td(t,\texp(t)/\rs C) = 2k$.   
\end{claim}
\begin{pf}
Since $\rs C$ is $\ecl$-closed, by the predimension inequality (Theorem \ref{thm:kirby}), we have $\td(t,\texp(t)/\rs C)\geq \ldim_{\mathbb{Q}}(t/\rs C)+\etd(t/\rs C)=2k$. The other inequality is clear since $(t,\texp(t))$ is a tuple of length $2k$. 
\end{pf}

\begin{claim} \label{claim:lift2k}
$C(\tau, \texp(\tau)) \subseteq A$.
\end{claim}
\begin{pf}
By Claim \ref{claim:td2k}, the components of $(t, \texp(t))$ are algebraically independent over $\rs C$. Thus there is a ring homomorphism $s:\rs C [t,\texp(t)] \to C[\tau, \texp(\tau)]$ which is a partial section of the residue map, hence it is an isomorphism. 
In particular $C[\tau,\texp(\tau)]$ contains no elements of $\mathfrak{m}_A$, so its fraction field is contained in $A$.   
\end{pf}

\begin{claim}\label{claim:generic-Khovanskii}
There is a unique tuple $(\alpha,\beta) \in A^n$ with residue $(a,b) \in (\rf)^n$ such that $C(\alpha,\tau,\beta,\texp(\alpha), \texp(\tau))\subset A$.  
\end{claim}
\begin{proof} 

By a repeated application of Lemma \ref{lem:eliminate-dependences}(\ref{item:el-dep-2}), we can assume that the components of $a \in (\rf)^\ell$ are $\Q$-linearly independent modulo $\spn_\Q(\rs C \cup t)$. (This also includes the case when $\ell = 0$ and $a$ is the empty tuple).
 
Granted the linear independence, by the predimension inequality (Theorem \ref{thm:kirby}), 
\begin{align*}
    \td(a,t,\texp(a),\texp( t) /\rs C) & \geq \ldim_{\mathbb{Q}}(a,t/\rs C)+\etd(a,t/\rs C) \\
    & = (\ell+k) + k \\ & = \ell + 2k.
\end{align*}
and since $\td(t, \texp(t)/\rs C) = 2k$ we obtain
$\td(a,\texp(a)/\rs C(t, \texp(t)) \geq \ell$. By Lemma \ref{lem:exists-variety} the other inequality also holds because $(a,b)$ is an $(\ell,n)$-Khovanskii point over $\rs C(t,\texp(t)^\Q)$, so $\td(a,\texp(a)/\rs C (t, \texp(t)^{\Q}) =  \ell$. By Lemma \ref{lem:unique}, applied to $F = C(\tau, \texp(\tau)^\Q)$, it follows that $C(\alpha,\tau,\beta,\texp(\alpha), \texp(\tau))\subset A$.  
\end{proof} 

To finish the proof, note that the fields of the form $C_{\tau, \alpha, \beta}$ form an upper directed system with
$$C_{\tau,\alpha,\beta} \cup C_{\tau',\alpha',\beta'} \subseteq C_{\tau \cup \tau',\alpha\alpha',\beta\beta'}.$$
We can thus define $R_A$ as the union of these fields and note that, by construction, $R_A$ is closed under $\texp$. 
\end{pf}

\begin{cor}
    \added{Assume $\SC$. Let $N \models \Tres$ and let $A \subseteq N$ be a convex subring with maximal ideal $\mathfrak{m}_A$. Then there is a restricted exponential subfield $R_A$ of $N$ such that $A=R_A+\mathfrak{m}_A$.}
\end{cor}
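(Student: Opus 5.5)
Under $\SC$, the plan is to produce an elementary substructure $C$ of $N$ lying inside $A$, and then apply Theorem \ref{thm:embedding} directly.

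The natural candidate for $C$ is the copy of the prime model $\ecl^{\Rrexp}(\emptyset)$ inside $N$. By Corollary \ref{cor:prime-model-embeds}, $\ecl^{\Rrexp}(\emptyset)$ embeds elementarily in $N$. Moreover, by Corollary \ref{cor:definable-closure}, its image is exactly $\ecl^N(\emptyset)$, the definable closure of $\emptyset$ in $N$. So we take
\[ C=\ecl^N(\emptyset). \]
Since $C$ is the image of an elementary embedding, it is an elementary substructure of $N$.

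It remains to check that $C\subseteq A$. Every convex subring of $N$ contains $\Z$, and by convexity it contains every element bounded in absolute value by an integer. Hence $\Fin(N)\subseteq A$, and it suffices to show $C\subseteq \Fin(N)$. This follows because $C$ is isomorphic to $\ecl^{\Rrexp}(\emptyset)\subseteq \R$, which is archimedean. Concretely, for each $c\in C$ the real number it corresponds to is bounded by some integer $n$. The sentence asserting $|c|<n$ is first-order over $\emptyset$, since $c$ is $\emptyset$-definable, and it transfers to $N$ by elementarity, giving $c\in \Fin(N)$.

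Theorem \ref{thm:embedding} now gives a restricted exponential subfield $R_A\supseteq C$ with $A=R_A+\mathfrak m_A$. The only point needing care is that the elementary embedding of the prime model really lands in $\Fin(N)$, and the definability argument above handles this. I expect no serious obstacle.
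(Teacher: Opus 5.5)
Your proposal is correct and follows the paper's proof. The paper likewise uses Corollary \ref{cor:prime-model-embeds} to obtain an archimedean elementary substructure of $N$, notes that it lies in every convex subring (via $\Fin(N)\subseteq A$), and applies Theorem \ref{thm:embedding}; your definability transfer for $C\subseteq \Fin(N)$ is valid, though it also follows directly because an ordered-field embedding of an archimedean field lands in $\Fin(N)$.
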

\begin{proof}
\added{By Corollary \ref{cor:prime-model-embeds}, if $\SC$ holds then $N$ contains an archimedean elementary substructure, which is then contained in every convex subring of $N$. It follows that the assumptions of Theorem \ref{thm:embedding} are satisfied.}
\end{proof}

\begin{rem}
\added{If $\SC$ holds, then every model of $\Tres$ contains a copy of the prime model of $T_{\exp}$ as shown in the proof of Theorem \ref{thm:complete-restricted}}
\end{rem}

\bibliographystyle{alpha}

\listofchanges

\end{document}